\newtheorem{theorem}{Theorem}
\newtheorem{example}{Example}
\newtheorem{lemma}{Lemma}
\newtheorem{remark}{Remark}
\newenvironment{proof}[1][Proof]{\noindent\textbf{#1.} }{\ \rule{0.5em}{0.5em}}
\begin{document}

\title{Bifurcation Curve Diagrams for a Diffusive Generalized Logistic
Problem with Minkowski Curvature Operator and Constant-Yield Harvesting%
\thanks{%
2020 Mathematics Subject Classification: 34B15, 34B18, 34C23, 74G35.}\thanks{%
Keywords: positive solution, Minkowski curvature problem, bifurcation curve}}
\author{Shao-Yuan Huang\thanks{%
Department of Mathematics and Information Education, National Taipei
University of Education, Taipei 106, Taiwan. \ E-mail address:\textit{\ }%
syhuang@mail.ntue.edu.tw}}
\date{}
\maketitle

\begin{abstract}
This paper investigates the bifurcation diagrams of positive solutions for a
one-dimensional diffusive generalized logistic boundary-value problem with
Minkowski curvature operator and constant yield harvesting
\begin{equation*}
\left\{
\begin{array}{l}
-\left( u^{\prime }/\sqrt{1-{u^{\prime }}^{2}}\right) ^{\prime }=\lambda
g(u)-\mu ,\text{ \ in }\left( -L,L\right) , \\
u(-L)=u(L)=0,%
\end{array}%
\right.
\end{equation*}%
where $\lambda ,L,\mu >0$, $g(0)=0$ and there exists $\sigma >0$ such that $%
g\in C[0,\sigma ]\cap C^{2}(0,\sigma )$, $\left( \sigma -u\right) g(u)>0$
for $u\neq \sigma $, and $g^{\prime \prime }(u)<0$ on $\left( 0,\sigma
\right) $. We prove that the corresponding bifurcation curves on both the $%
(\lambda ,\left\Vert u\right\Vert _{\infty })$-plane and $(\mu ,\left\Vert
u\right\Vert _{\infty })$-plane are $\subset $-shaped. Furthermore, by
characterizing the bifurcation set on the $(\mu ,\lambda )$-plane, we
determine the exact multiplicity of positive solutions. As an application to
population and ecological models, we further consider the nonlinearity $g(u)=u^{p}\left( 1-\left( u/K\right) ^{q}\right) ^{r}$, where $p,q,r,K>0$ satisfy conditions such that $g$ is concave on $\left(0,K\right)$.
\end{abstract}

\section{Introduction}

In this paper, we study the bifurcation diagrams for a one-dimensional
diffusive generalized logistic problem with the Minkowski curvature operator
and constant-yield harvesting:%
\begin{equation}
\left\{
\begin{array}{l}
-\left( u^{\prime }/\sqrt{1-{u^{\prime }}^{2}}\right) ^{\prime }=\lambda
g(u)-\mu ,\text{ \ in }\left( -L,L\right) , \\
u(-L)=u(L)=0,%
\end{array}%
\right.  \label{eq1}
\end{equation}%
where $\lambda ,L,\mu >0$, $g(0)=0$ and there exists $\sigma >0$ such that $%
\left( \sigma -u\right) g(u)>0$ for $u\neq \sigma $, $g\in C[0,\sigma ]\cap
C^{2}(0,\sigma )$ and $g^{\prime \prime }(u)<0$ on $\left( 0,\sigma \right) $%
. Clearly, $g^{\prime }(0^{+})\in (0,\infty ]$ and there exists $u_{0}\in
\left( 0,\sigma \right) $ such that
\begin{equation*}
g^{\prime }(u_{0})=0,\text{ \ }g^{\prime }(u)>0\text{ on }\left(
0,u_{0}\right) \text{, \ and \ }g^{\prime }(u)<0\text{ on }(u_{0},\sigma ).
\end{equation*}

Obviously, (\ref{eq1}) is a semipositone problem. Semipositone problems are
not only of mathematical interest but also have practical applications in
various fields, including the buckling of mechanical systems, the design of
suspension bridges, chemical reactions, and population models with
harvesting effort, cf. \cite{Ali, Girao, Jiang, Myerscough, Oruganti,
Oruganti2} and references therein. While (\ref{eq1}) may admit nonnegative
solutions, this paper focuses exclusively on the positive solutions of (\ref%
{eq1}).

We now define the bifurcation curves $S_{\mu }$ and $\Sigma _{\lambda }$ of
positive solutions of (\ref{eq1}) as follows:

\begin{itemize}
\item[(i)] For $\mu >0$, the bifurcation curve $S_{\mu }$ of positive
solutions of (\ref{eq1}) is defined on the\ $(\lambda ,\left\Vert
u\right\Vert _{\infty })$-plane by%
\begin{equation}
S_{\mu }\equiv \left\{ \left( \lambda ,\left\Vert u_{\lambda }\right\Vert
_{\infty }\right) :\lambda >0\text{ and }u_{\lambda }\text{ is a positive
solution of (\ref{eq1})}\right\} .  \label{SL}
\end{equation}

\item[(ii)] For $\lambda >0$, the bifurcation curve $\Sigma _{\lambda }$ of
positive solutions of (\ref{eq1}) is defined on the $(\mu ,\left\Vert
u\right\Vert _{\infty })$-plane by
\begin{equation}
\Sigma _{\lambda }\equiv \left\{ \left( \mu ,\left\Vert u_{\mu }\right\Vert
_{\infty }\right) :\mu >0\text{ and }u_{\mu }\text{ is a positive solution
of (\ref{eq1})}\right\} .  \label{CL}
\end{equation}
\end{itemize}

\noindent It is well known that studying the exact shape of the bifurcation
curves $S_{\mu }$ or $\Sigma _{\lambda }$ of (\ref{eq1}) is equivalent to
studying the exact multiplicity of positive solutions of (\ref{eq1}).
Therefore, many researchers have devoted significant efforts to studying the
shapes of bifurcation curves, cf. \cite{Huang1,Huang2,Huang3,Huang4} and
references therein.

As $\mu >0$, (\ref{eq1}) considers both the intrinsic growth of the species
and the effect of external harvesting. The nonlinear term $\lambda g(u)$
characterizes the generalized logistic growth, where $\lambda $ represents
the intrinsic growth rate, and $\mu $ denotes a constant harvesting rate,
illustrating the impact of external harvesting on the population dynamics.
This model has significant applications in resource management, particularly
in fisheries, where determining the optimal balance between harvesting and
sustainable growth is essential.

Recently, Hung et al. \cite{Hung, Hung2} studied the following closely
related model%
\begin{equation}
\left\{
\begin{array}{l}
-u^{\prime \prime }=\lambda g(u)-\mu ,\text{ \ in }\left( -1,1\right) , \\
u(-1)=u(1)=0.%
\end{array}%
\right.  \label{eq3}
\end{equation}%
Similarly, by analyzing the shape of the corresponding bifurcation curve of (%
\ref{eq3}), one can determine the exact number of positive solutions of (\ref%
{eq3}). To achieve this, Hung et al. \cite{Hung} initially explored the
properties of the nonlinearity $\lambda g(u)-\mu $, yielding several key
results.

\begin{lemma}[{\protect\cite[(1.4)--(1.6)]{Hung}}]
\label{LH}Assume that
\begin{equation*}
\lambda >\frac{\mu }{\max\limits_{u\in \lbrack 0,\sigma ]}g(u)}=\frac{\mu }{%
g(u_{0})}\equiv \lambda _{\min }.
\end{equation*}%
Let $G(u)\equiv \int_{0}^{u}g(t)dt$, $f_{\mu ,\lambda }(u)\equiv \lambda
g(u)-\mu $ and $F_{\mu ,\lambda }(u)\equiv \lambda G(u)-\mu u$. Then the
following statements (i)--(iii) hold:

\begin{itemize}
\item[(i)] There exist $\varsigma _{\mu ,\lambda },\beta _{\mu ,\lambda }\in
(0,\sigma )$ such that%
\begin{equation*}
f_{\mu ,\lambda }(u)\left\{
\begin{array}{ll}
<0 & \text{on }(0,\varsigma _{\mu ,\lambda })\cup (\beta _{\mu ,\lambda
},\sigma ), \\
=0 & \text{for }u=\varsigma _{\mu ,\lambda }\text{ and }u=\beta _{\mu
,\lambda }, \\
>0 & \text{on }(\varsigma _{\mu ,\lambda },\beta _{\mu ,\lambda }).%
\end{array}%
\right.
\end{equation*}

\item[(ii)] There exists a unique $c^{\ast }\in (u_{0},\sigma )$ such that%
\begin{equation*}
\left[ \frac{G(u)}{u}\right] ^{\prime }\left\{
\begin{array}{ll}
>0 & \text{for }0<u<c^{\ast }, \\
=0 & \text{for }u=c^{\ast }, \\
<0 & \text{for }c^{\ast }<u<\sigma ,%
\end{array}%
\right. \text{ \ and \ }\frac{G(c^{\ast })}{c^{\ast }}=g(c^{\ast }).
\end{equation*}

\item[(iii)] For
\begin{equation}
\lambda \geq \lambda _{\mu }\equiv \frac{\mu }{g(c^{\ast })},  \label{w0}
\end{equation}%
there exists a unique $\theta _{\mu ,\lambda }\in \left( \varsigma _{\mu
,\lambda },\beta _{\mu ,\lambda }\right) $ such that $F_{\mu ,\lambda
}(\theta _{\mu ,\lambda })=0$. Furthermore, $\theta _{\mu ,\lambda _{\mu
}}=\beta _{\mu ,\lambda _{\mu }}$.
\end{itemize}
\end{lemma}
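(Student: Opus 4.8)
The plan is to establish the three items in the order stated, each reducing to an elementary monotonicity argument once the right auxiliary function is in hand. Throughout I shall use only the following consequences of the hypotheses, all already recorded above: $g>0$ on $(0,\sigma)$ with $g(0)=g(\sigma)=0$; $g$ is strictly increasing on $(0,u_{0})$ and strictly decreasing on $(u_{0},\sigma)$ (this is exactly the sign information on $g'$, which in turn follows from $g''<0$ together with $g'(u_{0})=0$); and $G(\sigma)=\int_{0}^{\sigma}g>0$. For (i), $f_{\mu,\lambda}=\lambda g-\mu$ has the same shape as $g$: strictly increasing on $(0,u_{0})$ and strictly decreasing on $(u_{0},\sigma)$. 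Evaluating at the three natural points gives $f_{\mu,\lambda}(0)=f_{\mu,\lambda}(\sigma)=-\mu<0$, while $f_{\mu,\lambda}(u_{0})=\lambda g(u_{0})-\mu>0$ precisely because $\lambda>\lambda_{\min}=\mu/g(u_{0})$. Strict monotonicity on each side of $u_{0}$ then produces a unique $\varsigma_{\mu,\lambda}\in(0,u_{0})$ and a unique $\beta_{\mu,\lambda}\in(u_{0},\sigma)$ with $f_{\mu,\lambda}=0$ there, and the asserted sign pattern follows. I record for later use the characterization $g(\beta_{\mu,\lambda})=\mu/\lambda$.

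For (ii), set $h(u)\equiv G(u)/u$ on $(0,\infty)$, so that $h'(u)=p(u)/u^{2}$ with $p(u)\equiv ug(u)-G(u)$. Differentiating, $p'(u)=ug'(u)$, so $p$ is strictly increasing on $(0,u_{0})$ and strictly decreasing on $(u_{0},\sigma)$; moreover $p(0^{+})=0$ and $p(\sigma)=-G(\sigma)<0$. Hence $p>0$ on $(0,u_{0}]$, and on $(u_{0},\sigma)$ it descends from a positive value to a negative one, yielding a unique zero $c^{\ast}\in(u_{0},\sigma)$; thus $p>0$ on $(0,c^{\ast})$, $p(c^{\ast})=0$, $p<0$ on $(c^{\ast},\sigma)$. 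Dividing by $u^{2}$ transfers this sign pattern to $h'$, and $p(c^{\ast})=0$ is exactly $G(c^{\ast})/c^{\ast}=g(c^{\ast})$.

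For (iii), the starting point is $F_{\mu,\lambda}'=f_{\mu,\lambda}$, so by (i) the map $F_{\mu,\lambda}$ decreases on $(0,\varsigma_{\mu,\lambda})$, strictly increases on $(\varsigma_{\mu,\lambda},\beta_{\mu,\lambda})$, and decreases on $(\beta_{\mu,\lambda},\sigma)$. Since $F_{\mu,\lambda}(0)=0$ we get $F_{\mu,\lambda}(\varsigma_{\mu,\lambda})<0$, so on $(\varsigma_{\mu,\lambda},\beta_{\mu,\lambda}]$ the strictly increasing function $F_{\mu,\lambda}$ has a unique zero $\theta_{\mu,\lambda}$ if and only if $F_{\mu,\lambda}(\beta_{\mu,\lambda})\geq0$, with $\theta_{\mu,\lambda}=\beta_{\mu,\lambda}$ exactly when $F_{\mu,\lambda}(\beta_{\mu,\lambda})=0$. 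It remains to identify when this holds: writing $F_{\mu,\lambda}(\beta_{\mu,\lambda})=\beta_{\mu,\lambda}\bigl(\lambda h(\beta_{\mu,\lambda})-\mu\bigr)$ and using $g(\beta_{\mu,\lambda})=\mu/\lambda$ from (i), one gets $F_{\mu,\lambda}(\beta_{\mu,\lambda})\geq0\iff h(\beta_{\mu,\lambda})\geq g(\beta_{\mu,\lambda})\iff p(\beta_{\mu,\lambda})\leq0$, which by (ii) is equivalent to $\beta_{\mu,\lambda}\geq c^{\ast}$; since $g$ is strictly decreasing on $(u_{0},\sigma)$ and both $c^{\ast},\beta_{\mu,\lambda}$ lie there, this is in turn equivalent to $\mu/\lambda=g(\beta_{\mu,\lambda})\leq g(c^{\ast})$, i.e. $\lambda\geq\mu/g(c^{\ast})=\lambda_{\mu}$, with all equalities and strict inequalities matching up. In particular $\lambda=\lambda_{\mu}$ forces $\beta_{\mu,\lambda}=c^{\ast}$ and $F_{\mu,\lambda}(\beta_{\mu,\lambda})=0$, hence $\theta_{\mu,\lambda_{\mu}}=\beta_{\mu,\lambda_{\mu}}$. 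I expect the only genuine obstacle to be precisely this bookkeeping: the endpoint $\beta_{\mu,\lambda}$ itself varies with $\lambda$, so one must route the sign of $F_{\mu,\lambda}(\beta_{\mu,\lambda})$ through the auxiliary function $p$ of part (ii) rather than attempt a direct estimate; once that chain of equivalences is in place, everything else is routine monotonicity.
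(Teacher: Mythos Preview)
Your argument is correct and complete. The paper itself does not prove this lemma at all: it is stated with a citation to \cite{Hung} (specifically, equations (1.4)--(1.6) there) and no proof is given. You have supplied a clean self-contained proof where the paper simply imports the result. The chain of equivalences in part~(iii), routing the sign of $F_{\mu,\lambda}(\beta_{\mu,\lambda})$ through the auxiliary function $p(u)=ug(u)-G(u)$ of part~(ii) and then through the monotonicity of $g$ on $(u_{0},\sigma)$, is exactly the right idea and is carried out without error.
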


Hung et al. \cite{Hung} proved that, under varying conditions, the
corresponding bifurcation curves of (\ref{eq3}) are $\subset $-shaped on
both the $(\lambda ,\left\Vert u\right\Vert _{\infty })$-plane and $(\mu
,\left\Vert u\right\Vert _{\infty })$-plane. They further studied the
bifurcation surface in the $\left( \mu ,\lambda ,\left\Vert u\right\Vert
_{\infty }\right) $-space, and determined the bifurcation set on the $\left(
\mu ,\lambda \right) $-plane to obtain the exact multiplicity of positive
solutions of (\ref{eq3}). Motivated by their results, we extend this
analysis to the Minkowski curvature problem.

There are some references on bifurcation surfaces and bifurcation sets, cf.
\cite{Huang4,Hung,Hung3}, which facilitate the study of how the number of
positive solutions changes with respect to parameter variations. However, to
the best of my knowledge, there are no references to study such issues for
Minkowski curvature problem. To fill this gap, we study the bifurcation set
for Minkowski curvature problem (\ref{eq1}).

Finally, we present an example of a generalized logistic problem%
\begin{equation}
\left\{
\begin{array}{l}
-\left( u^{\prime }/\sqrt{1-{u^{\prime }}^{2}}\right) ^{\prime }=\lambda
u^{p}\left[ 1-\left( \frac{u}{K}\right) ^{q}\right] ^{r}-\mu ,\text{ \ in }%
\left( -L,L\right) , \\
u(-L)=u(L)=0,%
\end{array}%
\right.   \label{eq2}
\end{equation}%
where $K>0$ and $p,q,r>0$ satisfy one of the following conditions (h$_{1}$%
)--(h$_{4}$) holds:

\begin{itemize}
\item[(h$_{1}$)] $p+qr-1=0$ and $0<r\leq 1$.

\item[(h$_{2}$)] $p-1\leq 0<p+qr-1$ and $0<r\leq 1$.

\item[(h$_{3}$)] $p+qr-1<0\leq 2p-q+2qr-1$ and $0<r\leq 1$.

\item[(h$_{4}$)] $p+qr-1<0$, $2p-q+2qr-1<0$ and $0<r<\frac{4p\left(
1-p\right) }{\left( q-1\right) ^{2}+4pq}$.
\end{itemize}

\noindent Note that the nonlinearity in (\ref{eq2}) is concave if and onl if
one of (h$_{1}$)--(h$_{4}$) holds. Problem (\ref{eq2}) can be viewed as a
steady-state diffusive single-species model with generalized logistic
growth, where $K$ is the carrying capacity and $p,$ $q$ describe the
low-/high-density growth profiles. The additional constant term $-\mu $
represents constant-yield harvesting (quota harvesting), which is known to
introduce semipositone effects and threshold phenomena, including possible
nonexistence and multiplicity of positive equilibria depending on $\left(
\mu ,\lambda \right) .$Such harvesting mechanisms and their
parameter-dependent bifurcation behavior have been extensively investigated
in logistic-type reaction--diffusion models, cf. \cite{Ali, Girao, Hung,
Hung2}. Our results provide an explicit $\left( \mu ,\lambda \right) $%
-parameter classification for the existence and multiplicity of positive
solutions in the Minkowski-curvature setting.

As $\mu =0$ (i.e., no harvesting), problem (\ref{eq1}) reduces to the
classical diffusive logistic equation, commonly expressed as $g(u)=u\left(
1-u\right) $. This model has been widely studied, with foundational results
provided in \cite{Huang2, H1} and references therein. In this case, the
equation only considers the intrinsic growth of the species, without any
external harvesting influence. The nonlinear term $\lambda g(u)$ ensures
positivity, making it easier to analyze the bifurcation structure. Since $%
g^{\prime \prime }(u)<0$ on $\left( 0,\sigma \right) $, it is easy to prove
that%
\begin{equation*}
\left( \frac{g(u)}{u}\right) ^{\prime }<0\text{ \ for }0<u<\sigma .
\end{equation*}%
Consequently, by \cite[Theorems 2.1 and 2.2]{Huang2}, the following theorem
is established.

\begin{theorem}
\label{RT}Consider (\ref{eq1}) with $\mu =0$. Let%
\begin{equation}
\kappa \equiv \left\{
\begin{array}{ll}
\frac{\pi ^{2}}{4g^{\prime }(0^{+})L^{2}} & \text{if }g^{\prime }(0^{+})\in
(0,\infty ), \\
0 & \text{if }g^{\prime }(0^{+})=\infty ,%
\end{array}%
\right. \text{ \ and \ }m_{\sigma ,L}\equiv \min \{\sigma ,L\}.  \label{K}
\end{equation}%
Then the corresponding bifurcation curve is monotone increasing, starts from
$(\kappa ,0)$ and goes to $(\infty ,m_{\sigma ,L})$.
\end{theorem}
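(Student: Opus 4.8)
\textbf{Proof proposal (Theorem \ref{RT}).} The plan is to recast (\ref{eq1}) with $\mu =0$ as a time-map (quadrature) equation and then to appeal to the general Minkowski-curvature results \cite[Theorems 2.1 and 2.2]{Huang2}, so that the only genuine work is to verify their hypotheses. Since $\mu =0$, the nonlinearity $f(u)\equiv g(u)$ is nonnegative on $[0,\sigma ]$ and vanishes only at $u=0$ and $u=\sigma $, so (\ref{eq1}) is a positone problem. Multiplying the equation by $u^{\prime }$ and integrating shows (by standard arguments for this autonomous ODE) that every positive solution $u$ is even and concave with $\left\Vert u\right\Vert _{\infty }=u(0)\equiv \rho $ and satisfies
\begin{equation*}
\frac{1}{\sqrt{1-{u^{\prime }}^{2}}}=1+\lambda \left( G(\rho )-G(u)\right) ;
\end{equation*}
separating variables turns this into $T_{\lambda }(\rho )=L$, where
\begin{equation*}
T_{\lambda }(\rho )\equiv \int_{0}^{\rho }\frac{1+\lambda \left( G(\rho )-G(s)\right) }{\sqrt{\left[ 1+\lambda \left( G(\rho )-G(s)\right) \right] ^{2}-1}}\,ds .
\end{equation*}
Because $G$ is increasing on $(0,\sigma )$, the first integral forces $\rho \leq \sigma $, while $\left\vert u^{\prime }\right\vert <1$ forces $\rho =\left\vert u(0)-u(-L)\right\vert <L$; hence the bifurcation curve is exactly $\{(\lambda ,\rho ):T_{\lambda }(\rho )=L\}$ with $\rho \in (0,m_{\sigma ,L})$.

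Next I would check the structural hypotheses of \cite[Theorems 2.1 and 2.2]{Huang2} for $f=g$. The regularity $g\in C[0,\infty )\cap C^{2}(0,\infty )$ with $g(0)=0$ is assumed, and $g>0$ on $(0,\sigma )$ follows from $(\sigma -u)g(u)>0$; the decisive \emph{sublinearity} hypothesis is $\left( g(u)/u\right) ^{\prime }<0$ on $(0,\sigma )$, which (as already observed before the statement) holds because $\left( g(u)/u\right) ^{\prime }=\big(ug^{\prime }(u)-g(u)\big)/u^{2}$ and
\begin{equation*}
ug^{\prime }(u)-g(u)=\int_{0}^{u}\big(g^{\prime }(u)-g^{\prime }(t)\big)\,dt<0\qquad \text{for }0<u<\sigma ,
\end{equation*}
since $g^{\prime \prime }<0$ on $(0,\sigma )$ makes $g^{\prime }$ strictly decreasing there.

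Granting these hypotheses, \cite[Theorem 2.1]{Huang2} yields that $T_{\lambda }(\rho )=L$ determines $\rho $ as a strictly increasing function of $\lambda $, i.e.\ the curve is monotone increasing, and \cite[Theorem 2.2]{Huang2} identifies its endpoints. For the left endpoint I would use that near $u^{\prime }=0$ the operator $u^{\prime }/\sqrt{1-{u^{\prime }}^{2}}$ is asymptotically $u^{\prime }$, so linearizing (\ref{eq1}) at $u\equiv 0$ gives $-v^{\prime \prime }=\lambda g^{\prime }(0^{+})v$, $v(\pm L)=0$, whose principal eigenvalue is $\pi ^{2}/(4g^{\prime }(0^{+})L^{2})$; thus $(\lambda ,\left\Vert u\right\Vert _{\infty })\to (\kappa ,0)$ along the curve, the case $g^{\prime }(0^{+})=\infty $ degenerating to $\kappa =0$. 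For the right endpoint I would combine the fact that, for each fixed $\rho $, the map $\lambda \mapsto T_{\lambda }(\rho )$ is strictly decreasing with $\lim_{\lambda \to \infty }T_{\lambda }(\rho )=\rho $, with the observations that $T_{\lambda }(\rho )>\rho $ always and that $T_{\lambda }(\rho )\to \infty $ as $\rho \to \sigma ^{-}$ (the integrand acquires a non-integrable $\sim (\sigma -s)^{-1}$ singularity at $s=\sigma $ because $g(\sigma )=0$); in either case $T_{\lambda (\rho )}(\rho )=L$ can persist only if $\lambda (\rho )\to \infty $ as $\rho \to m_{\sigma ,L}^{-}$.

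The main obstacle is the strict monotonicity of $\rho \mapsto \lambda (\rho )$. By the implicit function theorem, together with $\partial _{\lambda }T_{\lambda }(\rho )<0$, this reduces to pinning down the sign of $\partial _{\rho }T_{\lambda }(\rho )$, which—after the substitution $s=\rho t$ that removes the endpoint singularity of the integrand—is precisely where the sublinearity $\left( g(u)/u\right) ^{\prime }<0$ on $(0,\sigma )$ is used. Since \cite[Theorem 2.1]{Huang2} already carries out that delicate time-map estimate under exactly this hypothesis, once the hypothesis is verified as above the theorem follows directly; the uniqueness of $\lambda $ for each $\rho $ and the two endpoint limits are then comparatively routine.
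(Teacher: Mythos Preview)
Your proposal is correct and follows essentially the same route as the paper: the paper's entire proof of Theorem \ref{RT} consists of observing that $g^{\prime\prime}<0$ on $(0,\sigma)$ forces $(g(u)/u)^{\prime}<0$ on $(0,\sigma)$ and then invoking \cite[Theorems 2.1 and 2.2]{Huang2}. You supply considerably more detail (the time-map derivation, the integral identity for $ug^{\prime}(u)-g(u)$, and the endpoint heuristics), but the logical skeleton---verify sublinearity, cite \cite{Huang2}---is identical.
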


The paper is organized as follows. Section 2 presents the main results and
an example. Section 3 provides several lemmas necessary for proving the main
results, while Section 4 contains the proofs of the main results. Section 5
provides the proof of Lemma \ref{Le10}.

\section{Main Results}

In this section, we present our main results. In Theorems \ref{T1} and \ref%
{T3}, we respectively establish the shapes of the bifurcation curves $S_{\mu
}$ and $\Sigma _{\lambda }$. Subsequently, we introduce the concepts of the
bifurcation surface and bifurcation set. For further details on the study of
bifurcation surfaces and bifurcation sets, readers may refer to references
\cite{Huang4, Hung, Hung3}. Moreover, in Theorem \ref{T2}, we provide the
exact multiplicity of positive solutions of (\ref{eq1}).

Recall the numbers $c^{\ast }$, $\theta _{\mu ,\lambda }$, $\kappa $ and $%
m_{\sigma ,L}$ defined by Lemma \ref{LH} and (\ref{K}), respectively. Let $%
c_{L}^{\ast }\equiv \min \{c^{\ast },L\}$.

\begin{theorem}
\label{T1}Consider (\ref{eq1}) with varying $\mu >0$. Then there exists $%
\bar{\lambda}\in (\lambda _{\mu },\infty )$ such that the bifurcation curve $%
S_{\mu }$ is continuous, starts from $(\bar{\lambda},\left\Vert u_{\bar{%
\lambda}}\right\Vert _{\infty })=(\bar{\lambda},\theta _{\mu ,\bar{\lambda}%
}) $, goes to $(\infty ,m_{\sigma ,L})$ and is $\subset $-shaped with
exactly one turning point $\left( \lambda ^{\ast },\left\Vert u_{\lambda
^{\ast }}\right\Vert _{\infty }\right) $ on the $\left( \lambda ,\left\Vert
u_{\lambda }\right\Vert _{\infty }\right) $-plane, see Figure \ref{fig1}.
Furthermore,

\begin{itemize}
\item[(i)] $\bar{\lambda}$ and $\left\Vert u_{\bar{\lambda}}\right\Vert
_{\infty }$ are strictly increasing and continuous functions with respect to
$\mu >0,$%
\begin{equation*}
\lim_{\mu \rightarrow 0^{+}}\left( \bar{\lambda},\left\Vert u_{\bar{\lambda}%
}\right\Vert _{\infty }\right) =\left( 4\kappa ,0\right) \text{ \ and \ }%
\lim_{\mu \rightarrow \infty }\left( \bar{\lambda},\left\Vert u_{\bar{\lambda%
}}\right\Vert _{\infty }\right) =\left( \infty ,c_{L}^{\ast }\right) .
\end{equation*}

\item[(ii)] $\lambda ^{\ast }$ and $\left\Vert u_{\lambda ^{\ast
}}\right\Vert _{\infty }$ are strictly increasing and continuous functions
with respect to $\mu >0,$%
\begin{equation*}
\lim_{\mu \rightarrow 0^{+}}\left( \lambda ^{\ast },\left\Vert u_{\lambda
^{\ast }}\right\Vert _{\infty }\right) =\left( 4\kappa ,0\right) \text{ \
and \ }\lim_{\mu \rightarrow \infty }\left( \lambda ^{\ast },\left\Vert
u_{\lambda ^{\ast }}\right\Vert _{\infty }\right) =\left( \infty
,c_{L}^{\ast }\right) .
\end{equation*}
\end{itemize}
\end{theorem}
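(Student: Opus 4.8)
The plan is to use the standard time-map (quadrature) method adapted to the Minkowski curvature operator. Since $g''(u)<0$ on $(0,\sigma)$, positive solutions of (\ref{eq1}) are symmetric about $x=0$, concave-like in the sense relevant to the curvature operator, and uniquely determined by their maximum value $\rho\equiv\|u\|_\infty=u(0)\in(\theta_{\mu,\lambda},\sigma)$ (the lower bound coming from Lemma \ref{LH}(iii) and the sign structure of $f_{\mu,\lambda}$, since we need $F_{\mu,\lambda}(\rho)>0$ for the quadrature to make sense). Multiplying the equation by $u'$ and integrating yields the first integral $1/\sqrt{1-u'^2}=1+F_{\mu,\lambda}(\rho)-F_{\mu,\lambda}(u)$, so that $u'=-\Phi(F_{\mu,\lambda}(\rho)-F_{\mu,\lambda}(u))$ on $(0,L)$ where $\Phi(s)=\sqrt{s(2+s)}/(1+s)$, and the boundary condition $u(L)=0$ becomes the time-map equation
\begin{equation*}
L=\int_0^\rho \frac{du}{\Phi\bigl(F_{\mu,\lambda}(\rho)-F_{\mu,\lambda}(u)\bigr)}\equiv T_{\mu,\lambda}(\rho).
\end{equation*}
So the first step is to rigorously set up this quadrature, verify the symmetry and monotonicity of solutions, pin down the admissible range of $\rho$, and check that $T_{\mu,\lambda}$ is continuous in all three variables $(\mu,\lambda,\rho)$ on its domain. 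The bifurcation curve $S_\mu$ is then the level set $\{T_{\mu,\lambda}(\rho)=L\}$ viewed in the $(\lambda,\rho)$-plane.

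The second step is to analyze the shape of $T_{\mu,\lambda}$ as a function of $\rho$ for fixed $\mu,\lambda$, and its monotonicity in $\lambda$. For the $\subset$-shape one must show that for each fixed $\mu$ there is a threshold $\bar\lambda$ such that: for $\lambda<\bar\lambda$ the equation $T_{\mu,\lambda}(\rho)=L$ has no solution, at $\lambda=\bar\lambda$ exactly one, and for $\lambda>\bar\lambda$ the map $\lambda\mapsto T_{\mu,\lambda}(\rho)=L$ traces out two branches meeting at a single turning point $\lambda^*$. Concretely I would prove: (a) for fixed $\rho$, $T_{\mu,\lambda}(\rho)$ is strictly decreasing in $\lambda$ (larger $\lambda$ means stronger forcing, shorter interval) — this follows by differentiating under the integral sign and using $\partial F_{\mu,\lambda}/\partial\lambda=G(u)>0$ together with monotonicity of $\Phi$; (b) as $\lambda\to\infty$ the admissible $\rho$-interval and $T_{\mu,\lambda}$ behave so that solutions exist and $\|u_\lambda\|_\infty\to m_{\sigma,L}$ — here one uses that as $\lambda\to\infty$ the solution is pushed up toward $\min\{\sigma,L\}$, exactly as in the $\mu=0$ case of Theorem \ref{RT}; (c) the crucial convexity/concavity information: using $g''<0$ on $(0,\sigma)$ one shows $F_{\mu,\lambda}$ has the sign and shape needed to force $T_{\mu,\lambda}(\cdot)$, as a function of $\rho$ along the solution set, to have the single-turning-point ($\subset$-shaped) structure. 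The threshold $\bar\lambda$ is identified by the degeneration of the quadrature at the left endpoint: at $\lambda=\lambda_\mu$ one has $\theta_{\mu,\lambda_\mu}=\beta_{\mu,\lambda_\mu}$ by Lemma \ref{LH}(iii), which forces $\bar\lambda>\lambda_\mu$, and the starting point $(\bar\lambda,\theta_{\mu,\bar\lambda})$ emerges because at the infimum of admissible $\lambda$ the solution's maximum collapses onto $\theta_{\mu,\bar\lambda}$ where $F_{\mu,\lambda}=0$.

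The third step handles the limits and monotonicity in $\mu$ claimed in (i) and (ii). As $\mu\to0^+$, $f_{\mu,\lambda}\to\lambda g$, $F_{\mu,\lambda}\to\lambda G$, and the problem limits to the $\mu=0$ problem of Theorem \ref{RT}, whose bifurcation curve starts at $(\kappa,0)$; the factor $4$ in $\lim_{\mu\to0^+}(\bar\lambda,\|u_{\bar\lambda}\|_\infty)=(4\kappa,0)$ comes from the fact that near $\rho=0$ the effective linearization over the half-interval of the \emph{semipositone} solution set behaves like the eigenvalue problem on an interval of half the length — i.e. the linearization at the left endpoint of $S_\mu$ sees $\pi^2/(g'(0^+)(2L)^2)\cdot$ something, but more carefully the $\subset$-shape means the starting point of $S_\mu$ is governed by a degenerate (tangential) solution rather than a bifurcation-from-zero, yielding the factor $4$; this should be extracted by a careful asymptotic expansion of $T_{\mu,\lambda}(\rho)$ as $(\mu,\rho)\to(0,0)$ simultaneously. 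As $\mu\to\infty$, the harvesting dominates and $c^*$ (from Lemma \ref{LH}(ii), where $G(c^*)/c^* = g(c^*)$) governs the limiting maximum; one shows $\|u_{\bar\lambda}\|_\infty,\|u_{\lambda^*}\|_\infty\to c_L^*=\min\{c^*,L\}$ and $\bar\lambda,\lambda^*\to\infty$. Strict monotonicity and continuity of $\bar\lambda,\lambda^*,\|u_{\bar\lambda}\|_\infty,\|u_{\lambda^*}\|_\infty$ in $\mu$ follow from the implicit function theorem applied to the system $T_{\mu,\lambda}(\rho)=L$, $\partial_\rho T_{\mu,\lambda}(\rho)=0$ (which defines the turning point) once one checks the relevant Jacobian is nonzero, using the strict sign of the second derivative $\partial_\rho^2 T$ at the turning point (itself a consequence of the single-turning-point analysis in Step 2).

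The main obstacle is Step 2, specifically proving that $T_{\mu,\lambda}$ has \emph{exactly one} critical point along the solution set (the $\subset$-shape with a unique turning point), rather than merely being eventually increasing. This is where the hypothesis $g''<0$ on $(0,\sigma)$ must be exploited fully: one typically differentiates $T_{\mu,\lambda}$, performs the substitution $u=\rho v$ or an analogous rescaling to move the $\rho$-dependence inside the integrand, and then shows the resulting integrand has a sign-change structure (one sign change, from $+$ to $-$ or vice versa) controlled by the concavity of $g$. The Minkowski operator makes $\Phi$ nonlinear (unlike the semilinear case where $\Phi(s)=\sqrt{2s}$), so the differentiation identities are messier and one must check that the extra terms coming from $\Phi'$ do not destroy the sign pattern — I expect this to require an auxiliary monotonicity lemma about $\Phi$ and its logarithmic derivative, and careful estimates near the endpoints $u=0$ and $u=\rho$ where the integrand is singular. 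This is presumably the content of Lemma \ref{Le10} whose proof is deferred to Section 5.
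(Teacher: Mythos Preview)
Your overall strategy is the paper's strategy: set up the Minkowski time map $T_{\mu,\lambda}(\alpha)$, show it is strictly decreasing in $\lambda$ (the paper's (\ref{Le4a})), analyze its shape in $\alpha$, and read off the $\subset$-shape of $S_\mu$ by parametrizing the level set $\{T_{\mu,\lambda}=L\}$ as $\alpha\mapsto\lambda_L(\alpha)$ (the paper's Lemma \ref{Le9}). So the architecture is right.

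Two corrections. First, the ``main obstacle'' you flag---that $T_{\mu,\lambda}(\cdot)$ has exactly one critical point in $\alpha$---is not Lemma \ref{Le10} (that lemma concerns the $\mu$-curve $\Sigma_\lambda$, not $S_\mu$); it is Lemma \ref{Le3}, and the mechanism is cleaner than the sign-change-in-the-integrand argument you sketch. The paper invokes \cite[Lemma 4.7]{Huang1}: from $f_{\mu,\lambda}''=\lambda g''<0$ one gets the differential inequality $T_{\mu,\lambda}''(\alpha)+\tfrac{2}{\alpha}T_{\mu,\lambda}'(\alpha)>0$ on $(\theta_{\mu,\lambda},\beta_{\mu,\lambda})$, which forces every critical point to be a strict local minimum, hence there is exactly one. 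This sidesteps the messy $\Phi'$ terms you were worried about. Second, the identification of $\bar\lambda$ and the factor $4$ in $\lim_{\mu\to0^+}\bar\lambda=4\kappa$ are not obtained by an ad hoc asymptotic expansion but via an auxiliary one-variable map $\Phi(\alpha,\lambda)$ (Lemma \ref{Le6}), obtained by evaluating $T_{\mu,\lambda}$ at the left endpoint $\alpha=\theta_{\mu,\lambda}$ and using $F_{\mu,\lambda}(\theta_{\mu,\lambda})=0$ to eliminate $\mu$; the limit then reduces to $\int_0^1 (t-t^2)^{-1/2}\,dt=\pi$, which produces the $4$. The monotonicity and limits for $\lambda^*$ are handled exactly as you propose, by the implicit function theorem applied to $T_{\mu,\lambda}(\tilde\alpha_{\mu,\lambda})=L$ (Lemma \ref{Le7}).
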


\begin{figure}[h]
\centering
\includegraphics[width=6.1in,height=2.6in,keepaspectratio]{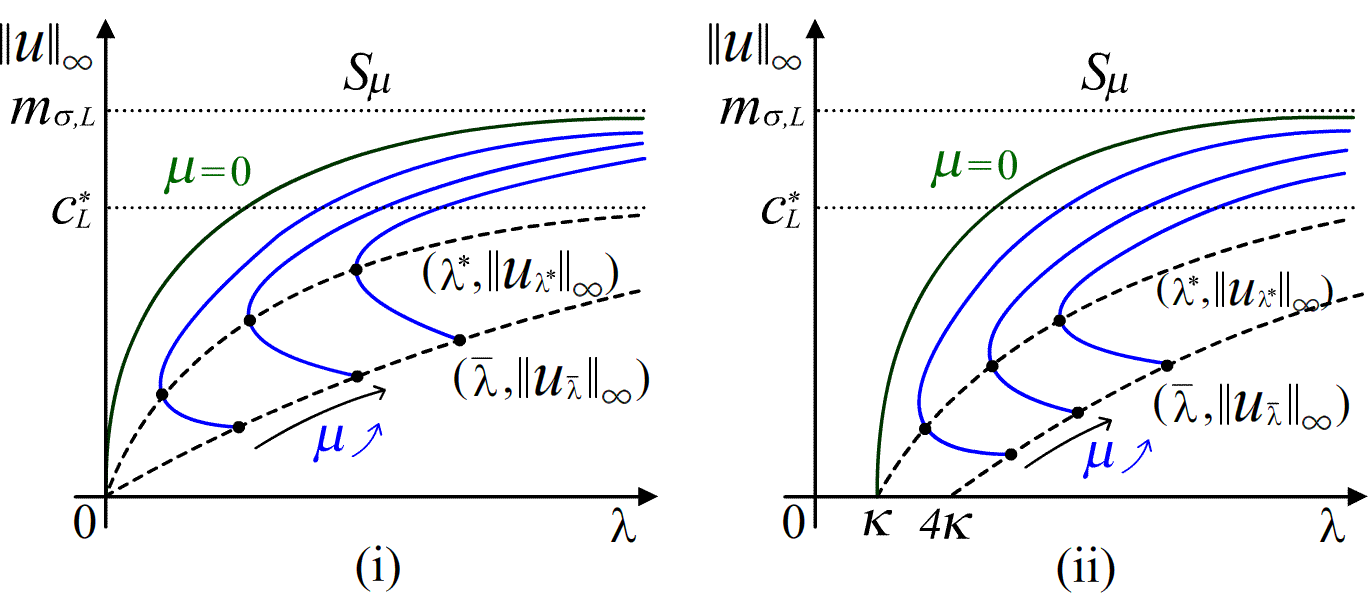}
\caption{Graphs of $S_{\protect\mu }$. $S_{\protect\mu }$ is monotone
increasing for $\protect\mu =0$, and $\subset $-shaped for $\protect\mu >0$.
(i) $g^{\prime }(0^{+})=\infty $. (ii) $g^{\prime }(0^{+})\in (0,\infty )$.}
\label{fig1}
\end{figure}
Let%
\begin{equation*}
T_{0,\lambda }(\alpha )\equiv \int_{0}^{\alpha }\frac{\lambda \left[
G(\alpha )-G(u)\right] +1}{\sqrt{\lambda ^{2}\left[ G(\alpha )-G(u)\right]
^{2}+2\lambda \left[ G(\alpha )-G(u)\right] }}du\text{ \ for }0<\alpha
<\sigma \text{ and }\lambda >0,
\end{equation*}%
where $G$ is defined in Lemma \ref{LH}. Notice that $T_{0,\lambda }(\alpha )$
is a time-map for (\ref{eq1}) with $\mu =0$, cf. \cite{Hung}. Let%
\begin{equation}
\eta \equiv \left\{
\begin{array}{ll}
\frac{\pi }{2\sqrt{\lambda g^{\prime }(0^{+})}} & \text{if }g^{\prime
}(0^{+})\in (0,\infty ), \\
0 & \text{if }g^{\prime }(0^{+})=\infty .%
\end{array}%
\right.   \label{E}
\end{equation}%
If $L>\eta $, by Lemma \ref{Le0} stated below, there exists unique $\gamma
_{\lambda }\in \left( 0,\sigma \right) $ such that
\begin{equation}
T_{0,\lambda }(\gamma _{\lambda })=L.  \label{r}
\end{equation}%
Therefore, we have the following Theorem \ref{T3}.

\begin{theorem}
\label{T3}Consider (\ref{eq1}) with varying $\lambda >0$. Then the following
statements (i)--(ii) hold:

\begin{itemize}
\item[(i)] Assume that $g^{\prime }(0^{+})\in (0,\infty )$.

\begin{itemize}
\item[(a)] If $0<\lambda \leq \kappa $, then the bifurcation curve $\Sigma
_{\lambda }$ does not exist.

\item[(b)] If $\kappa <\lambda \leq 4\kappa $, then the bifurcation curve $%
\Sigma _{\lambda }$ is continuous, starts from $(0,0)$, goes to $(0,\gamma
_{\lambda })$, and is reversed $\subset $-shaped on the $\left( \mu
,\left\Vert u_{\mu }\right\Vert _{\infty }\right) $-plane, see Figure \ref%
{fig2}(i).
\end{itemize}

\item[(ii)] Assume that $g^{\prime }(0^{+})\in (0,\infty ]$ and $\lambda
>4\kappa $. Then there exists $\bar{\mu}\in (0,\mu _{\lambda })$ such that
the bifurcation curve $\Sigma _{\lambda }$ is continuous, starts from $(\bar{%
\mu},\left\Vert u_{\bar{\mu}}\right\Vert _{\infty })=(\bar{\mu},\theta _{%
\bar{\mu},\lambda })$, goes to $(0,\gamma _{\lambda })$, and is reversed $%
\subset $-shaped on the $\left( \mu ,\left\Vert u_{\mu }\right\Vert _{\infty
}\right) $-plane, see Figure \ref{fig2}(ii). Furthermore, $\bar{\mu}$ and $%
\left\Vert u_{\bar{\mu}}\right\Vert _{\infty }$ are strictly increasing and
continuous functions with respect to $\lambda >0$,%
\begin{equation}
\lim_{\lambda \rightarrow \left( 4\kappa \right) ^{+}}\left( \bar{\mu}%
,\left\Vert u_{\bar{\mu}}\right\Vert _{\infty }\right) =\left( 0,0\right)
\text{ \ and \ }\lim_{\lambda \rightarrow \infty }\left( \bar{\mu}%
,\left\Vert u_{\bar{\mu}}\right\Vert _{\infty }\right) =\left( \infty
,c_{L}^{\ast }\right) .  \label{T3A}
\end{equation}
\end{itemize}
\end{theorem}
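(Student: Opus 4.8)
The plan is to recast (\ref{eq1}) as a time-map equation with $\lambda$ frozen and to study the resulting level set. Multiplying the equation by $u'$ and using $u'\bigl(u'/\sqrt{1-{u'}^{2}}\bigr)'=\frac{d}{dx}\bigl(1-{u'}^{2}\bigr)^{-1/2}$ yields, along any positive solution $u$ with $\alpha:=\left\Vert u\right\Vert_{\infty}=u(0)$, the first integral $\bigl(1-{u'}^{2}\bigr)^{-1/2}+F_{\mu,\lambda}(u)\equiv 1+F_{\mu,\lambda}(\alpha)$. Solving for $u'$ and integrating over $(-L,0)$ shows that a positive solution of norm $\alpha$ exists if and only if $\lambda\ge\lambda_{\mu}$, $\alpha\in(\theta_{\mu,\lambda},\beta_{\mu,\lambda})$ --- equivalently $F_{\mu,\lambda}(\alpha)>F_{\mu,\lambda}(u)$ for all $u\in[0,\alpha)$, which forces $1+F_{\mu,\lambda}(\alpha)-F_{\mu,\lambda}(u)>1$ so that the quadrature is real --- and $T_{\mu,\lambda}(\alpha)=L$, where
\[
T_{\mu,\lambda}(\alpha)\equiv\int_{0}^{\alpha}\frac{1+F_{\mu,\lambda}(\alpha)-F_{\mu,\lambda}(u)}{\sqrt{\bigl[F_{\mu,\lambda}(\alpha)-F_{\mu,\lambda}(u)\bigr]^{2}+2\bigl[F_{\mu,\lambda}(\alpha)-F_{\mu,\lambda}(u)\bigr]}}\,du ;
\]
so $\Sigma_{\lambda}$ is exactly this level set. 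Everything below uses the elementary observation that $x\mapsto(1+x)/\sqrt{x^{2}+2x}$ is strictly decreasing on $(0,\infty)$. For part~(i)(a): since $F_{\mu,\lambda}(\alpha)-F_{\mu,\lambda}(u)=\lambda[G(\alpha)-G(u)]-\mu(\alpha-u)<\lambda[G(\alpha)-G(u)]$ on $[0,\alpha)$ when $\mu>0$, this monotonicity gives $T_{\mu,\lambda}(\alpha)>T_{0,\lambda}(\alpha)$ at every admissible $\alpha$; and for $0<\lambda\le\kappa$ one has $L\le\eta$, so $\gamma_{\lambda}$ does not exist and, by Theorem~\ref{RT} and Lemma~\ref{Le0}, $T_{0,\lambda}>L$ throughout its domain. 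Hence $T_{\mu,\lambda}(\alpha)>L$ always, i.e.\ $\Sigma_{\lambda}=\emptyset$.

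For parts~(i)(b) and (ii), note first that for fixed $\lambda,\alpha$ the map $\mu\mapsto T_{\mu,\lambda}(\alpha)$ is strictly increasing (raising $\mu$ lowers $F_{\mu,\lambda}(\alpha)-F_{\mu,\lambda}(u)$), so $\Sigma_{\lambda}$ is the graph of a single-valued function $\mu=\mu(\alpha)$, continuous by the implicit function theorem. The essential input is Lemma~\ref{Le10}: for each fixed $(\mu,\lambda)$ the map $\alpha\mapsto T_{\mu,\lambda}(\alpha)$ has exactly one critical point $\phi(\mu)\in(\theta_{\mu,\lambda},\beta_{\mu,\lambda})$, at which it attains a strict minimum, and $T_{\mu,\lambda}(\alpha)\to\infty$ as $\alpha\to\beta_{\mu,\lambda}^{-}$. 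Differentiating $T_{\mu(\alpha),\lambda}(\alpha)=L$ implicitly gives $\mu'(\alpha)=-\,\partial_{\alpha}T_{\mu(\alpha),\lambda}(\alpha)\big/\partial_{\mu}T_{\mu(\alpha),\lambda}(\alpha)$; since $\partial_{\mu}T>0$, the sign of $\mu'(\alpha)$ is opposite to that of $\partial_{\alpha}T_{\mu(\alpha),\lambda}(\alpha)$, i.e.\ $\mu'>0$ where $\alpha<\phi(\mu(\alpha))$ and $\mu'<0$ where $\alpha>\phi(\mu(\alpha))$. Thus $\mu(\alpha)$ increases and then decreases, with a single turning point where $\alpha=\phi(\mu)$: $\Sigma_{\lambda}$ is reversed $\subset$-shaped. (This mirrors Theorem~\ref{T1}, where $T$ is decreasing in $\lambda$; the opposite sign of $\partial_{\mu}T$ versus $\partial_{\lambda}T$ is precisely why $S_{\mu}$ is $\subset$-shaped while $\Sigma_{\lambda}$ is reversed $\subset$-shaped.)

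It remains to locate the endpoints. Set $m(\mu)=T_{\mu,\lambda}(\phi(\mu))$ and $\widetilde{T}(\mu)=\lim_{\alpha\to\theta_{\mu,\lambda}^{+}}T_{\mu,\lambda}(\alpha)$, both finite with $m(\mu)<\widetilde{T}(\mu)$; $m$ is strictly increasing in $\mu$, with $m(\mu)\to\eta<L$ as $\mu\to0^{+}$ (using $\lambda>\kappa$ and the behaviour of $F_{\mu,\lambda}$ near $0$) and $m(\mu)\to\infty$ as $\mu\to\mu_{\lambda}^{-}$ (the admissible $\alpha$-interval collapsing to the point $c^{\ast}$), so there is a unique $\mu^{\ast}\in(0,\mu_{\lambda})$ with $m(\mu^{\ast})=L$ --- the turning point. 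Since $T_{\mu,\lambda}(\alpha)>T_{0,\lambda}(\alpha)>L$ for $\alpha>\gamma_{\lambda}$ by Theorem~\ref{RT}, on $\Sigma_{\lambda}$ one has $\alpha\le\gamma_{\lambda}$, and $\mu(\alpha)\to0$ as $\alpha\to\gamma_{\lambda}^{-}$, so the upper branch always ends at $(0,\gamma_{\lambda})$. The lower branch meets the edge $\alpha=\theta_{\mu,\lambda}$ precisely where $\widetilde{T}(\mu)=L$. Now $\widetilde{T}(\mu)=L$ holds exactly when $\lambda$ is the value $\bar{\lambda}$ at which the curve $S_{\mu}$ of Theorem~\ref{T1} starts; by Theorem~\ref{T1}(i), $\mu\mapsto\bar{\lambda}$ is a strictly increasing continuous bijection of $(0,\infty)$ onto $(4\kappa,\infty)$, so $\widetilde{T}(\mu)=L$ has a (unique) solution $\bar{\mu}=\bar{\lambda}^{-1}(\lambda)\in(0,\mu_{\lambda})$ iff $\lambda>4\kappa$. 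A short computation gives $\widetilde{T}(\mu)\to2\eta$ as $\mu\to0^{+}$ and $\widetilde{T}(\mu)\to\infty$ as $\mu\to\mu_{\lambda}^{-}$; combined with the uniqueness just noted, this forces $\widetilde{T}(\mu)>L$ on all of $(0,\mu_{\lambda})$ when $\lambda\le4\kappa$, and $\widetilde{T}(\mu)<L$ on $(0,\bar{\mu})$ while $\widetilde{T}(\mu)>L$ on $(\bar{\mu},\mu_{\lambda})$ when $\lambda>4\kappa$. Consequently, for $\kappa<\lambda\le4\kappa$ the lower branch persists down to $\mu\to0^{+}$, where $\theta_{\mu,\lambda},\phi(\mu)\to0$, so $\Sigma_{\lambda}$ starts from $(0,0)$ (case (i)(b)); for $\lambda>4\kappa$ it starts from $(\bar{\mu},\theta_{\bar{\mu},\lambda})$ with $\bar{\mu}\in(0,\mu^{\ast})\subset(0,\mu_{\lambda})$, and inverting Theorem~\ref{T1}(i) shows that $\bar{\mu}$ and $\left\Vert u_{\bar{\mu}}\right\Vert_{\infty}=\theta_{\bar{\mu},\lambda}$ are strictly increasing and continuous in $\lambda$ with the limits stated in (\ref{T3A}) (case (ii)).

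The main obstacle is Lemma~\ref{Le10}, the $\subset$-shaped time-map property for the Minkowski curvature operator: that $\alpha\mapsto T_{\mu,\lambda}(\alpha)$ has exactly one critical point. The additive constants ``$1$'' and ``$2$'' in the quadrature destroy the homogeneity that underlies the classical semilinear time-map analysis, so this uniqueness requires a delicate sign analysis of $T_{\mu,\lambda}'$ and $T_{\mu,\lambda}''$ (carried out in Section~5); granting it, the remaining ingredients --- the comparison $T_{\mu,\lambda}>T_{0,\lambda}$, the monotonicity of $T_{\mu,\lambda}$ in $\mu$, the endpoint asymptotics, and the transfer from Theorem~\ref{T1} --- are comparatively routine.
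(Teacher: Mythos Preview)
Your argument is correct and follows the paper's route: parametrize $\Sigma_\lambda$ as the graph $\mu=\mu_L(\alpha)$ via the time map, combine the strict monotonicity of $T_{\mu,\lambda}$ in $\mu$ with the unique-minimum property of $\alpha\mapsto T_{\mu,\lambda}(\alpha)$ to get the reversed $\subset$-shape, and read off the endpoints from the boundary asymptotics (Lemmas~\ref{Le4} and \ref{Le8}), inverting the bijection $\mu\mapsto\bar\lambda(\mu)$ of Theorem~\ref{T1} to obtain $\bar\mu(\lambda)$ and its limits.

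One correction is worth making, because it concerns where the real difficulty lies. The property that $\alpha\mapsto T_{\mu,\lambda}(\alpha)$ has exactly one critical point (your $\phi(\mu)$, the paper's $\tilde\alpha_{\mu,\lambda}$) is Lemma~\ref{Le3}, not Lemma~\ref{Le10}, and its proof is short: it follows from the concavity $f_{\mu,\lambda}''=\lambda g''<0$ together with the differential inequality $T_{\mu,\lambda}''+(2/\alpha)T_{\mu,\lambda}'>0$ of \cite[Lemma~4.7]{Huang1}, which forces every critical point to be a strict local minimum and hence unique. There is no ``delicate sign analysis of $T_{\mu,\lambda}'$ and $T_{\mu,\lambda}''$'' in Section~5. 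What Section~5 (the appendix) actually proves is Lemma~\ref{Le10}, the $\mu$-analogue of Lemma~\ref{Le9}: the construction of $\mu_L(\alpha)$ on the correct interval $I$ (with the case split $L>2\eta$ versus $\eta<L\le 2\eta$ that you handle informally), the continuity at the left endpoint, the sign relation $\mathrm{sgn}\,\mu_L'=-\mathrm{sgn}\,T_{\mu_L(\alpha),\lambda}'$, and the limit $\mu_L(\alpha)\to 0$ as $\alpha\to\gamma_\lambda^-$. So the ``main obstacle'' is not where you place it; the genuinely nonlinear step is in fact a two-line consequence of concavity, while the Section~5 material is the comparatively routine case-by-case verification that $\mu_L$ is globally defined and continuous.
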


\begin{remark}
Theorem \ref{T3}(ii) also covers the case when $g^{\prime }(0^{+})\in
(0,\infty )$ and $\lambda >4\kappa $, which is the seemingly omitted case in
Theorem \ref{T3}(i). In addition, as $g^{\prime }(0^{+})=\infty $, the
condition $\lambda >4\kappa $ reduces to $\lambda >0$.
\end{remark}

\begin{figure}[h]
\centering
\includegraphics[width=6.1in,height=2.6in,keepaspectratio]{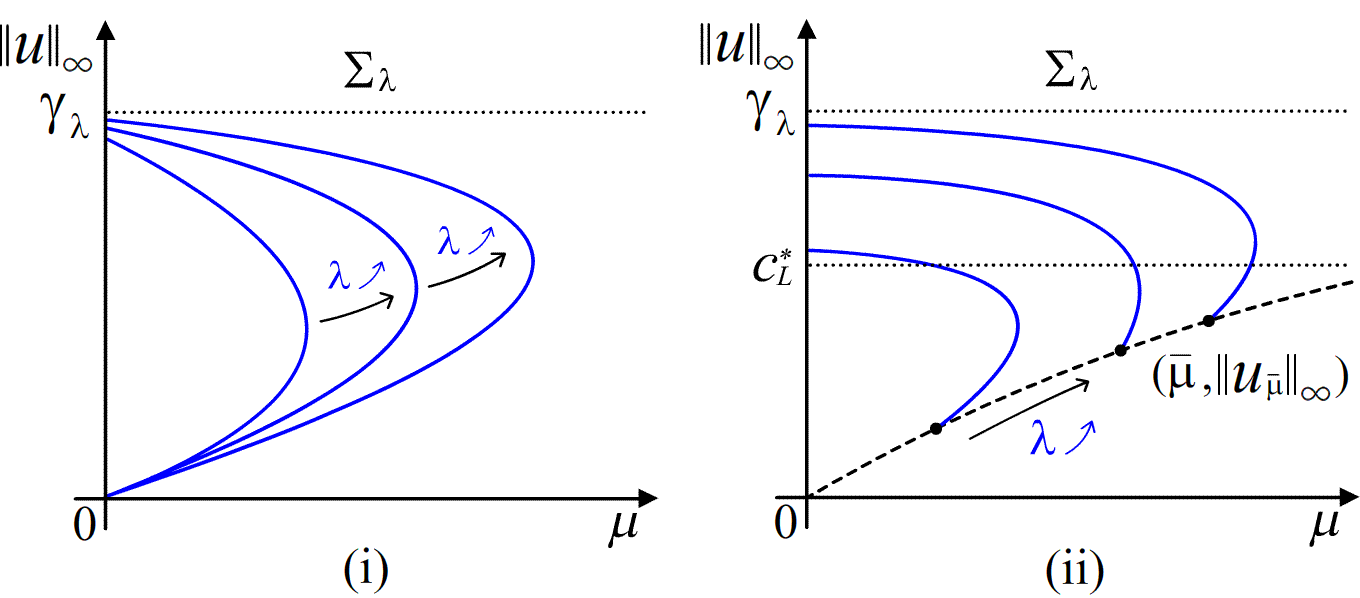}
\caption{Graphs of $\Sigma _{\protect\lambda }$. $\Sigma _{\protect\lambda }$
is reversed $\subset $-shaped for $\protect\lambda >\protect\kappa $. (i) $%
\protect\kappa <\protect\lambda \leq 2\protect\kappa .$ (ii) $\protect%
\lambda >2\protect\kappa $.}
\label{fig2}
\end{figure}
In the $\left( \mu ,\lambda ,\left\Vert u\right\Vert _{\infty }\right) $%
-space, the \emph{bifurcation surface} $\Gamma $ of (\ref{eq1}) is defined by%
\begin{equation*}
\Gamma \equiv \left\{ \left( \mu ,\lambda ,\left\Vert u\right\Vert _{\infty
}\right) :\left( \mu ,\lambda \right) \in \Omega \text{ \ and \ }u_{\mu
,\lambda }\text{ is a positive solution of (\ref{eq1})}\right\} .
\end{equation*}%
cf. \cite{Huang4, Hung, Hung3}. Recall that, by Theorem \ref{T1}, for fixed $%
%TCIMACRO{\U{3bc} }%
%BeginExpansion
\mu
%EndExpansion
>0$, $S_{\mu }$ is continuous, starts from $(\bar{\lambda},\left\Vert u_{%
\bar{\lambda}}\right\Vert _{\infty })$ and is $\subset $-shaped with exactly
one turning point $\left( \lambda ^{\ast },\left\Vert u_{\lambda ^{\ast
}}\right\Vert _{\infty }\right) $. So the bifurcation surface has the
appearance of a surface with the curve%
\begin{equation*}
C_{1}\equiv \left\{ (\mu ,\bar{\lambda}(\mu ),\left\Vert u_{\mu ,\bar{\lambda%
}(\mu )}\right\Vert _{\infty }):\mu >0\right\}
\end{equation*}%
being the set of all starting points $(\mu ,\bar{\lambda}(\mu ),\left\Vert
u_{\mu ,\bar{\lambda}(\mu )}\right\Vert _{\infty })=(\mu ,\bar{\lambda}%
,\theta _{\mu ,\bar{\lambda}})$, and with the curve%
\begin{equation*}
C_{2}\equiv \left\{ (\mu ,\lambda ^{\ast }(\mu ),\left\Vert u_{\mu ,\lambda
^{\ast }(\mu )}\right\Vert _{\infty }):\mu >0\right\}
\end{equation*}%
being the fold curve of $\Gamma $. We define the \emph{bifurcation set} $%
B_{\Gamma }\equiv B_{1}\cup B_{2}$ where%
\begin{equation*}
B_{1}\equiv \left\{ \left( \mu ,\bar{\lambda}(\mu )\right) :\mu >0\right\}
\text{ \ and \ }B_{2}\equiv \left\{ \left( \mu ,\lambda ^{\ast }(\mu
)\right) :\mu >0\right\} .
\end{equation*}%
Clearly, $B_{1}$ and $B_{2}$ are the projection of the curves $C_{1}$ and $%
C_{2}$ on the $\left( \mu ,\lambda \right) $-parameter plane, respectively,
see Figure \ref{fig5}.

\begin{figure}[h]
\centering
\includegraphics[width=6.1in,height=2.6in,keepaspectratio]{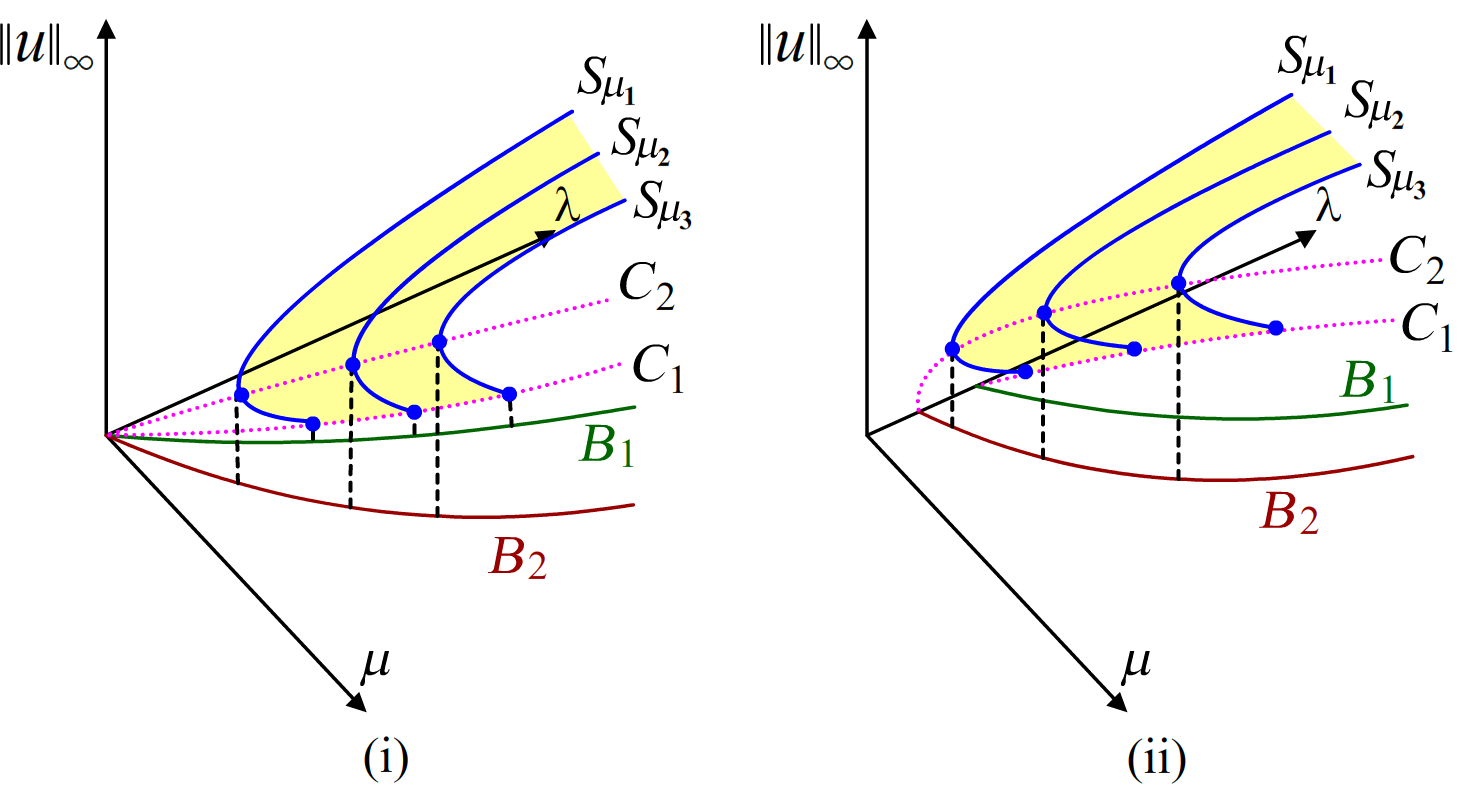}
\caption{The bifurcation set $B_{\Gamma }$. (i) $g^{\prime }(0^{+})=\infty $%
. (ii) $g^{\prime }(0^{+})\in (0,\infty )$.}
\label{fig5}
\end{figure}
In the following theorem, we examine the structure of the bifurcation set,
and exact multiplicity of positive solutions of (\ref{eq1}).

\begin{theorem}[See Figure \protect\ref{fig3}]
\label{T2}Consider (\ref{eq1}). Then the following statements (i)--(ii) hold.

\begin{itemize}
\item[(i)] $\bar{\lambda}=\bar{\lambda}(\mu )$ and $\lambda ^{\ast }=\lambda
^{\ast }(\mu )$ are strictly increasing and continuous functions with
respect to $\mu >0$. Furthermore,%
\begin{equation*}
\lim_{\mu \rightarrow 0^{+}}\bar{\lambda}(\mu )=4\kappa \text{, \ }\lim_{\mu
\rightarrow \infty }\bar{\lambda}(\mu )=\infty \text{, \ }\lim_{\mu
\rightarrow 0^{+}}\lambda ^{\ast }(\mu )=4\kappa \text{ \ and \ }\lim_{\mu
\rightarrow \infty }\lambda ^{\ast }(\mu )=\infty .
\end{equation*}

\item[(ii)] (\ref{eq1}) has no positive solutions for $\left( \mu ,\lambda
\right) \in M_{0}$, exactly one positive solution for $\left( \mu ,\lambda
\right) \in M_{1}\cup B_{2}$, and exactly two positive solutions for $\left(
\mu ,\lambda \right) \in M_{2}\cup B_{1}$, where%
\begin{equation*}
M_{0}\equiv \{\left( \mu ,\lambda \right) :\mu >0\text{ and }0<\lambda
<\lambda ^{\ast }(\mu )\},
\end{equation*}%
\begin{equation*}
M_{1}\equiv \{\left( \mu ,\lambda \right) :\mu >0\text{ and }\lambda >\bar{%
\lambda}(\mu )\},
\end{equation*}%
\begin{equation*}
M_{2}\equiv \{\left( \mu ,\lambda \right) :\mu >0\text{ and }\lambda ^{\ast
}(\mu )<\lambda <\bar{\lambda}(\mu )\}.
\end{equation*}
\end{itemize}
\end{theorem}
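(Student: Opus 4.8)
The plan is to derive Theorem~\ref{T2} as a consequence of Theorem~\ref{T1}; all of the genuinely hard analysis, including the Minkowski-curvature-specific estimates, has already been spent in establishing the $\subset$-shape and the endpoint behaviour of $S_{\mu}$, so what remains is geometric bookkeeping on the $(\mu,\lambda)$-parameter plane.

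Part~(i) is immediate. Theorem~\ref{T1}(i) asserts that the first coordinate $\bar{\lambda}(\mu)$ of the starting point of $S_{\mu}$ is a strictly increasing, continuous function of $\mu>0$ with $\lim_{\mu\to0^{+}}(\bar{\lambda},\left\Vert u_{\bar{\lambda}}\right\Vert_{\infty})=(4\kappa,0)$ and $\lim_{\mu\to\infty}(\bar{\lambda},\left\Vert u_{\bar{\lambda}}\right\Vert_{\infty})=(\infty,c_{L}^{\ast})$; reading off the first components gives $\lim_{\mu\to0^{+}}\bar{\lambda}(\mu)=4\kappa$ and $\lim_{\mu\to\infty}\bar{\lambda}(\mu)=\infty$. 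Applying Theorem~\ref{T1}(ii) in the same way to the turning point $(\lambda^{\ast},\left\Vert u_{\lambda^{\ast}}\right\Vert_{\infty})$ yields the corresponding statements for $\lambda^{\ast}(\mu)$. So part~(i) requires nothing beyond quoting Theorem~\ref{T1}.

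For part~(ii) I would first recall, from the time-map description of the positive solutions of~(\ref{eq1}) underlying the definition~(\ref{SL}) of $S_{\mu}$, that every positive solution of~(\ref{eq1}) is symmetric about $x=0$ and is uniquely determined by its maximum value $\alpha=u(0)=\left\Vert u\right\Vert_{\infty}$; hence for fixed $(\mu,\lambda)$ the positive solutions of~(\ref{eq1}) correspond bijectively to the points of $S_{\mu}$ on the vertical line of abscissa $\lambda$. Next I invoke Theorem~\ref{T1}: for fixed $\mu>0$, $S_{\mu}$ is a continuous $\subset$-shaped curve with a single turning point, starting at $(\bar{\lambda}(\mu),\theta_{\mu,\bar{\lambda}(\mu)})$, with smallest abscissa $\lambda^{\ast}(\mu)$ attained at its turning point, and escaping to $(\infty,m_{\sigma,L})$; in particular $\lambda^{\ast}(\mu)<\bar{\lambda}(\mu)$, for otherwise $S_{\mu}$ would be monotone rather than $\subset$-shaped. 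Parametrizing $S_{\mu}$ by $\alpha$, its $\lambda$-coordinate strictly decreases from $\bar{\lambda}(\mu)$ to $\lambda^{\ast}(\mu)$ on a first branch and then strictly increases from $\lambda^{\ast}(\mu)$ to $\infty$ on a second branch, so the vertical line of abscissa $\lambda$ meets $S_{\mu}$ in: no points when $0<\lambda<\lambda^{\ast}(\mu)$; exactly one point, the turning point, when $\lambda=\lambda^{\ast}(\mu)$; exactly two points, one on each branch, when $\lambda^{\ast}(\mu)<\lambda<\bar{\lambda}(\mu)$; exactly two points, namely the starting endpoint together with one point on the second branch, when $\lambda=\bar{\lambda}(\mu)$; and exactly one point, on the second branch, when $\lambda>\bar{\lambda}(\mu)$. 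For each $\mu>0$ these five $\lambda$-ranges exhaust $(0,\infty)$, so $M_{0},B_{2},M_{2},B_{1},M_{1}$ partition $\{(\mu,\lambda):\mu>0\}$; transporting the intersection counts to solution counts through the bijection above yields exactly the claimed multiplicity: no positive solution on $M_{0}$, exactly one on $M_{1}\cup B_{2}$, and exactly two on $M_{2}\cup B_{1}$.

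I do not anticipate a real obstacle, since the substance is absorbed into Theorem~\ref{T1}; the points needing care are all bookkeeping. One must make the solution-to-curve bijection precise, i.e.\ verify that each point of $S_{\mu}$ at a fixed $\lambda$ corresponds to exactly one positive solution of~(\ref{eq1}); this rests on the symmetry and strict concavity of positive solutions of~(\ref{eq1}) and on the strict monotonicity of the associated time-map along each branch of $S_{\mu}$, supplied by the time-map lemmas of Section~3. One must also treat the boundary sets $B_{1}$ and $B_{2}$ correctly: the only subtlety is that at $\lambda=\bar{\lambda}(\mu)$ the starting endpoint of $S_{\mu}$ is itself a genuine positive solution, so $B_{1}$ lies in the two-solution regime while $B_{2}$ (the turning point) lies in the one-solution regime. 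Finally one must record $\lambda^{\ast}(\mu)<\bar{\lambda}(\mu)$ for every $\mu>0$, which is precisely the force of ``$\subset$-shaped with exactly one turning point'' in Theorem~\ref{T1} and is what makes $M_{2}$ a nonempty two-solution wedge.
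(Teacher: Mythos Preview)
Your proposal is correct and follows essentially the same approach as the paper: the paper's own proof of Theorem~\ref{T2} is the single line ``Theorem~\ref{T2} follows by Theorem~\ref{T1} and Lemma~\ref{Le7},'' and since Theorem~\ref{T1}(i)--(ii) are themselves just restatements of the relevant parts of Lemma~\ref{Le7}, your derivation of part~(i) from Theorem~\ref{T1} alone is equivalent. Your treatment of part~(ii) via counting intersections of vertical lines with the $\subset$-shaped curve $S_{\mu}$ is exactly the geometric bookkeeping the paper leaves implicit, and your explicit handling of the boundary cases $B_{1}$ and $B_{2}$ and of the inequality $\lambda^{\ast}(\mu)<\bar{\lambda}(\mu)$ is more careful than the paper itself.
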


\begin{figure}[h]
\centering
\includegraphics[width=6.1in,height=2.6in,keepaspectratio]{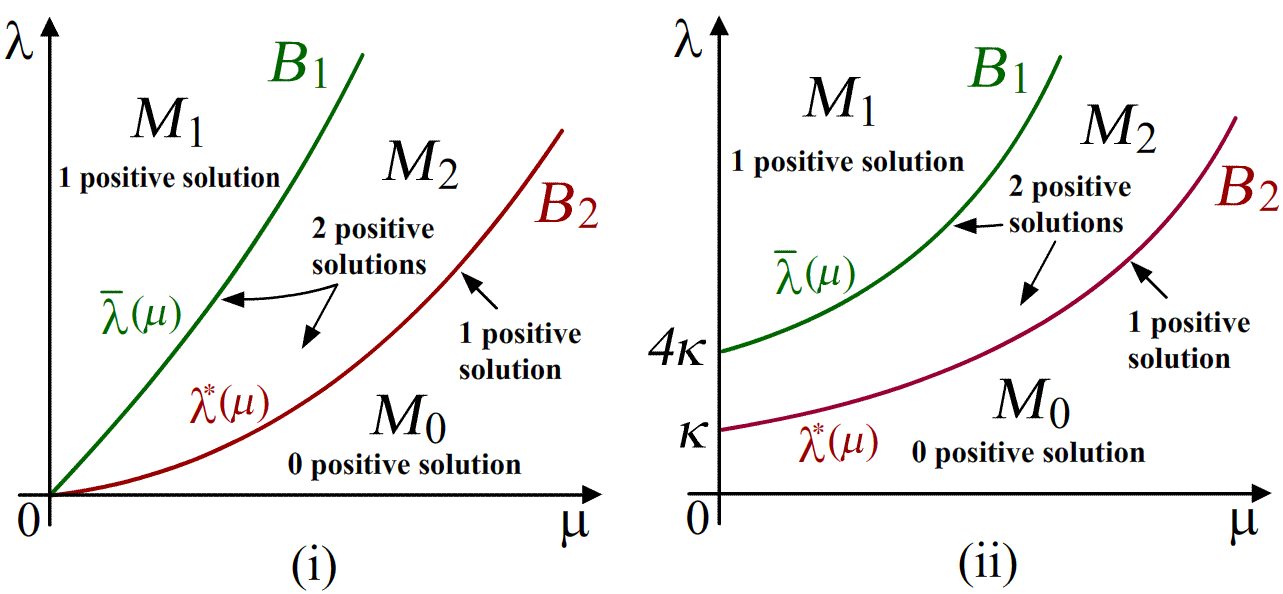}
\caption{The projection of the curves $C_{\Gamma }=C_{1}\cup C_{2}$ onto the
first quadrant of the $\left( \protect\mu ,\protect\lambda \right) $-plane.
(i) $g^{\prime }(0^{+})=\infty $. (ii) $g^{\prime }(0^{+})\in (0,\infty )$.}
\label{fig3}
\end{figure}

The bifurcation set in the $\left( \mu ,\lambda \right) $-plane serves as a
feasibility chart for constant-yield harvesting, distinguishing sustainable
and unsustainable harvesting quotas by the number of positive equilibria. In
particular, the two-solution region signals a fold-type transition and
critical thresholds for harvesting. This work extends the
bifurcation-diagram approach from the semilinear (Laplacian) case to the
Minkowski-curvature diffusion setting, where the underlying time-map
structure is fundamentally altered.

\begin{example}
\label{Ex1}Consider (\ref{eq2}). For the sake of convenience, we let
\begin{equation*}
g(u)=u^{p}\left[ 1-\left( \frac{u}{K}\right) ^{q}\right] ^{r}.
\end{equation*}%
Clearly, $\sigma =K$, $g(0)=g(K)=0$ and $\left( K-u\right) g(u)>0$ on $%
\left( 0,K\right) $. We compute%
\begin{equation*}
g^{\prime }(u)=u^{p-1}\left[ 1-\left( \frac{u}{K}\right) ^{q}\right] ^{r-1}%
\left[ p-\left( p+qr\right) \left( \frac{u}{K}\right) ^{q}\right] .
\end{equation*}%
It follows that $g^{\prime }(u_{0})=0,$\ $g^{\prime }(u)>0$ on $\left(
0,u_{0}\right) $, and $g^{\prime }(u)<0$ on $(u_{0},\sigma )$ where%
\begin{equation*}
u_{0}=K\left( \frac{p}{p+qr}\right) ^{\frac{1}{q}}.
\end{equation*}%
Furthermore, by (\ref{K}),%
\begin{equation*}
\lim_{u\rightarrow 0^{+}}g^{\prime }(u)=\left\{
\begin{array}{ll}
p & \text{if }p=1, \\
\infty  & \text{if }0<p<1.%
\end{array}%
\right. \text{ \ and \ }\kappa =\left\{
\begin{array}{ll}
\frac{\pi ^{2}}{4pL^{2}} & \text{if }p=1, \\
0 & \text{if }0<p<1.%
\end{array}%
\right.
\end{equation*}%
We compute%
\begin{equation}
g^{\prime \prime }(u)=u^{p-2}\left[ 1-\left( \frac{u}{K}\right) ^{q}\right]
^{r-2}\frac{\varphi (u^{q})}{K^{2q}},  \label{d2g}
\end{equation}%
where $\varphi (t)\equiv \left( p+qr\right) \left( p+qr-1\right) t^{2}-K^{q}%
\left[ 2pqr+2p\left( p-1\right) +qr\left( q-1\right) \right] t+K^{2q}p\left(
p-1\right) $. We compute%
\begin{equation}
\varphi (0)=K^{2q}p\left( p-1\right) \text{ \ and \ }\varphi
(K^{q})=K^{2q}q^{2}r\left( r-1\right) .  \label{413}
\end{equation}%
Next, we consider four cases.

\smallskip \textbf{Case 1.} Assume that (h$_{1}$) holds. By (\ref{413}),
then $\varphi (t)$ is a straight line with $\varphi (0)=-K^{2q}pqr<0$ and $%
\varphi (K^{q})\leq 0$. It follows that $\varphi (t)<0$ on $\left(
0,K^{q}\right) .$

\smallskip \textbf{Case 2}. Assume that (h$_{2}$) holds. By (\ref{413}),
then $\varphi (t)$ is a quadratic polynomial with positive leading
coefficient,%
\begin{equation*}
\varphi (0)\leq 0\text{ \ and \ }\varphi (K^{q})\leq 0.
\end{equation*}%
It follows that $\varphi (t)<0$ on $\left( 0,K^{q}\right) .$

\smallskip \textbf{Case 3}. Assume that (h$_{3}$) holds. By (\ref{413}),
then $\varphi (t)$ is a quadratic polynomial with negaive leading
coefficient,%
\begin{equation*}
\varphi (0)<-K^{2q}pqr<0\text{, \ }\varphi (K^{q})\leq 0\text{ \ and \ }%
\varphi ^{\prime }(K^{q})=K^{q}qr\left( 2p-q+2qr-1\right) \geq 0.
\end{equation*}%
It follows that $\varphi (t)<0$ on $\left( 0,K^{q}\right) .$

\smallskip \textbf{Case 4}. Assume that (h$_{4}$) holds. By (\ref{413}),
then $\varphi (t)$ is a quadratic polynomial with negaive leading
coefficient,%
\begin{equation*}
\varphi (0)<-K^{2q}pqr<0\text{, \ }\varphi (K^{q})\leq 0\text{ \ and \ }%
\varphi ^{\prime }(K^{q})=K^{q}qr\left( 2p-q+2qr-1\right) <0.
\end{equation*}%
Let $\Delta $ be the discriminant of $\varphi (t).$ Then%
\begin{equation*}
\Delta =K^{2q}q^{2}r\left[ \left( q-1\right) ^{2}+4pq\right] \left( r-\frac{%
4p\left( 1-p\right) }{\left( q-1\right) ^{2}+4pq}\right) <0.
\end{equation*}%
It follows that $\varphi (t)<0$ on $\left( 0,K^{q}\right) .$

\noindent By Cases 1--4 and (\ref{d2g}), then $g^{\prime \prime }(u)<0$ on $%
\left( 0,K^{q}\right) $. Thus, all results in Theorems \ref{T1}--\ref{T2}
hold. In particular, if $\gamma =1$, then%
\begin{equation*}
\frac{\partial }{\partial u}\frac{G(u)}{u}=\frac{\left( p+q\right) u^{p-1}}{%
K^{q}\left( p+q+1\right) }\left[ \frac{p\left( p+q+1\right) K^{q}}{\left(
p+1\right) \left( p+q\right) }-u^{q}\right] ,
\end{equation*}%
which implies that%
\begin{equation*}
c_{L}^{\ast }=\min \left\{ \left[ \frac{p\left( p+q+1\right) }{\left(
p+1\right) \left( p+q\right) }\right] ^{\frac{1}{q}}K,\text{ }L\right\} .
\end{equation*}
\end{example}

\smallskip

\section{Lemmas}

By Lemma \ref{LH}, we let%
\begin{equation*}
\Omega \equiv \left\{ (\mu ,\lambda ):\lambda >\frac{\mu }{g(c^{\ast })}%
>0\right\} \text{ \ and \ }\mu _{\lambda }\equiv g(c^{\ast })\lambda .
\end{equation*}%
The time-map formula for (\ref{eq1}) is given by%
\begin{equation}
T_{\mu ,\lambda }(\alpha )\equiv \int_{0}^{\alpha }\frac{B(\alpha ,u)+1}{%
\sqrt{B^{2}(\alpha ,u)+2B(\alpha ,u)}}du=\int_{0}^{1}\frac{\alpha \left[
B(\alpha ,\alpha t)+1\right] }{\sqrt{B^{2}(\alpha ,\alpha t)+2B(\alpha
,\alpha t)}}dt  \label{T}
\end{equation}%
for $\theta _{\mu ,\lambda }\leq \alpha <\beta _{\mu ,\lambda }$ and $\left(
\mu ,\lambda \right) \in \Omega $ where
\begin{equation*}
B(\alpha ,u)\equiv F_{\mu ,\lambda }(\alpha )-F_{\mu ,\lambda }(u)=\lambda
\left( G(\alpha )-G(u)\right) -\mu \left( \alpha -u\right) ,
\end{equation*}%
cf. \cite[p. 127]{Corsato} and \cite{Huang1, Huang2}. Observe that positive
solutions $u_{\mu ,\lambda }\in C^{2}(-L,L)\cap C[-L,L]$ for (\ref{eq1})
correspond to%
\begin{equation*}
\left\Vert u_{\mu ,\lambda }\right\Vert _{\infty }=\alpha \text{ \ and \ }%
T_{\mu ,\lambda }(\alpha )=L.
\end{equation*}%
So by (\ref{SL}) and (\ref{CL}), we have that%
\begin{equation}
S_{\mu }=\left\{ \left( \lambda ,\alpha \right) :T_{\mu ,\lambda }(\alpha )=L%
\text{ \ for some }\alpha \in \lbrack \theta _{\mu ,\lambda },\beta _{\mu
,\lambda })\text{ and }\lambda >\lambda _{\mu }\right\} \text{ for }\mu >0
\label{S}
\end{equation}%
\ and%
\begin{equation}
\Sigma _{\lambda }\equiv \left\{ \left( \mu ,\alpha \right) :T_{\mu ,\lambda
}(\alpha )=L\text{ \ for some }\alpha \in \lbrack \theta _{\mu ,\lambda
},\beta _{\mu ,\lambda })\text{ and }\mu >\mu _{\lambda }\right\} \text{ for
}\lambda >0.  \label{C}
\end{equation}%
Understanding the fundamental properties of the time-map function $T_{\mu
,\lambda }(\alpha )$ on $[\theta _{\mu ,\lambda },\beta _{\mu ,\lambda })$
is essential for analyzing the shapes of the bifurcation curves $S_{\mu }$
and $\Sigma _{\lambda }$. Since $g\in C^{2}\left( 0,\infty \right) $, it can
be proved that $T_{\mu ,\lambda }(\alpha )$ is twice continuously
differentiable with respect to $\alpha $, $\lambda $ and $\mu $,
individually. The proofs are straightforward but tedious and hence we omit
them.

\begin{lemma}
\label{Le1}Consider (\ref{eq1}). Then the following statements (i)--(ii)
hold:

\begin{itemize}
\item[(i)] For $\mu >0$, then $\theta _{\mu ,\lambda }$ and $\beta _{\mu
,\lambda }$ are continuously differentiable functions with respect to $%
\lambda \in (\lambda _{\mu },\infty )$. Furthermore,%
\begin{equation}
\frac{\partial \theta _{\mu ,\lambda }}{\partial \lambda }<0\text{, \ }\frac{%
\partial \beta _{\mu ,\lambda }}{\partial \lambda }>0\text{ \ for }\lambda
>\lambda _{\mu }  \label{0f}
\end{equation}%
and
\begin{equation*}
\lim_{\lambda \rightarrow \infty }\theta _{\mu ,\lambda }=0<\theta _{\mu
,\lambda _{\mu }}=\beta _{\mu ,\lambda _{\mu }}=c^{\ast }<\lim_{\lambda
\rightarrow \infty }\beta _{\mu ,\lambda }=\sigma .
\end{equation*}

\item[(ii)] For $\lambda >0$, then $\theta _{\mu ,\lambda }$ and $\beta
_{\mu ,\lambda }$ are continuously differentiable functions with respect to $%
\mu \in (0,\mu _{\lambda })$. Furthermore,%
\begin{equation*}
\frac{\partial \theta _{\mu ,\lambda }}{\partial \mu }>0\text{, \ }\frac{%
\partial \beta _{\mu ,\lambda }}{\partial \mu }<0\text{ \ for }0<\mu <\mu
_{\lambda }
\end{equation*}%
and
\begin{equation}
\lim_{\mu \rightarrow 0^{+}}\theta _{\mu ,\lambda }=0<\theta _{\mu _{\lambda
},\lambda }=\beta _{\mu _{\lambda },\lambda }=c^{\ast }<\lim_{\mu
\rightarrow 0^{+}}\beta _{\mu ,\lambda }=\sigma .  \label{0h}
\end{equation}
\end{itemize}
\end{lemma}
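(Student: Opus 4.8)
The plan is to derive both statements from the implicit function theorem applied to the two algebraic equations that define $\beta_{\mu,\lambda}$ and $\theta_{\mu,\lambda}$, and then to read off the signs of the derivatives and the limiting values from the behaviour of $g$ and of $u\mapsto G(u)/u$ recorded in Lemma~\ref{LH}. Recall that $\beta_{\mu,\lambda}\in(u_{0},\sigma)$ is the unique root there of $\lambda g(u)-\mu=0$, and that for the admissible parameters $\theta_{\mu,\lambda}$ is the unique zero of $F_{\mu,\lambda}(u)=\lambda G(u)-\mu u$ in $(\varsigma_{\mu,\lambda},\beta_{\mu,\lambda})$; equivalently $g(\beta_{\mu,\lambda})=\mu/\lambda$ and $G(\theta_{\mu,\lambda})/\theta_{\mu,\lambda}=\mu/\lambda$.

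To prove (i), fix $\mu>0$. Since $g'<0$ on $(u_{0},\sigma)$, the $u$-derivative of $\lambda g(u)-\mu$ at $\beta_{\mu,\lambda}$ is $\lambda g'(\beta_{\mu,\lambda})\neq0$, so the implicit function theorem makes $\beta_{\mu,\lambda}$ a $C^{1}$ (indeed $C^{2}$) function of $\lambda$ on $(\lambda_{\mu},\infty)$ with $\partial\beta_{\mu,\lambda}/\partial\lambda=-\mu/\bigl(\lambda^{2}g'(\beta_{\mu,\lambda})\bigr)>0$. For $\theta_{\mu,\lambda}$, the $u$-derivative of $F_{\mu,\lambda}$ at $\theta_{\mu,\lambda}$ equals $\lambda g(\theta_{\mu,\lambda})-\mu=f_{\mu,\lambda}(\theta_{\mu,\lambda})$; for $\lambda>\lambda_{\mu}$ Lemma~\ref{LH}(iii) places $\theta_{\mu,\lambda}$ in the \emph{open} interval $(\varsigma_{\mu,\lambda},\beta_{\mu,\lambda})$, on which $f_{\mu,\lambda}>0$ by Lemma~\ref{LH}(i), so this derivative is nonzero and
\begin{equation*}
\frac{\partial\theta_{\mu,\lambda}}{\partial\lambda}=-\frac{G(\theta_{\mu,\lambda})}{f_{\mu,\lambda}(\theta_{\mu,\lambda})}<0,
\end{equation*}
since $G(\theta_{\mu,\lambda})=\int_{0}^{\theta_{\mu,\lambda}}g>0$. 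This establishes \eqref{0f}.

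For the limits in (i): at $\lambda=\lambda_{\mu}=\mu/g(c^{\ast})$ the relation $g(\beta_{\mu,\lambda_{\mu}})=g(c^{\ast})$ forces $\beta_{\mu,\lambda_{\mu}}=c^{\ast}$, whence $\theta_{\mu,\lambda_{\mu}}=\beta_{\mu,\lambda_{\mu}}=c^{\ast}$ by Lemma~\ref{LH}(iii). As $\lambda\to\infty$ we have $g(\beta_{\mu,\lambda})=\mu/\lambda\to0^{+}$ with $\beta_{\mu,\lambda}\in(u_{0},\sigma)$ and $g(\sigma)=0$, so $\beta_{\mu,\lambda}\to\sigma$; meanwhile $\theta_{\mu,\lambda}$, which decreases from $c^{\ast}$ by the sign just found, stays on the branch where $u\mapsto G(u)/u$ is increasing (Lemma~\ref{LH}(ii)), and $G(\theta_{\mu,\lambda})/\theta_{\mu,\lambda}=\mu/\lambda\to0^{+}$ together with $G(u)/u\to g(0)=0$ as $u\to0^{+}$ forces $\theta_{\mu,\lambda}\to0^{+}$; this yields the displayed chain of (in)equalities. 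Statement (ii) follows in exactly the same way with $\mu\in(0,\mu_{\lambda})$ as the variable: the implicit function theorem gives $\partial\beta_{\mu,\lambda}/\partial\mu=1/\bigl(\lambda g'(\beta_{\mu,\lambda})\bigr)<0$ and $\partial\theta_{\mu,\lambda}/\partial\mu=\theta_{\mu,\lambda}/f_{\mu,\lambda}(\theta_{\mu,\lambda})>0$, and using $\theta_{\mu_{\lambda},\lambda}=\beta_{\mu_{\lambda},\lambda}=c^{\ast}$ together with $g(\beta_{\mu,\lambda})=G(\theta_{\mu,\lambda})/\theta_{\mu,\lambda}=\mu/\lambda\to0$ as $\mu\to0^{+}$ one obtains \eqref{0h}.

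The step I expect to be the main obstacle is verifying that the implicit function theorem really applies, i.e.\ that the relevant $u$-derivative does not vanish. For $\beta_{\mu,\lambda}$ this is automatic from $g'<0$ on $(u_{0},\sigma)$, but for $\theta_{\mu,\lambda}$ it rests on knowing that $\theta_{\mu,\lambda}$ lies \emph{strictly} between $\varsigma_{\mu,\lambda}$ and $\beta_{\mu,\lambda}$, so that $f_{\mu,\lambda}(\theta_{\mu,\lambda})>0$; this is precisely the content of Lemma~\ref{LH}(i) and~(iii) for $\lambda>\lambda_{\mu}$ (resp.\ $0<\mu<\mu_{\lambda}$), and at the endpoint the derivative degenerates because $f_{\mu,\lambda_{\mu}}(c^{\ast})=0$, which is why differentiability is asserted only on the open parameter interval. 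A secondary point requiring care is deciding which monotone branch of $u\mapsto G(u)/u$ contains $\theta_{\mu,\lambda}$, so that one picks out the limit $\theta_{\mu,\lambda}\to0^{+}$ rather than $\to\sigma^{-}$; the sign of $\partial\theta_{\mu,\lambda}/\partial\lambda$ (resp.\ $\partial\theta_{\mu,\lambda}/\partial\mu$) and the base value $c^{\ast}$ settle this.
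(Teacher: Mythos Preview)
Your proposal is correct and follows essentially the same approach as the paper: apply the implicit function theorem to the defining equations $g(\beta_{\mu,\lambda})=\mu/\lambda$ and $F_{\mu,\lambda}(\theta_{\mu,\lambda})=0$, read off the explicit derivative formulas $\partial\beta_{\mu,\lambda}/\partial\lambda=-\mu/(\lambda^{2}g'(\beta_{\mu,\lambda}))$, $\partial\theta_{\mu,\lambda}/\partial\lambda=-G(\theta_{\mu,\lambda})/f_{\mu,\lambda}(\theta_{\mu,\lambda})$ (and the analogous ones in $\mu$), and deduce the limits from $g(\beta_{\mu,\lambda})\to 0$ and $G(\theta_{\mu,\lambda})/\theta_{\mu,\lambda}\to 0$. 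Your discussion of why $\theta_{\mu,\lambda}$ stays on the increasing branch of $u\mapsto G(u)/u$ is in fact slightly more explicit than the paper's, but the substance is identical.
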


\begin{proof}
(I) Let $\mu >0$ be given. By Lemma \ref{LH}, we observe that, for $\lambda
>\lambda _{\mu }$,%
\begin{equation}
\left\{
\begin{array}{l}
g(\beta _{\mu ,\lambda })>0\text{ \ and \ }g^{\prime }(\beta _{\mu ,\lambda
})<0,\medskip \\
G(\theta _{\mu ,\lambda })>0\text{,}\medskip \\
f_{\mu ,\lambda }(\theta _{\mu ,\lambda })>0\text{, \ }f_{\mu ,\lambda
}(\beta _{\mu ,\lambda })=0\text{ \ and \ }f_{\mu ,\lambda }^{\prime }(\beta
_{\mu ,\lambda })=\lambda g^{\prime }(\beta _{\mu ,\lambda })<0,\medskip \\
F_{\mu ,\lambda }(\theta _{\mu ,\lambda })=0\text{ \ and \ }F_{\mu ,\lambda
}^{\prime }(\theta _{\mu ,\lambda })=f_{\mu ,\lambda }(\theta _{\mu ,\lambda
})>0.%
\end{array}%
\right.  \label{0c}
\end{equation}%
So by implicit function theorem, both $\beta _{\mu ,\lambda }$ and $\theta
_{\mu ,\lambda }$ are continuously differentiable functions with respect to $%
\lambda \in (\lambda _{\mu },\infty )$. Since
\begin{equation}
g(\beta _{\mu ,\lambda })=\frac{\mu }{\lambda }\text{ \ and \ }\lambda
G(\theta _{\mu ,\lambda })=\mu \theta _{\mu ,\lambda }  \label{0g}
\end{equation}%
and by (\ref{0c}), we obtain%
\begin{equation*}
\frac{\partial \beta _{\mu ,\lambda }}{\partial \lambda }=-\frac{\mu }{%
\lambda ^{2}g^{\prime }(\beta _{\mu ,\lambda })}>0\text{ \ and \ }\frac{%
\partial \theta _{\mu ,\lambda }}{\partial \lambda }=-\frac{G(\theta _{\mu
,\lambda })}{f(\theta _{\mu ,\lambda })}<0\text{ \ for }\lambda >\lambda
_{\mu }\text{,}
\end{equation*}%
which implies that (\ref{0f}) holds, and%
\begin{equation}
0<\lim_{\lambda \rightarrow \infty }\beta _{\mu ,\lambda }\leq \sigma \text{
\ and \ }0\leq \lim_{\lambda \rightarrow \infty }\theta _{\mu ,\lambda
}<\sigma .  \label{0b}
\end{equation}%
By (\ref{0g}), we compute%
\begin{equation}
\lim_{\lambda \rightarrow \infty }g(\beta _{\mu ,\lambda })=\lim_{\lambda
\rightarrow \infty }\frac{\mu }{\lambda }=0\text{ \ and \ }\lim_{\lambda
\rightarrow \infty }\frac{G(\theta _{\mu ,\lambda })}{\theta _{\mu ,\lambda }%
}=\lim_{\lambda \rightarrow \infty }\frac{\mu }{\lambda }=0.  \label{0a}
\end{equation}%
By (\ref{0b}) and (\ref{0a}), we conclude that%
\begin{equation*}
\lim_{\lambda \rightarrow \infty }\beta _{\mu ,\lambda }=\sigma \text{ \ and
\ }\lim_{\lambda \rightarrow \infty }\theta _{\mu ,\lambda }=0.
\end{equation*}%
Finally, by (\ref{0g}) and Lemma \ref{LH}(iii), we see that%
\begin{equation*}
\frac{G(\theta _{\mu ,\lambda _{\mu }})}{\theta _{\mu ,\lambda _{\mu }}}=%
\frac{\mu }{\lambda _{\mu }}=g(\beta _{\mu ,\lambda _{\mu }})=g(\theta _{\mu
,\lambda _{\mu }}),
\end{equation*}%
which, by Lemma \ref{LH}(ii), implies that $c^{\ast }=\beta _{\mu ,\lambda
_{\mu }}=\theta _{\mu ,\lambda _{\mu }}$. Thus, the statement (i) holds.

\smallskip

(II) Let $\lambda >0$ be given. By (\ref{0c}), (\ref{0g}) and implicit
function theorem, both $\theta _{\mu ,\lambda }$ and $\beta _{\mu ,\lambda }$
are continuously differentiable functions with respect to $\mu \in (0,\mu
_{0})$,%
\begin{equation}
\frac{\partial \beta _{\mu ,\lambda }}{\partial \mu }=\frac{1}{\lambda
g^{\prime }(\beta _{\mu ,\lambda })}<0\text{ \ and \ }\frac{\partial \theta
_{\mu ,\lambda }}{\partial \mu }=\frac{\theta _{\mu ,\lambda }}{f(\theta
_{\mu ,\lambda })}>0\text{ \ for }\mu \in (0,\mu _{0}).  \label{0j}
\end{equation}%
Since the proof of (\ref{0h}) follows a similar argument in (I), we omit the
detail. Thus, the statement (ii) holds. The proof is complete.
\end{proof}

\begin{lemma}[See Figure \protect\ref{fig8}]
\label{Le0}Consider (\ref{eq2}). For any $\lambda >0$, the following
statements (i)--(ii) hold.

\begin{itemize}
\item[(i)] $T_{0,\lambda }^{\prime }(\alpha )>0$ on $(0,\sigma )$, $%
T_{0,\lambda }(0^{+})=\eta $ and $T_{0,\lambda }(\sigma ^{-})=\infty $ where
$\eta $ is defined by (\ref{E}). Moreover, if $L>\eta $, there exists unique
$\gamma _{\lambda }\in \left( 0,\sigma \right) $ such that $T_{0,\lambda
}(\gamma _{\lambda })=L$.

\item[(ii)] $T_{\mu ,\lambda }(\alpha )>\lim_{\mu \rightarrow 0^{+}}T_{\mu
,\lambda }(\alpha )=T_{0,\lambda }(\alpha )$ for $\theta _{\mu ,\lambda
}<\alpha <\beta _{\mu ,\lambda }$ and $\mu \in (0,\mu _{\lambda }).$
\end{itemize}
\end{lemma}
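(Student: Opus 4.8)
The lemma turns on a single monotonicity fact. Set $\phi(s)\equiv(s+1)/\sqrt{s^{2}+2s}$ for $s>0$; since $\phi(s)=\tau(\tau^{2}-1)^{-1/2}$ with $\tau=s+1$ and $\frac{d}{d\tau}\bigl(\tau(\tau^{2}-1)^{-1/2}\bigr)=-(\tau^{2}-1)^{-3/2}<0$, the map $\phi$ is strictly decreasing on $(0,\infty)$ and $\phi(s)\geq(3s)^{-1/2}$ for $0<s\leq1$. Writing $B_{\mu}(\alpha,u)\equiv F_{\mu,\lambda}(\alpha)-F_{\mu,\lambda}(u)=\lambda[G(\alpha)-G(u)]-\mu(\alpha-u)$, formula (\ref{T}) reads $T_{\mu,\lambda}(\alpha)=\int_{0}^{\alpha}\phi(B_{\mu}(\alpha,u))\,du$ and, likewise, $T_{0,\lambda}(\alpha)=\int_{0}^{\alpha}\phi(B_{0}(\alpha,u))\,du$ with $B_{0}(\alpha,u)=\lambda[G(\alpha)-G(u)]$; moreover $B_{\mu}(\alpha,u)$ is strictly decreasing in $\mu\geq0$ for each fixed $\alpha$ and each $u\in(0,\alpha)$, since $\alpha-u>0$.

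I would dispatch part (ii) first, as it is immediate. Fix $\lambda>0$, $\mu\in(0,\mu_{\lambda})$ and $\alpha\in(\theta_{\mu,\lambda},\beta_{\mu,\lambda})$. By Lemma \ref{LH} and (\ref{0c}), $F_{\mu,\lambda}<0$ on $(0,\theta_{\mu,\lambda})$ while $F_{\mu,\lambda}$ is strictly increasing on $(\theta_{\mu,\lambda},\beta_{\mu,\lambda})$, so $B_{\mu}(\alpha,u)>0$ on $(0,\alpha)$; and $B_{0}(\alpha,u)>0$ there too since $G$ is strictly increasing on $(0,\sigma)$. As $0<B_{\mu}(\alpha,u)<B_{0}(\alpha,u)$ on $(0,\alpha)$ and $\phi$ is strictly decreasing, integrating gives $T_{\mu,\lambda}(\alpha)>T_{0,\lambda}(\alpha)$. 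For the limit, Lemma \ref{Le1}(ii) says $\theta_{\mu,\lambda}$ increases and $\beta_{\mu,\lambda}$ decreases in $\mu$, with $\theta_{\mu,\lambda}\to0$ and $\beta_{\mu,\lambda}\to\sigma$ as $\mu\to0^{+}$ by (\ref{0h}); so one may fix $\mu_{0}\in(0,\mu_{\lambda})$ small enough that $\alpha\in(\theta_{\mu_{0},\lambda},\beta_{\mu_{0},\lambda})$, and then $\alpha\in(\theta_{\mu,\lambda},\beta_{\mu,\lambda})$ for every $\mu\in(0,\mu_{0}]$. As $\mu\downarrow0$ one has $B_{\mu}(\alpha,u)\uparrow B_{0}(\alpha,u)$, hence $\phi(B_{\mu}(\alpha,u))\downarrow\phi(B_{0}(\alpha,u))$ pointwise on $(0,\alpha)$ with $0\leq\phi(B_{\mu}(\alpha,\cdot))\leq\phi(B_{\mu_{0}}(\alpha,\cdot))\in L^{1}(0,\alpha)$; dominated convergence gives $\lim_{\mu\to0^{+}}T_{\mu,\lambda}(\alpha)=T_{0,\lambda}(\alpha)$.

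For part (i), the monotonicity $T_{0,\lambda}^{\prime}(\alpha)>0$ on $(0,\sigma)$ and the boundary value $T_{0,\lambda}(0^{+})=\eta$ are precisely the time-map facts underpinning Theorem \ref{RT}; I would obtain them from \cite[Theorems 2.1, 2.2]{Huang2} together with the observation (recorded just before Theorem \ref{RT}) that $(g(u)/u)^{\prime}<0$ on $(0,\sigma)$. Alternatively, $T_{0,\lambda}(0^{+})=\eta$ follows directly: after the substitution $u=\alpha t$ one compares $g$ with its linearization---when $g^{\prime}(0^{+})<\infty$ this gives $G(\alpha)-G(\alpha t)=\tfrac{1}{2}g^{\prime}(0^{+})\alpha^{2}(1-t^{2})+o(\alpha^{2})$, so the integrand converges to $[\lambda g^{\prime}(0^{+})(1-t^{2})]^{-1/2}$ and $\int_{0}^{1}(1-t^{2})^{-1/2}dt=\pi/2$ yields $\eta$ (dominated convergence justifies the passage to the limit); when $g^{\prime}(0^{+})=\infty$, the bound $g(u)\geq Mu$ near $0$ gives $T_{0,\lambda}(\alpha)\leq\alpha+\pi/\sqrt{\lambda M}$ for small $\alpha$ and arbitrary $M$, so $T_{0,\lambda}(0^{+})=0=\eta$.

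It remains to prove $T_{0,\lambda}(\sigma^{-})=\infty$, which is where a short direct estimate is needed. Since $g\in C^{2}(0,\infty)$, $g(\sigma)=0$ and $g^{\prime}(\sigma)<0$ (as $\sigma>u_{0}$), concavity of $g$ near $\sigma$ gives $g(s)\leq|g^{\prime}(\sigma)|(\sigma-s)$ on some $[\sigma-\delta,\sigma]$, whence, uniformly in $\alpha\in(\sigma-\delta,\sigma)$,
\begin{equation*}
B_{0}(\alpha,u)=\lambda\int_{u}^{\alpha}g(s)\,ds\leq\lambda\int_{u}^{\sigma}g(s)\,ds\leq\tfrac{\lambda|g^{\prime}(\sigma)|}{2}(\sigma-u)^{2}\qquad(\sigma-\delta\leq u<\alpha).
\end{equation*}
Shrinking $\delta$ so that $\tfrac{\lambda|g^{\prime}(\sigma)|}{2}\delta^{2}\leq1$, the bound $\phi(s)\geq(3s)^{-1/2}$ yields $\phi(B_{0}(\alpha,u))\geq c_{0}(\sigma-u)^{-1}$ on $[\sigma-\delta,\alpha]$ with $c_{0}\equiv\sqrt{2/(3\lambda|g^{\prime}(\sigma)|)}>0$, hence $T_{0,\lambda}(\alpha)\geq c_{0}\int_{\sigma-\delta}^{\alpha}(\sigma-u)^{-1}du=c_{0}\log\!\bigl(\delta/(\sigma-\alpha)\bigr)\to\infty$ as $\alpha\to\sigma^{-}$. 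Finally $T_{0,\lambda}$ is continuous and strictly increasing on $(0,\sigma)$ with $T_{0,\lambda}(0^{+})=\eta<L$ and $T_{0,\lambda}(\sigma^{-})=\infty$, so by the intermediate value theorem there is a unique $\gamma_{\lambda}\in(0,\sigma)$ with $T_{0,\lambda}(\gamma_{\lambda})=L$. The one genuinely delicate point---should one insist on proving $T_{0,\lambda}^{\prime}(\alpha)>0$ from scratch rather than citing \cite{Huang2}---is that after $u=\alpha t$ the sign of $T_{0,\lambda}^{\prime}(\alpha)$ reduces to the positivity of an integral over $t\in(0,1)$ whose integrand degenerates as $t\to1^{-}$, and handling it requires careful use of the concavity $g^{\prime\prime}<0$ on $(0,\sigma)$; everything else above is routine.
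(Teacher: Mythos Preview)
Your argument is correct and runs closely parallel to the paper's. For part (i) the paper does exactly what you do---it verifies $(g(u)/u)'<0$ from $g''<0$ and then cites \cite{Huang2} for both $T_{0,\lambda}'>0$ and the two boundary limits $T_{0,\lambda}(0^{+})=\eta$, $T_{0,\lambda}(\sigma^{-})=\infty$; your added value is the self-contained estimate for $T_{0,\lambda}(\sigma^{-})=\infty$ via the tangent-line bound $g(s)\le |g'(\sigma)|(\sigma-s)$, which the paper does not spell out. For part (ii) the paper computes the partial derivative $\partial_{\mu}T_{\mu,\lambda}(\alpha)=\int_{0}^{\alpha}(\alpha-u)[B^{2}+2B]^{-3/2}du>0$ and then applies the monotone convergence theorem, whereas you argue the same monotonicity at the level of the integrand through the decreasing function $\phi$ and pass to the limit by dominated convergence; these are equivalent packagings of the same observation that $B_{\mu}$ is decreasing in $\mu$. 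Both arguments are sound, and neither buys anything the other does not.
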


\begin{proof}
Let $\lambda >0$ be given. By \cite[Lemmas 4.1 and 4.2]{Huang2}, we obtain $%
T_{0,\lambda }(0^{+})=\eta $ and $T_{0,\lambda }(\sigma ^{-})=\infty $.
Since $g(0)=0$ and $g^{\prime \prime }(u)<0$ on $\left( 0,\sigma \right) $,
and by Mean-value theorem, we see that, for any $u\in (0,\sigma )$, there
exists $z_{u}\in \left( 0,u\right) $ such that%
\begin{equation}
g(u)=ug^{\prime }(z_{u})>ug^{\prime }(u)>0,  \label{gg}
\end{equation}%
from which it follows that
\begin{equation*}
\left( \frac{g(u)}{u}\right) ^{\prime }=\frac{ug^{\prime }(u)-g(u)}{u^{2}}<0%
\text{ \ for }u\in (0,\sigma ).
\end{equation*}%
So by \cite[p.3456]{Huang2}, we conclude that $T_{0,\lambda }^{\prime
}(\alpha )>0$ on $(0,\sigma )$. Thus, if $L>\eta $, there exists unique $%
\gamma _{\lambda }\in \left( 0,\sigma \right) $ such that $T_{0,\lambda
}(\gamma _{\lambda })=L$. The statement (i) holds.

We compute
\begin{equation}
\frac{\partial }{\partial \mu }T_{\mu ,\lambda }(\alpha )=\int_{0}^{\alpha }%
\frac{\alpha -u}{\left[ B(\alpha ,u)+2B(\alpha ,u)\right] ^{3/2}}du>0
\label{Le4b}
\end{equation}%
\ for $\theta _{\mu ,\lambda }\leq \alpha <\beta _{\mu ,\lambda }$ and $\mu
\in (0,\mu _{\lambda }).$ By (\ref{T}), (\ref{Le4b}) and Monotone
convergence theorem, we obtain%
\begin{eqnarray*}
T_{\mu ,\lambda }(\alpha ) &>&\lim_{\mu \rightarrow 0^{+}}T_{\mu ,\lambda
}(\alpha ) \\
&=&\int_{0}^{\alpha }\frac{\lambda \left[ G(\alpha )-G(u)\right] +1}{\sqrt{%
\lambda ^{2}\left[ G(\alpha )-G(u)\right] ^{2}+2\lambda \left[ G(\alpha
)-G(u)\right] }}du \\
&=&T_{0,\lambda }(\alpha )
\end{eqnarray*}%
for $\theta _{\mu ,\lambda }\leq \alpha <\beta _{\mu ,\lambda }$ and $\mu
\in (0,\mu _{\lambda })$. The statement (ii) holds. The proof is complete.
\end{proof}

\begin{figure}[h]
\centering
\includegraphics[width=6.1in,height=2.6in,keepaspectratio]{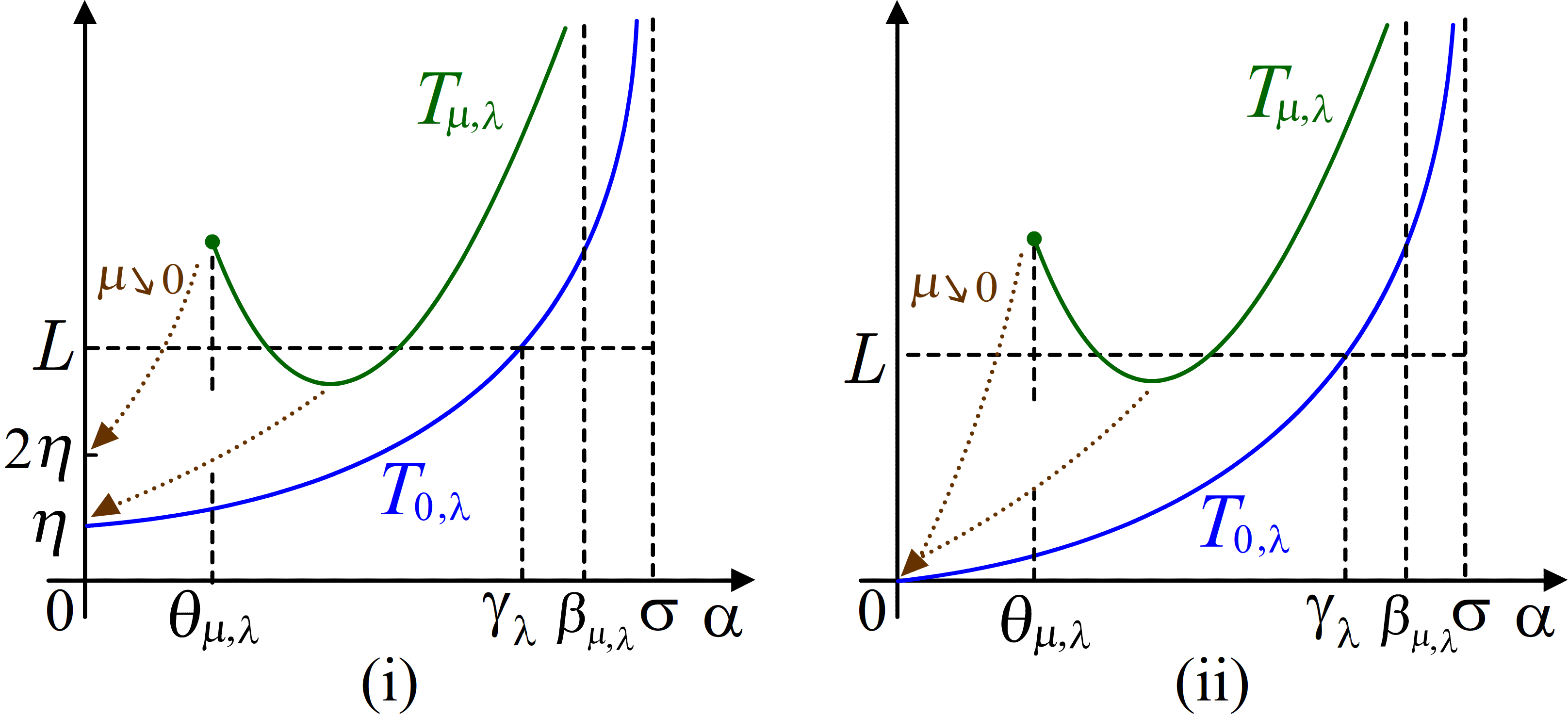}
\caption{Graphs of $T_{0,\protect\lambda }$ and $T_{\protect\mu ,\protect%
\lambda }$. (i) $\protect\eta >0$ (i.e. $g^{\prime }(0^{+})\in (0,\infty )$%
). (ii) $\protect\eta =0$ (i.e. $g^{\prime }(0^{+})=\infty $).}
\label{fig8}
\end{figure}

\begin{lemma}
\label{Le2}Consider (\ref{eq1}). Then $T_{\mu ,\lambda }(\theta _{\mu
,\lambda }^{+})\in (0,\infty ),$ $T_{\mu ,\lambda }^{\prime }(\theta _{\mu
,\lambda }^{+})=-\infty $ and $T_{\mu ,\lambda }(\beta _{\mu ,\lambda
}^{-})=\infty $ for $(\mu ,\lambda )\in \Omega .$
\end{lemma}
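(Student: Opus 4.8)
The plan is to read off all three assertions from the sign and asymptotic structure of $B(\alpha ,u)=F_{\mu ,\lambda }(\alpha )-F_{\mu ,\lambda }(u)=\int _{u}^{\alpha }f_{\mu ,\lambda }(s)\,ds$ supplied by Lemma \ref{LH}. First I would record the shape of $F_{\mu ,\lambda }$: since $F_{\mu ,\lambda }^{\prime }=f_{\mu ,\lambda }$, Lemma \ref{LH}(i) gives that $F_{\mu ,\lambda }$ is decreasing on $(0,\varsigma _{\mu ,\lambda })$ and increasing on $(\varsigma _{\mu ,\lambda },\beta _{\mu ,\lambda })$, while Lemma \ref{LH}(iii) gives $F_{\mu ,\lambda }(0)=F_{\mu ,\lambda }(\theta _{\mu ,\lambda })=0$; hence $F_{\mu ,\lambda }<0$ on $(0,\theta _{\mu ,\lambda })$ and $F_{\mu ,\lambda }>0$ on $(\theta _{\mu ,\lambda },\beta _{\mu ,\lambda })$. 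Consequently $B(\alpha ,u)>0$ for all $u\in [0,\alpha )$ and all $\alpha \in [\theta _{\mu ,\lambda },\beta _{\mu ,\lambda })$, so the integrand of (\ref{T}) is well defined and $T_{\mu ,\lambda }(\alpha )>0$ there. I would then isolate the two degeneracies to be exploited. At the left endpoint, $B(\theta _{\mu ,\lambda },0)=F_{\mu ,\lambda }(\theta _{\mu ,\lambda })=0$ while $\partial _{u}B(\theta _{\mu ,\lambda },u)\big|_{u=0}=-f_{\mu ,\lambda }(0)=\mu >0$ (using $g(0)=0$), so $B(\theta _{\mu ,\lambda },u)\sim \mu u$ as $u\to 0^{+}$, and $B(\alpha ,0)=F_{\mu ,\lambda }(\alpha )\to 0^{+}$ as $\alpha \to \theta _{\mu ,\lambda }^{+}$. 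At the right endpoint, $f_{\mu ,\lambda }(\beta _{\mu ,\lambda })=0$ and $f_{\mu ,\lambda }^{\prime }(\beta _{\mu ,\lambda })=\lambda g^{\prime }(\beta _{\mu ,\lambda })<0$, so $B(\beta _{\mu ,\lambda },u)=\int _{u}^{\beta _{\mu ,\lambda }}f_{\mu ,\lambda }(s)\,ds\sim \tfrac12 |f_{\mu ,\lambda }^{\prime }(\beta _{\mu ,\lambda })|(\beta _{\mu ,\lambda }-u)^{2}$ as $u\to \beta _{\mu ,\lambda }^{-}$, and in fact $B(\alpha ,u)\le \tfrac{C}{2}(\beta _{\mu ,\lambda }-u)^{2}$ near $\beta _{\mu ,\lambda }$ uniformly in $\alpha <\beta _{\mu ,\lambda }$.

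For $T_{\mu ,\lambda }(\theta _{\mu ,\lambda }^{+})\in (0,\infty )$: at $\alpha =\theta _{\mu ,\lambda }$ the integrand of (\ref{T}) behaves like $(2B)^{-1/2}\sim (2\mu u)^{-1/2}$ near $u=0$ and like $(2f_{\mu ,\lambda }(\theta _{\mu ,\lambda })(\theta _{\mu ,\lambda }-u))^{-1/2}$ near $u=\theta _{\mu ,\lambda }$, both integrable, so the improper integral $T_{\mu ,\lambda }(\theta _{\mu ,\lambda })$ converges to a finite positive number. I would then identify $T_{\mu ,\lambda }(\theta _{\mu ,\lambda }^{+})$ with this value by passing to the rescaled form $\int _{0}^{1}\alpha [B(\alpha ,\alpha t)+1](B^{2}(\alpha ,\alpha t)+2B(\alpha ,\alpha t))^{-1/2}\,dt$ in (\ref{T}) and applying dominated convergence on $[0,1]$, with a dominating function obtained from a lower bound $B(\alpha ,\alpha t)\ge c\,\alpha t(1-t)$ valid for $\alpha $ in a right-neighbourhood of $\theta _{\mu ,\lambda }$ (such a bound following from the shape of $F_{\mu ,\lambda }$ recorded above together with the two endpoint asymptotics).

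The delicate assertion is $T_{\mu ,\lambda }^{\prime }(\theta _{\mu ,\lambda }^{+})=-\infty $. Here I would differentiate the rescaled form under the integral sign — the substitution $u=\alpha t$ being used precisely to remove the moving endpoint singularity at $u=\alpha $, which otherwise produces an $\infty -\infty $ in $T_{\mu ,\lambda }^{\prime }$. Using $\partial _{B}\big[(B+1)(B^{2}+2B)^{-1/2}\big]=-(B^{2}+2B)^{-3/2}$ and $\partial _{\alpha }B(\alpha ,\alpha t)=f_{\mu ,\lambda }(\alpha )-t\,f_{\mu ,\lambda }(\alpha t)$, the derivative becomes
\begin{equation*}
T_{\mu ,\lambda }^{\prime }(\alpha )=\int _{0}^{1}\left[\frac{B(\alpha ,\alpha t)+1}{\sqrt{B^{2}(\alpha ,\alpha t)+2B(\alpha ,\alpha t)}}-\frac{\alpha \big(f_{\mu ,\lambda }(\alpha )-t\,f_{\mu ,\lambda }(\alpha t)\big)}{\big(B^{2}(\alpha ,\alpha t)+2B(\alpha ,\alpha t)\big)^{3/2}}\right]dt .
\end{equation*}
I would split this at a small fixed $t=\delta $. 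On $[\delta ,1]$ the integrand is dominated, uniformly for $\alpha $ near $\theta _{\mu ,\lambda }$, by a fixed integrable function: near $t=1$ the factor $f_{\mu ,\lambda }(\alpha )-t f_{\mu ,\lambda }(\alpha t)$ vanishes to first order, which, against the $(1-t)^{-3/2}$ from the denominator, leaves an integrable $(1-t)^{-1/2}$; so $\int _{\delta }^{1}$ has a finite limit. On $[0,\delta ]$ the second term dominates: with $B(\alpha ,\alpha t)\approx F_{\mu ,\lambda }(\alpha )+\mu \alpha t$ and $f_{\mu ,\lambda }(\alpha )-t f_{\mu ,\lambda }(\alpha t)\to f_{\mu ,\lambda }(\alpha )$, the $[0,\delta ]$ contribution is comparable to $-c\,f_{\mu ,\lambda }(\alpha )\,F_{\mu ,\lambda }(\alpha )^{-1/2}$, and since $F_{\mu ,\lambda }(\theta _{\mu ,\lambda })=0$ while $f_{\mu ,\lambda }(\theta _{\mu ,\lambda })>0$ this tends to $-\infty $; hence $T_{\mu ,\lambda }^{\prime }(\theta _{\mu ,\lambda }^{+})=-\infty $. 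Making the $\approx$ and $\le$ steps rigorous — in particular, showing that the $t=0$ singular contribution genuinely swamps every bounded term produced by differentiation, uniformly on the relevant range of $\alpha $ — is the main technical obstacle, and is where I expect to lean on the one-sided estimates for $B$ from the first step together with the elementary bound $(B^{2}+2B)^{-3/2}\le (B^{2}+2B)^{-1/2}(2B)^{-1}$.

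Finally, for $T_{\mu ,\lambda }(\beta _{\mu ,\lambda }^{-})=\infty $: from $B>0$ and the uniform bound $B(\alpha ,u)\le F_{\mu ,\lambda }(\alpha )-F_{\mu ,\lambda }(\varsigma _{\mu ,\lambda })\le F_{\mu ,\lambda }(\beta _{\mu ,\lambda })-F_{\mu ,\lambda }(\varsigma _{\mu ,\lambda })=:M$, I get
\begin{equation*}
T_{\mu ,\lambda }(\alpha )\ge \int _{0}^{\alpha }\frac{du}{\sqrt{B^{2}(\alpha ,u)+2B(\alpha ,u)}}\ge \frac{1}{\sqrt{M+2}}\int _{0}^{\alpha }\frac{du}{\sqrt{B(\alpha ,u)}} .
\end{equation*}
Using $f_{\mu ,\lambda }^{\prime }(\beta _{\mu ,\lambda })<0$ to obtain $f_{\mu ,\lambda }(s)\le C(\beta _{\mu ,\lambda }-s)$ on $(\beta _{\mu ,\lambda }-\varepsilon ,\beta _{\mu ,\lambda })$, hence $B(\alpha ,u)=\int _{u}^{\alpha }f_{\mu ,\lambda }(s)\,ds\le \tfrac{C}{2}(\beta _{\mu ,\lambda }-u)^{2}$ for $\beta _{\mu ,\lambda }-\varepsilon <u<\alpha <\beta _{\mu ,\lambda }$, the integrand is bounded below on $(\beta _{\mu ,\lambda }-\varepsilon ,\alpha )$ by a constant multiple of $(\beta _{\mu ,\lambda }-u)^{-1}$, so $T_{\mu ,\lambda }(\alpha )\ge \text{const}\cdot\big[\ln \varepsilon -\ln (\beta _{\mu ,\lambda }-\alpha )\big]\to \infty $ as $\alpha \to \beta _{\mu ,\lambda }^{-}$. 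This establishes the three claims.
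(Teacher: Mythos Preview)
Your proposal is correct but takes a genuinely different route from the paper. The paper's proof is essentially a two-line citation: it checks that $\lim_{u\to 0^{+}} f_{\mu ,\lambda }(u)/\sqrt{u}=-\infty$ and $\lim_{u\to 0^{+}} u^{1/3}f_{\mu ,\lambda }(u)=0$ (both immediate from $f_{\mu ,\lambda }(0)=-\mu <0$) and then invokes general time-map lemmas from \cite[Lemmas 4.1, 4.2]{Huang1} and \cite[Lemma 4.2]{Huang2}, which deliver all three conclusions under exactly these hypotheses. You instead work everything out from scratch, reading off the endpoint asymptotics of $B(\alpha ,u)$ directly from the shape of $F_{\mu ,\lambda }$ and comparing with explicit model integrals. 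The paper's approach is essentially free once the cited lemmas are in hand; yours is self-contained and makes the mechanisms transparent --- in particular your identification of the blow-up $T_{\mu ,\lambda }^{\prime }(\alpha )\sim -c\,f_{\mu ,\lambda }(\alpha )\,F_{\mu ,\lambda }(\alpha )^{-1/2}$ as $\alpha \to \theta _{\mu ,\lambda }^{+}$ is precisely what those cited lemmas encapsulate. Two places where you should tighten the argument: (i) differentiation under the integral sign in the rescaled form needs an integrable majorant for the $\alpha$-derivative on compacta, which your lower bound $B(\alpha ,\alpha t)\ge c\,\alpha t(1-t)$ supplies once stated; (ii) the ``$\approx$'' step on $[0,\delta ]$ replaces $B$ by its linearisation $F_{\mu ,\lambda }(\alpha )+\mu \alpha t$ inside a $-3/2$ power, and to make this rigorous you want the two-sided bound $c_{1}(F_{\mu ,\lambda }(\alpha )+\mu \alpha t)\le B(\alpha ,\alpha t)\le c_{2}(F_{\mu ,\lambda }(\alpha )+\mu \alpha t)$ on $[0,\delta ]$, which follows from $f_{\mu ,\lambda }(0)=-\mu$ and continuity.
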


\begin{proof}
It is easy to compute that%
\begin{equation*}
\lim_{u\rightarrow 0^{+}}\frac{f_{\mu ,\lambda }(u)}{\sqrt{u}}=-\infty \text{
\ and \ }\lim_{u\rightarrow 0^{+}}u^{\frac{1}{3}}f_{\mu ,\lambda }(u)=0.
\end{equation*}%
The proof is complete by \cite[Lemmas 4.1 and 4.2]{Huang1} and \cite[Lemma
4.2]{Huang2}.
\end{proof}

\begin{lemma}
\label{Le3}Consider (\ref{eq1}). For $(\mu ,\lambda )\in \Omega $, there
exists $\tilde{\alpha}_{\mu ,\lambda }\in (\theta _{\mu ,\lambda },\beta
_{\mu ,\lambda })$ such that%
\begin{equation}
T_{\mu ,\lambda }^{\prime }(\alpha )\left\{
\begin{array}{ll}
<0 & \text{for }\theta _{\mu ,\lambda }<\alpha <\tilde{\alpha}_{\mu ,\lambda
},\smallskip \\
=0 & \text{for }\alpha =\tilde{\alpha}_{\mu ,\lambda },\smallskip \\
>0 & \text{for }\tilde{\alpha}_{\mu ,\lambda }<\alpha <\beta _{\mu ,\lambda
},%
\end{array}%
\right. \text{ \ and \ }T_{\mu ,\lambda }^{\prime \prime }(\tilde{\alpha}%
_{\mu ,\lambda })>0.  \label{CS}
\end{equation}
\end{lemma}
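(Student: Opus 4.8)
The plan is to reduce the statement to a single quantitative fact — that $T''_{\mu,\lambda}(\alpha)>0$ at every interior critical point $\alpha$ of $T_{\mu,\lambda}$ — and then deduce (\ref{CS}) from it together with the endpoint behaviour recorded in Lemma \ref{Le2}. Throughout I fix $(\mu,\lambda)\in\Omega$ and abbreviate $T=T_{\mu,\lambda}$, $f=f_{\mu,\lambda}$, $F=F_{\mu,\lambda}$, $\theta=\theta_{\mu,\lambda}$, $\beta=\beta_{\mu,\lambda}$. By Lemma \ref{LH}(i) and Lemma \ref{Le1}(i) one has $[\theta,\beta)\subset(0,\sigma)$ and $f>0$ on $[\theta,\beta)$; as remarked in Section~3, $T$ is then finite and twice continuously differentiable on $(\theta,\beta)$.

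First I would show that at least one interior critical point exists. By Lemma \ref{Le2}, $T(\theta^{+})\in(0,\infty)$, $T'(\theta^{+})=-\infty$ and $T(\beta^{-})=\infty$, so $T$ decreases strictly on some interval $(\theta,\theta+\varepsilon)$ yet blows up at $\beta^{-}$. Picking $a\in(\theta,\theta+\varepsilon)$ and then $b\in(a,\beta)$ with $T(b)>T(a)$, the continuous function $T$ attains its minimum over $[a,b]$ at an interior point $\tilde\alpha_{\mu,\lambda}$ — its values at $a$ and $b$ both strictly exceed that minimum, since $T$ decreases strictly on a right-neighbourhood of $\theta$ and $T(b)>T(a)$ — whence $T'(\tilde\alpha_{\mu,\lambda})=0$.

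The hard part, and the technical heart of the lemma, is to prove that $T''(\alpha)>0$ whenever $\alpha\in(\theta,\beta)$ satisfies $T'(\alpha)=0$. Here I would argue as in the time-map analysis of \cite{Huang1,Huang2}: after the substitution $u=\alpha t$ in (\ref{T}), write $T(\alpha)=\int_{0}^{1}\alpha\,h\!\big(B(\alpha,\alpha t)\big)\,dt$ with $h(s)=(s+1)(s^{2}+2s)^{-1/2}$, so that $h'(s)=-(s^{2}+2s)^{-3/2}$ and $h''(s)=3(s+1)(s^{2}+2s)^{-5/2}$; since $f(\alpha)>0$ on $[\theta,\beta)$, the integrand and its first two $\alpha$-derivatives are bounded near $t=1$ by a constant multiple of $(1-t)^{-1/2}$, which legitimises differentiating twice under the integral sign and yields integral formulas for $T'(\alpha)$ and $T''(\alpha)$ involving $B(\alpha,\alpha t)$, $f(\alpha)-tf(\alpha t)$ and $f'(\alpha)-t^{2}f'(\alpha t)$. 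At a point with $T'(\alpha)=0$ one uses this relation to cancel the sign-indefinite part of $T''(\alpha)$, leaving an integral whose integrand I expect to be controlled by the concavity of $f$ on $(0,\sigma)$ — that is, by $g''<0$ on $(0,\sigma)$, which also gives $(g(u)/u)'<0$ and hence the strict monotonicity of $u\mapsto 2G(u)-ug(u)$ already exploited in the proof of Lemma \ref{Le0}. Verifying that this remaining integrand is nonnegative, and strictly positive on a set of positive measure, so as to conclude $T''(\alpha)>0$, is the step I expect to be genuinely delicate; the rest is soft.

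Granting this fact, the conclusion is routine. It says every interior critical point of $T$ is a strict local minimum, so if there were two, say $\alpha_{1}<\alpha_{2}$, then $T'>0$ just to the right of $\alpha_{1}$ and $T'<0$ just to the left of $\alpha_{2}$; letting $\alpha_{3}$ be the smallest zero of $T'$ in $(\alpha_{1},\alpha_{2}]$ — it exists, lies strictly above $\alpha_{1}$, and satisfies $T'>0$ on $(\alpha_{1},\alpha_{3})$ — we see that $T'$ does not change from negative to positive at $\alpha_{3}$, so $\alpha_{3}$ is not a strict local minimum, a contradiction. Hence $\tilde\alpha_{\mu,\lambda}$ is the only interior critical point and $T''(\tilde\alpha_{\mu,\lambda})>0$; and since $T'(\theta^{+})=-\infty$ while $\tilde\alpha_{\mu,\lambda}$ is the only zero of $T'$, we get $T'<0$ on $(\theta,\tilde\alpha_{\mu,\lambda})$ and, $T'$ being positive just to the right of the strict minimum $\tilde\alpha_{\mu,\lambda}$, $T'>0$ on $(\tilde\alpha_{\mu,\lambda},\beta)$. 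This is exactly (\ref{CS}).
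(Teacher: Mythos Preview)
Your proposal is correct and follows essentially the same skeleton as the paper: existence of an interior critical point from Lemma~\ref{Le2}, the key fact that $T''(\alpha)>0$ at every critical point, and then uniqueness and the sign pattern of $T'$. The one place where the paper is much more efficient is the ``hard part'' you flag: rather than computing $T''$ directly and trying to control the sign of the resulting integrand, the paper invokes \cite[Lemma~4.7]{Huang1}, which from the concavity $f_{\mu,\lambda}''=\lambda g''<0$ on $(0,\sigma)$ yields the uniform differential inequality
\[
T_{\mu,\lambda}''(\alpha)+\frac{2}{\alpha}\,T_{\mu,\lambda}'(\alpha)>0\quad\text{for }\theta_{\mu,\lambda}<\alpha<\beta_{\mu,\lambda},
\]
so that $T''(\alpha)>0$ is immediate at any critical point. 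This replaces your delicate integral-sign analysis with a single citation; if you intend a self-contained argument, your outlined computation would essentially be reproving that lemma.
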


\begin{proof}
Let $(\mu ,\lambda )\in \Omega $ be given. By Lemma \ref{Le2}, $T_{\mu
,\lambda }(\alpha )$ has at least one critical number, a local minimum, on $%
(\theta _{\mu ,\lambda },\beta _{\mu ,\lambda })$. Since $f_{\mu ,\lambda
}^{\prime \prime }(u)=\lambda g^{\prime \prime }(u)<0$ for $0<u<\sigma $,
and by \cite[Lemma 4.7]{Huang1}, we obtain%
\begin{equation*}
T_{\mu ,\lambda }^{\prime \prime }(\alpha )+\frac{2}{\alpha }T_{\mu ,\lambda
}^{\prime }(\alpha )>0\text{ \ for }\theta _{\mu ,\lambda }<\alpha <\beta
_{\mu ,\lambda }\text{,}
\end{equation*}%
from which it follows that $T_{\mu ,\lambda }^{\prime \prime }(\alpha )>0$
for any critical number $\alpha \in (\theta _{\mu ,\lambda },\beta _{\mu
,\lambda })$. Consequently, $T_{\mu ,\lambda }(\alpha )$ has exactly one
critical number $\tilde{\alpha}_{\mu ,\lambda }$, a local minimum, on $%
(\theta _{\mu ,\lambda },\beta _{\mu ,\lambda })$, and $T_{\mu ,\lambda
}^{\prime \prime }(\tilde{\alpha}_{\mu ,\lambda })>0$. The proof is complete.
\end{proof}

\begin{lemma}
\label{Le4}Consider (\ref{eq1}). Let $\eta $ be defined in (\ref{E}). Then
the following statements (i)--(ii) hold.

\begin{itemize}
\item[(i)] For any $\mu >0,$

\begin{itemize}
\item[(a)] $T_{\mu ,\lambda }(\theta _{\mu ,\lambda })$ is a continuous
function with respect to $\lambda \in (\lambda _{\mu },\infty )$. Moreover,%
\begin{equation*}
\lim_{\lambda \rightarrow \lambda _{\mu }^{+}}T_{\mu ,\lambda }(\theta _{\mu
,\lambda })=\infty \text{ \ and \ }\lim_{\lambda \rightarrow \infty }T_{\mu
,\lambda }(\theta _{\mu ,\lambda })=0.
\end{equation*}

\item[(b)] $T_{\mu ,\lambda }(\tilde{\alpha}_{\mu ,\lambda })$ is a strictly
decreasing and continuous function with respect to $\lambda \in (\lambda
_{\mu },\infty )$. Moreover,%
\begin{equation*}
\lim_{\lambda \rightarrow \lambda _{\mu }^{+}}T_{\mu ,\lambda }(\tilde{\alpha%
}_{\mu ,\lambda })=\infty \text{ \ and \ }\lim_{\lambda \rightarrow \infty
}T_{\mu ,\lambda }(\tilde{\alpha}_{\mu ,\lambda })=0.
\end{equation*}
\end{itemize}

\item[(ii)] For any $\lambda >0,$

\begin{itemize}
\item[(a)] $T_{\mu ,\lambda }(\theta _{\mu ,\lambda })$ is a continuous
function with respect to $\mu \in (0,\mu _{\lambda })$. Moreover,%
\begin{equation*}
\lim_{\mu \rightarrow 0^{+}}T_{\mu ,\lambda }(\theta _{\mu ,\lambda })=2\eta
\text{ \ and \ }\lim_{\mu \rightarrow \mu _{\lambda }^{-}}T_{\mu ,\lambda
}(\theta _{\mu ,\lambda })=\infty .
\end{equation*}

\item[(b)] $T_{\mu ,\lambda }(\tilde{\alpha}_{\mu ,\lambda })$ is a strictly
increasing and continuous function with respect to $\mu \in (0,\mu _{\lambda
})$. Moreover,
\begin{equation*}
\lim_{\mu \rightarrow 0^{+}}T_{\mu ,\lambda }(\tilde{\alpha}_{\mu ,\lambda
})=\eta \text{ \ and \ }\lim_{\mu \rightarrow \mu _{\lambda }^{-}}T_{\mu
,\lambda }(\tilde{\alpha}_{\mu ,\lambda })=\infty .
\end{equation*}
\end{itemize}

\noindent See Figure \ref{fig8}.
\end{itemize}
\end{lemma}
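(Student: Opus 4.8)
The plan is to prove the four assertions in parallel, exploiting that (i)(a) and (ii)(a) both concern the endpoint value $T_{\mu ,\lambda }(\theta _{\mu ,\lambda })$, that (i)(b) and (ii)(b) both concern the minimum value $T_{\mu ,\lambda }(\tilde{\alpha}_{\mu ,\lambda })$, and that within each pair $\lambda $ and $\mu $ enter $F_{\mu ,\lambda }$ in a symmetric fashion. Throughout write $h(B)\equiv (B+1)/\sqrt{B^{2}+2B}$, so $h^{\prime }(B)=-(B^{2}+2B)^{-3/2}<0$, and recall $B(\alpha ,u)=F_{\mu ,\lambda }(\alpha )-F_{\mu ,\lambda }(u)$ with $\partial _{\lambda }B=G(\alpha )-G(u)>0$ and $\partial _{\mu }B=-(\alpha -u)<0$ for $0<u<\alpha $. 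Continuity of $T_{\mu ,\lambda }(\theta _{\mu ,\lambda })$ in $\lambda $ on $(\lambda _{\mu },\infty )$ (resp.\ in $\mu $ on $(0,\mu _{\lambda })$) follows from Lemma \ref{Le1} (continuity of $\theta _{\mu ,\lambda }$), Lemma \ref{Le2} (finiteness of $T_{\mu ,\lambda }(\theta _{\mu ,\lambda }^{+})$), and the continuity of the time map; likewise $\tilde{\alpha}_{\mu ,\lambda }$ is $C^{1}$ in $\lambda $ and in $\mu $ by the implicit function theorem applied to $\partial _{\alpha }T_{\mu ,\lambda }=0$ using $\partial _{\alpha }^{2}T_{\mu ,\lambda }(\tilde{\alpha}_{\mu ,\lambda })>0$ from Lemma \ref{Le3}, so $T_{\mu ,\lambda }(\tilde{\alpha}_{\mu ,\lambda })$ is continuous. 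For the monotonicity in (i)(b), (ii)(b) I would differentiate and use the envelope identity $\frac{d}{d\lambda }T_{\mu ,\lambda }(\tilde{\alpha}_{\mu ,\lambda })=\partial _{\lambda }T_{\mu ,\lambda }(\tilde{\alpha}_{\mu ,\lambda })$ (the $\partial _{\alpha }$-term dropping out at the critical point); since $\partial _{\lambda }h(B)=h^{\prime }(B)\,\partial _{\lambda }B<0$ and $\partial _{\mu }h(B)=h^{\prime }(B)\,\partial _{\mu }B>0$ pointwise in $u\in (0,\alpha )$, we get $\partial _{\lambda }T_{\mu ,\lambda }<0$ and $\partial _{\mu }T_{\mu ,\lambda }>0$ (the latter is exactly the formula $(\ref{Le4b})$), hence $T_{\mu ,\lambda }(\tilde{\alpha}_{\mu ,\lambda })$ is strictly decreasing in $\lambda $ and strictly increasing in $\mu $.

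For the limits equal to $\infty $ — $\lambda \to \lambda _{\mu }^{+}$ in (i) and $\mu \to \mu _{\lambda }^{-}$ in (ii) — the mechanism is the same in all four cases. By Lemma \ref{Le1}, $\theta _{\mu ,\lambda }$ and $\beta _{\mu ,\lambda }$ both converge monotonically to $c^{\ast }$, so $\tilde{\alpha}_{\mu ,\lambda }\to c^{\ast }$ as well, and on $(0,\theta _{\mu ,\lambda })$ we have $B(\theta _{\mu ,\lambda },u)=-F_{\mu ,\lambda }(u)\ge 0$. At the limiting parameter (say $\lambda =\lambda _{\mu }$), $F_{\mu ,\lambda _{\mu }}(c^{\ast })=0$ and $F_{\mu ,\lambda _{\mu }}^{\prime }(c^{\ast })=f_{\mu ,\lambda _{\mu }}(c^{\ast })=0$ because $c^{\ast }=\beta _{\mu ,\lambda _{\mu }}$ is a zero of $f_{\mu ,\lambda _{\mu }}$, while $F_{\mu ,\lambda _{\mu }}^{\prime \prime }(c^{\ast })=\lambda _{\mu }g^{\prime }(c^{\ast })<0$; hence $-F_{\mu ,\lambda _{\mu }}(u)\sim \tfrac{1}{2}\lambda _{\mu }|g^{\prime }(c^{\ast })|(c^{\ast }-u)^{2}$ as $u\to c^{\ast -}$, so the limiting integrand $h(-F_{\mu ,\lambda _{\mu }}(u))$ is asymptotic to a constant times $(c^{\ast }-u)^{-1}$ near $c^{\ast }$ and is not integrable there. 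Applying Fatou's lemma on $(0,c^{\prime })$ for each fixed $c^{\prime }<c^{\ast }$ (where the integrands are positive) and then letting $c^{\prime }\to c^{\ast -}$ forces $\liminf T_{\mu ,\lambda }(\theta _{\mu ,\lambda })=+\infty $; since $\tilde{\alpha}_{\mu ,\lambda }\to c^{\ast }$, the identical estimate gives $\lim T_{\mu ,\lambda }(\tilde{\alpha}_{\mu ,\lambda })=+\infty $, and the case $\mu \to \mu _{\lambda }^{-}$ is symmetric.

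The remaining finite limits use $\theta _{\mu ,\lambda }\to 0$ (Lemma \ref{Le1}) together with the scaled form
\begin{equation*}
T_{\mu ,\lambda }(\theta _{\mu ,\lambda })=\int_{0}^{1}\theta _{\mu ,\lambda }\,h\big(B(\theta _{\mu ,\lambda },\theta _{\mu ,\lambda }t)\big)\,dt,\qquad B(\theta _{\mu ,\lambda },\theta _{\mu ,\lambda }t)=\mu \theta _{\mu ,\lambda }t-\lambda G(\theta _{\mu ,\lambda }t),
\end{equation*}
and the normalization $\lambda G(\theta _{\mu ,\lambda })=\mu \theta _{\mu ,\lambda }$. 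Using the leading-order expansion of $G$ at $0$ — quadratic when $g^{\prime }(0^{+})\in (0,\infty )$, the relevant power when $g^{\prime }(0^{+})=\infty $ — one obtains a uniform-in-$t$ two-sided estimate $B(\theta _{\mu ,\lambda },\theta _{\mu ,\lambda }t)\asymp \mu \theta _{\mu ,\lambda }\,t(1-t)$ (respectively $t(1-t^{p})$) with $B\to 0$, whence $\theta _{\mu ,\lambda }\,h(B)\asymp \sqrt{\theta _{\mu ,\lambda }/\mu }\,/\sqrt{t(1-t)}$. As $\lambda \to \infty $ (with $\mu $ fixed), $\sqrt{\theta _{\mu ,\lambda }/\mu }\to 0$, so by dominated convergence $T_{\mu ,\lambda }(\theta _{\mu ,\lambda })\to 0$, proving the second limit in (i)(a); as $\mu \to 0^{+}$ (with $\lambda $ fixed, $g^{\prime }(0^{+})<\infty $), $\theta _{\mu ,\lambda }/\mu \to 2/(\lambda g^{\prime }(0^{+}))$, so $T_{\mu ,\lambda }(\theta _{\mu ,\lambda })\to (\lambda g^{\prime }(0^{+}))^{-1/2}\int_{0}^{1}(t(1-t))^{-1/2}dt=\pi /\sqrt{\lambda g^{\prime }(0^{+})}=2\eta $ (and the value is $0=2\eta $ when $g^{\prime }(0^{+})=\infty $), proving the first limit in (ii)(a). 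The two remaining limits for $T_{\mu ,\lambda }(\tilde{\alpha}_{\mu ,\lambda })$ then follow by comparison: $0<T_{\mu ,\lambda }(\tilde{\alpha}_{\mu ,\lambda })<T_{\mu ,\lambda }(\theta _{\mu ,\lambda }^{+})\to 0$ gives the second limit in (i)(b), and for (ii)(b) one sandwiches via Lemma \ref{Le0}(ii), namely $\eta =T_{0,\lambda }(0^{+})\le T_{0,\lambda }(\tilde{\alpha}_{\mu ,\lambda })<T_{\mu ,\lambda }(\tilde{\alpha}_{\mu ,\lambda })\le T_{\mu ,\lambda }(\alpha _{0})\to T_{0,\lambda }(\alpha _{0})$ for any small fixed $\alpha _{0}\in (0,\sigma )$, then lets $\alpha _{0}\to 0^{+}$.

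The routine parts are the continuity statements and the single differentiation behind the monotonicity. I expect the main obstacle to be the asymptotic bookkeeping for the endpoint limits: for the limits to $\infty $ one must pin down that the limiting integrand has exactly an inverse-linear singularity at $u=c^{\ast }$ — which rests precisely on $f$ vanishing at $\beta =c^{\ast }$ at the critical parameter, so that $F^{\prime }(c^{\ast })=0$ — and then justify passing the limit under the integral via Fatou; for the limit to $2\eta $ one must establish the two-sided bound on $B(\theta _{\mu ,\lambda },\theta _{\mu ,\lambda }t)$ uniformly in $t\in (0,1)$ and treat the cases $g^{\prime }(0^{+})\in (0,\infty )$ and $g^{\prime }(0^{+})=\infty $ separately.
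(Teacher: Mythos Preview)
Your proposal is correct and follows essentially the same route as the paper: continuity via Lemma~\ref{Le1} and the implicit function theorem (using $T_{\mu,\lambda}''(\tilde\alpha_{\mu,\lambda})>0$); monotonicity via the envelope identity together with the sign of $\partial_\lambda T$ and $\partial_\mu T$; the limits to $\infty$ via the quadratic vanishing of $B$ at $c^{\ast}$ (you do this in the $u$-variable with a Taylor expansion of $F_{\mu,\lambda_\mu}$ and Fatou, the paper does it after the substitution $u=\alpha t$ and bounds $E_t(c^{\ast})$ by $\bar M(1-t)^2$, but the mechanism is identical); and the finite limits via the scaled integral over $t\in(0,1)$ and the asymptotics of $B(\theta,\theta t)/\theta^2$, followed by the same sandwich with $T_{0,\lambda}$ for $T_{\mu,\lambda}(\tilde\alpha_{\mu,\lambda})$. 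The only point where the paper is slightly cleaner is the $g'(0^+)=\infty$ case: rather than invoking a ``relevant power'', it handles both cases at once by showing $\lim B(\theta,\theta t)/\theta^2=\tfrac{\lambda t(1-t)}{2}g'(0^+)$ via the mean value theorem, which is exactly what you would need to make your two-sided $\asymp$ bound rigorous without assuming a power-law form for $g$ near $0$.
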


\begin{proof}
(I) Let $\mu >0$ be given. We divide the proof of the statement (i) into the
following three steps.

\smallskip \textbf{Step 1}. We prove that $T_{\mu ,\lambda }(\theta _{\mu
,\lambda })$ is continuous, and $T_{\mu ,\lambda }(\tilde{\alpha}_{\mu
,\lambda })$ is strictly decreasing and continuous with respect to $\lambda
\in (\lambda _{\mu },\infty )$. By Lemma \ref{Le1}, it is easy to see that $%
T_{\mu ,\lambda }(\theta _{\mu ,\lambda })$ is continuous with respect to $%
\lambda \in (\lambda _{\mu },\infty )$. By (\ref{CS}) and implicit function
theorem, $\tilde{\alpha}_{\mu ,\lambda }$ is continuously differentiable
with respect to $\lambda \in (\lambda _{\mu },\infty )$. Consequently, $%
T_{\mu ,\lambda }(\tilde{\alpha}_{\mu ,\lambda })$ is also continuously
differentiable with respect to $\lambda \in (\lambda _{\mu },\infty )$.
Since $G^{\prime }(u)=g(u)>0$ on $\left( 0,\sigma \right) $, we see that%
\begin{equation}
\frac{\partial }{\partial \lambda }T_{\mu ,\lambda }(\alpha
)=\int_{0}^{\alpha }\frac{-\left[ G(\alpha )-G(u)\right] }{\left[ B(\alpha
,u)+2B(\alpha ,u)\right] ^{3/2}}du<0\text{,}  \label{Le4a}
\end{equation}%
\ for $\theta _{\mu ,\lambda }\leq \alpha <\beta _{\mu ,\lambda }$ and $%
\lambda >\lambda _{\mu }$, from which it follows that%
\begin{eqnarray}
\frac{\partial }{\partial \lambda }T_{\mu ,\lambda }(\tilde{\alpha}_{\mu
,\lambda }) &=&T_{\mu ,\lambda }^{\prime }(\tilde{\alpha}_{\mu ,\lambda })%
\frac{\partial \tilde{\alpha}_{\mu ,\lambda }}{\partial \lambda }+\left[
\frac{\partial }{\partial \lambda }T_{\mu ,\lambda }(\alpha )\right]
_{\alpha =\tilde{\alpha}_{\mu ,\lambda }}  \notag \\
&=&\left[ \frac{\partial }{\partial \lambda }T_{\mu ,\lambda }(\alpha )%
\right] _{\alpha =\tilde{\alpha}_{\mu ,\lambda }}<0\text{ \ for }\lambda
>\lambda _{\mu }.  \label{Le4e}
\end{eqnarray}%
Then $T_{\mu ,\lambda }(\tilde{\alpha}_{\mu ,\lambda })$ is strictly
decreasing with respect to $\lambda \in (\lambda _{\mu },\infty )$.

\smallskip \textbf{Step 2.} We prove that $\lim\limits_{\lambda \rightarrow
\infty }T_{\mu ,\lambda }(\theta _{\mu ,\lambda })=\lim\limits_{\lambda
\rightarrow \infty }T_{\mu ,\lambda }(\tilde{\alpha}_{\mu ,\lambda })=0$.
Since $F_{\mu ,\lambda }(\theta _{\mu ,\lambda })=0$ for $\lambda >\lambda
_{\mu }$, we have%
\begin{equation}
\lambda G(\theta _{\mu ,\lambda })=\mu \theta _{\mu ,\lambda }\text{ \ for }%
\lambda >\lambda _{\mu },  \label{Le4j}
\end{equation}%
from which it follows that%
\begin{equation}
B(\theta _{\mu ,\lambda },\theta _{\mu ,\lambda }t)=-F_{\mu ,\lambda
}(\theta _{\mu ,\lambda }t)=\mu \theta _{\mu ,\lambda }t-\lambda G(\theta
_{\mu ,\lambda }t)=\lambda E_{t}(\theta _{\mu ,\lambda }),  \label{t2}
\end{equation}%
for $0<t<1$, where%
\begin{equation}
E_{t}(\alpha )\equiv G(\alpha )t-G(\alpha t).  \label{ET}
\end{equation}%
By Lemma \ref{LH}(i), we observe that%
\begin{equation*}
B(\alpha ,0)=B(\alpha ,\alpha )=0
\end{equation*}%
and%
\begin{equation*}
\frac{\partial }{\partial u}B(\alpha ,u)=-f_{\mu ,\lambda }(u)\left\{
\begin{array}{ll}
>0 & \text{for }0<u<\varsigma _{\mu ,\lambda }, \\
=0 & \text{for }u=\varsigma _{\mu ,\lambda }, \\
<0 & \text{for }\varsigma _{\mu ,\lambda }<u<\beta _{\mu ,\lambda }.%
\end{array}%
\right.
\end{equation*}%
Then we obtain that
\begin{equation}
B(\alpha ,u)>0\text{ \ for }0<u<\alpha \text{ and }\theta _{\mu ,\lambda
}\leq \alpha <\beta _{\mu ,\lambda }.  \label{B}
\end{equation}%
Since $G(u)>0$ on $\left( 0,\sigma \right) $, and by (\ref{t2}), (\ref{B})
and Lemma \ref{Le1}, we obtain%
\begin{equation*}
0\leq \lim_{\lambda \rightarrow \infty }B(\theta _{\mu ,\lambda },\theta
_{\mu ,\lambda }t)\leq \lim_{\lambda \rightarrow \infty }\mu \theta _{\mu
,\lambda }t=0,
\end{equation*}%
which implies that%
\begin{equation}
\lim_{\lambda \rightarrow \infty }B(\theta _{\mu ,\lambda },\theta _{\mu
,\lambda }t)=0\text{ \ for }0<t<1.  \label{Le4g}
\end{equation}%
By Lemma \ref{Le1}, L'H\^{o}pital's rule and Mean-value theorem, we observe
that, for $0<t<1$,%
\begin{eqnarray*}
\lim_{\lambda \rightarrow \infty }\frac{E_{t}(\theta _{\mu ,\lambda })}{%
\theta _{\mu ,\lambda }^{2}} &=&\lim_{v\rightarrow 0^{+}}\frac{G(v)t-G(vt)}{%
v^{2}}=\lim_{v\rightarrow 0^{+}}\frac{t\left( 1-t\right) }{2}\frac{g(v)-g(vt)%
}{v\left( 1-t\right) } \\
&=&\lim_{v\rightarrow 0^{+}}\frac{t\left( 1-t\right) }{2}g^{\prime }(v_{t})%
\text{ \ for some }v_{t}\in (vt,v) \\
&>&0\text{ \ (since }g^{\prime \prime }(u)<0\text{ on }\left( 0,\sigma
\right) \text{).}
\end{eqnarray*}%
So by (\ref{t2}),%
\begin{equation}
\lim_{\lambda \rightarrow \infty }\frac{B(\theta _{\mu ,\lambda },\theta
_{\mu ,\lambda }t)}{\theta _{\mu ,\lambda }^{2}}=\lim_{\lambda \rightarrow
\infty }\frac{\lambda E_{t}(\theta _{\mu ,\lambda })}{\theta _{\mu ,\lambda
}^{2}}=\infty \text{ \ for }0<t<1.  \label{Le4h}
\end{equation}%
By (\ref{Le4g}) and (\ref{Le4h}), we obtain%
\begin{equation*}
\lim_{\lambda \rightarrow \infty }T_{\mu ,\lambda }(\theta _{\mu ,\lambda
})=\lim_{\lambda \rightarrow \infty }\int_{0}^{1}\frac{B(\theta _{\mu
,\lambda },\theta _{\mu ,\lambda }t)+1}{\sqrt{B(\theta _{\mu ,\lambda
},\theta _{\mu ,\lambda }t)+2}}\frac{\theta _{\mu ,\lambda }}{\sqrt{B(\theta
_{\mu ,\lambda },\theta _{\mu ,\lambda }t)}}dt=0.
\end{equation*}%
So by Lemma \ref{Le3}, we deduce%
\begin{equation*}
0\leq \lim_{\lambda \rightarrow \infty }T_{\mu ,\lambda }(\tilde{\alpha}%
_{\mu ,\lambda })\leq \lim_{\lambda \rightarrow \infty }T_{\mu ,\lambda
}(\theta _{\mu ,\lambda })=0,
\end{equation*}%
which implies that $\lim\limits_{\lambda \rightarrow \infty }T_{\mu ,\lambda
}(\theta _{\mu ,\lambda })=\lim\limits_{\lambda \rightarrow \infty }T_{\mu
,\lambda }(\tilde{\alpha}_{\mu ,\lambda })=0$.

\smallskip \textbf{Step 3.} We prove that $\lim\limits_{\lambda \rightarrow
\lambda _{\mu }^{+}}T_{\mu ,\lambda }(\theta _{\mu ,\lambda
})=\lim\limits_{\lambda \rightarrow \lambda _{\mu }^{+}}T_{\mu ,\lambda }(%
\tilde{\alpha}_{\mu ,\lambda })=\infty $. By Lemma \ref{LH}, we have%
\begin{equation}
\frac{G(c^{\ast })}{c^{\ast }}=g(c^{\ast })\text{ \ and \ }\lambda _{\mu }=%
\frac{\mu }{g(c^{\ast })}=\frac{\mu c^{\ast }}{G(c^{\ast })}.  \label{4a}
\end{equation}%
Since $\theta _{\mu ,\lambda }<\tilde{\alpha}_{\mu ,\lambda }<\beta _{\mu
,\lambda }$ for $\lambda >\lambda _{\mu }$, and by Lemma \ref{Le1}, we
obtain $\lim_{\lambda \rightarrow \lambda _{\mu }^{+}}\tilde{\alpha}_{\mu
,\lambda }=c^{\ast }$, which, by (\ref{4a}), implies that, for $0<t<1,$%
\begin{equation*}
\lim_{\lambda \rightarrow \lambda _{\mu }^{+}}F(\tilde{\alpha}_{\mu ,\lambda
})=\lim_{\lambda \rightarrow \lambda _{\mu }^{+}}\left( \lambda G(\tilde{%
\alpha}_{\mu ,\lambda })-\mu \tilde{\alpha}_{\mu ,\lambda }\right) =\lambda
_{\mu }G(c^{\ast })-\mu c^{\ast }=0
\end{equation*}%
and%
\begin{equation*}
\lim_{\lambda \rightarrow \lambda _{\mu }^{+}}F(\tilde{\alpha}_{\mu ,\lambda
}t)=\lim_{\lambda \rightarrow \lambda _{\mu }^{+}}\left( \lambda G(\tilde{%
\alpha}_{\mu ,\lambda }t)-\mu \tilde{\alpha}_{\mu ,\lambda }t\right) =-\frac{%
c^{\ast }\mu }{G(c^{\ast })}E_{t}(c^{\ast }).
\end{equation*}%
Therefore,%
\begin{equation}
\lim_{\lambda \rightarrow \lambda _{\mu }^{+}}B(\tilde{\alpha}_{\mu ,\lambda
},\tilde{\alpha}_{\mu ,\lambda }t)=\frac{c^{\ast }\mu }{G(c^{\ast })}%
E_{t}(c^{\ast })\text{ \ for }0<t<1.  \label{4f}
\end{equation}%
By L'H\^{o}pital's rule and Lemma \ref{LH}, we see that%
\begin{eqnarray}
\lim_{t\rightarrow 1^{-}}\frac{E_{t}(c^{\ast })}{\left( 1-t\right) ^{2}}
&=&\lim_{t\rightarrow 1^{-}}\frac{G(c^{\ast })-c^{\ast }g(c^{\ast }t)}{%
-2\left( 1-t\right) }=\lim_{t\rightarrow 1^{-}}\frac{-\left( c^{\ast
}\right) ^{2}g^{\prime }(c^{\ast }t)}{2}  \notag \\
&=&\frac{-\left( c^{\ast }\right) ^{2}g^{\prime }(c^{\ast })}{2}\in
(0,\infty ).  \label{4d}
\end{eqnarray}%
By (\ref{4f}) and (\ref{4d}), there exist $\tilde{M}>0$ and $\delta \in
\left( 0,1\right) $ such that%
\begin{equation*}
\lim_{\lambda \rightarrow \lambda _{\mu }^{+}}B(\tilde{\alpha}_{\mu ,\lambda
},\tilde{\alpha}_{\mu ,\lambda }t)<\tilde{M}\left( 1-t\right) ^{2}<1\text{ \
for }\delta <t<1\text{,}
\end{equation*}%
from which it follows that
\begin{equation}
\lim_{\lambda \rightarrow \lambda _{\mu }^{+}}\left[ B^{2}(\tilde{\alpha}%
_{\mu ,\lambda },\tilde{\alpha}_{\mu ,\lambda }t)+2B(\tilde{\alpha}_{\mu
,\lambda },\tilde{\alpha}_{\mu ,\lambda }t)\right] \leq 3\lim_{\lambda
\rightarrow \lambda _{\mu }^{+}}B(\tilde{\alpha}_{\mu ,\lambda },\tilde{%
\alpha}_{\mu ,\lambda }t)\leq 3\tilde{M}\left( 1-t\right) ^{2}.  \label{4c}
\end{equation}%
By (\ref{T}) and (\ref{4c}), we obtain%
\begin{eqnarray}
\lim_{\lambda \rightarrow \lambda _{\mu }^{+}}T_{\mu ,\lambda }(\tilde{\alpha%
}_{\mu ,\lambda }) &=&\lim_{\lambda \rightarrow \lambda _{\mu
}^{+}}\int_{0}^{1}\frac{\tilde{\alpha}_{\mu ,\lambda }\left[ B(\tilde{\alpha}%
_{\mu ,\lambda },\tilde{\alpha}_{\mu ,\lambda }t)+1\right] }{\sqrt{B^{2}(%
\tilde{\alpha}_{\mu ,\lambda },\tilde{\alpha}_{\mu ,\lambda }t)+2B(\tilde{%
\alpha}_{\mu ,\lambda },\tilde{\alpha}_{\mu ,\lambda }t)}}dt  \notag \\
&\geq &\lim_{\lambda \rightarrow \lambda _{\mu }^{+}}\int_{\delta }^{1}\frac{%
\tilde{\alpha}_{\mu ,\lambda }}{\sqrt{B^{2}(\tilde{\alpha}_{\mu ,\lambda },%
\tilde{\alpha}_{\mu ,\lambda }t)+2B(\tilde{\alpha}_{\mu ,\lambda },\tilde{%
\alpha}_{\mu ,\lambda }t)}}dt  \notag \\
&\geq &\frac{c^{\ast }}{\sqrt{3\tilde{M}}}\int_{\delta }^{1}\frac{1}{1-t}%
dt=\infty .  \label{4e}
\end{eqnarray}%
So by Lemma \ref{Le3}, then%
\begin{equation*}
\lim_{\lambda \rightarrow \lambda _{\mu }^{+}}T_{\mu ,\lambda }(\theta _{\mu
,\lambda })\geq \lim_{\lambda \rightarrow \lambda _{\mu }^{+}}T_{\mu
,\lambda }(\tilde{\alpha}_{\mu ,\lambda })=\infty ,
\end{equation*}%
which implies that $\lim\limits_{\lambda \rightarrow \lambda _{\mu
}^{+}}T_{\mu ,\lambda }(\theta _{\mu ,\lambda })=\lim\limits_{\lambda
\rightarrow \lambda _{\mu }^{+}}T_{\mu ,\lambda }(\tilde{\alpha}_{\mu
,\lambda })=\infty .$

\smallskip

(II) Let $\lambda >0$ be given. We divide the proof of the statement (ii)
into the following three steps.

\smallskip \textbf{Step 1}. We prove that $T_{\mu ,\lambda }(\theta _{\mu
,\lambda })$ is continuous, and $T_{\mu ,\lambda }(\tilde{\alpha}_{\mu
,\lambda })$ is strictly increasing and continuous with respect to $\mu \in
(0,\mu _{\lambda })$. By Lemma \ref{Le1}, it is easy to see that $T_{\mu
,\lambda }(\theta _{\mu ,\lambda })$ is continuous with respect to $\mu \in
(0,\mu _{\lambda })$. By (\ref{CS}) and implicit function theorem, $\tilde{%
\alpha}_{\mu ,\lambda }$ is continuously differentiable with respect to $\mu
\in (0,\mu _{\lambda })$. Consequently, $T_{\mu ,\lambda }(\tilde{\alpha}%
_{\mu ,\lambda })$ is also continuously differentiable with respect to $\mu
\in (0,\mu _{\lambda })$. By (\ref{Le4b}), we observe that%
\begin{eqnarray*}
\frac{\partial }{\partial \mu }T_{\mu ,\lambda }(\tilde{\alpha}_{\mu
,\lambda }) &=&T_{\mu ,\lambda }^{\prime }(\tilde{\alpha}_{\mu ,\lambda })%
\frac{\partial \tilde{\alpha}_{\mu ,\lambda }}{\partial \mu }+\left[ \frac{%
\partial }{\partial \mu }T_{\mu ,\lambda }(\alpha )\right] _{\alpha =\tilde{%
\alpha}_{\mu ,\lambda }} \\
&=&\left[ \frac{\partial }{\partial \mu }T_{\mu ,\lambda }(\alpha )\right]
_{\alpha =\tilde{\alpha}_{\mu ,\lambda }}>0\text{ \ for }\mu \in (0,\mu
_{\lambda }),
\end{eqnarray*}%
which implies that $T_{\mu ,\lambda }(\tilde{\alpha}_{\mu ,\lambda })$ is
strictly increasing with respect to $\mu \in (0,\mu _{\lambda })$.

\smallskip \textbf{Step 2.} We prove $\lim\limits_{\mu \rightarrow
0^{+}}T_{\mu ,\lambda }(\theta _{\mu ,\lambda })=2\eta $. By (\ref{0j}) and
Lemma \ref{Le1}, we have%
\begin{equation}
\lim_{\mu \rightarrow 0^{+}}\theta _{\mu ,\lambda }=0\text{ \ and \ }\frac{%
\partial \theta _{\mu ,\lambda }}{\partial \mu }=\frac{\theta _{\mu ,\lambda
}}{f_{\mu ,\lambda }(\theta _{\mu ,\lambda })}=\frac{\theta _{\mu ,\lambda }%
}{\lambda g(\theta _{\mu ,\lambda })-\mu }.  \label{Le4m}
\end{equation}%
It follows that%
\begin{equation}
\lim_{\mu \rightarrow 0^{+}}B(\theta _{\mu ,\lambda },\theta _{\mu ,\lambda
}t)=-\lim_{\mu \rightarrow 0^{+}}F_{\mu ,\lambda }(\theta _{\mu ,\lambda
}t)=\lim_{\mu \rightarrow 0^{+}}\left[ \mu \theta _{\mu ,\lambda }t-\lambda
G(\theta _{\mu ,\lambda }t)\right] =0.  \label{Le4n}
\end{equation}%
By (\ref{Le4n}), L'H\^{o}pital's rule and Mean-value theorem,%
\begin{eqnarray}
\lim_{\mu \rightarrow 0^{+}}\frac{B(\theta _{\mu ,\lambda },\theta _{\mu
,\lambda }t)}{\theta _{\mu ,\lambda }^{2}} &=&\lim_{\mu \rightarrow 0^{+}}%
\frac{\theta _{\mu ,\lambda }t+t\left[ \mu -\lambda g(\theta _{\mu ,\lambda
}t)\right] \frac{\partial \theta _{\mu ,\lambda }}{\partial \mu }}{2\theta
_{\mu ,\lambda }\frac{\partial \theta _{\mu ,\lambda }}{\partial \mu }}
\notag \\
&=&\lim_{\mu \rightarrow 0^{+}}\frac{g(\theta _{\mu ,\lambda })-g(\theta
_{\mu ,\lambda }t)}{2\theta _{\mu ,\lambda }}\lambda t\text{ \ (by (\ref%
{Le4m}))}  \notag \\
&=&\lim_{v\rightarrow 0^{+}}\frac{\lambda t\left( 1-t\right) }{2}\frac{%
g(v)-g(vt)}{v-vt}  \notag \\
&=&\lim_{v\rightarrow 0^{+}}\frac{\lambda t\left( 1-t\right) }{2}g^{\prime
}(v_{t})\text{ \ for some }v_{t}\in (vt,v)  \notag \\
&=&\left\{
\begin{array}{ll}
\infty & \text{if }g^{\prime }(0^{+})=\infty , \\
\frac{\lambda t\left( 1-t\right) }{2}g^{\prime }(0^{+}) & \text{if }%
g^{\prime }(0^{+})\in (0,\infty ).%
\end{array}%
\right.  \label{Le4c}
\end{eqnarray}%
By (\ref{Le4n}) and (\ref{Le4c}), we observe that%
\begin{eqnarray*}
\lim_{\mu \rightarrow 0^{+}}T_{\mu ,\lambda }(\theta _{\mu ,\lambda })
&=&\lim_{\mu \rightarrow 0^{+}}\int_{0}^{1}\frac{B(\theta _{\mu ,\lambda
},\theta _{\mu ,\lambda }t)+1}{\sqrt{B(\theta _{\mu ,\lambda },\theta _{\mu
,\lambda }t)+2}}\frac{\theta _{\mu ,\lambda }}{\sqrt{B(\theta _{\mu ,\lambda
},\theta _{\mu ,\lambda }t)}}dt\smallskip \\
&=&\left\{
\begin{array}{ll}
0 & \text{if }g^{\prime }(0^{+})=\infty ,\smallskip \\
\frac{1}{\sqrt{\lambda g^{\prime }(0^{+})}}\int_{0}^{1}\frac{1}{\sqrt{%
t\left( 1-t\right) }}dt=\frac{\pi }{\sqrt{\lambda g^{\prime }(0^{+})}} &
\text{if }g^{\prime }(0^{+})\in (0,\infty ).%
\end{array}%
\right.
\end{eqnarray*}%
So $\lim\limits_{\mu \rightarrow 0^{+}}T_{\mu ,\lambda }(\theta _{\mu
,\lambda })=2\eta .$

\smallskip \textbf{Step 3.} We prove the statement (ii). By Steps 1--2, it
is sufficient to prove that%
\begin{equation}
\lim_{\mu \rightarrow 0^{+}}T_{\mu ,\lambda }(\tilde{\alpha}_{\mu ,\lambda
})=\eta \text{, \ and \ }\lim_{\mu \rightarrow \mu _{\lambda }^{-}}T_{\mu
,\lambda }(\theta _{\mu ,\lambda })=\lim_{\mu \rightarrow \mu _{\lambda
}^{-}}T_{\mu ,\lambda }(\tilde{\alpha}_{\mu ,\lambda })=\infty .  \label{4i}
\end{equation}%
Take $\breve{\alpha}\in \left( 0,c^{\ast }\right) $. By Lemma \ref{Le1}(ii),
then $\breve{\alpha}\in \left( \theta _{\mu ,\lambda },\beta _{\mu ,\lambda
}\right) $ for all sufficiently small $\mu >0$, which, by Lemmas \ref{Le0}
and \ref{Le3}, implies that%
\begin{equation}
T_{0,\lambda }(\breve{\alpha})=\lim_{\mu \rightarrow 0^{+}}T_{\mu ,\lambda }(%
\breve{\alpha})\geq \lim_{\mu \rightarrow 0^{+}}T_{\mu ,\lambda }(\tilde{%
\alpha}_{\mu ,\lambda })\geq \lim_{\mu \rightarrow 0^{+}}T_{0,\lambda }(%
\tilde{\alpha}_{\mu ,\lambda })\geq \eta .  \label{4ib}
\end{equation}%
Since $\breve{\alpha}$ is arbitrary, we take $\breve{\alpha}\rightarrow
0^{+} $. So by (\ref{4ib}) and Lemma \ref{Le0}, we obtain%
\begin{equation}
\lim_{\mu \rightarrow 0^{+}}T_{\mu ,\lambda }(\tilde{\alpha}_{\mu ,\lambda
})=\eta .  \label{4j}
\end{equation}%
In addition, by the similar argument in (\ref{4e}), we see that%
\begin{equation}
\lim_{\mu \rightarrow \mu _{\lambda }^{-}}T_{\mu ,\lambda }(\theta _{\mu
,\lambda })\geq \lim_{\mu \rightarrow \mu _{\lambda }^{-}}T_{\mu ,\lambda }(%
\tilde{\alpha}_{\mu ,\lambda })=\infty .  \label{4h}
\end{equation}%
Thus, (\ref{4i}) holds by (\ref{4j}) and (\ref{4h}).

The proof is complete.
\end{proof}

\begin{lemma}
\label{Le6}Consider (\ref{eq1}). Let%
\begin{equation}
\Phi (\alpha ,\lambda )\equiv \int_{0}^{1}\frac{\alpha \lambda E_{t}(\alpha
)+\alpha }{\sqrt{\lambda ^{2}E_{t}^{2}(\alpha )+2\lambda E_{t}(\alpha )}}dt%
\text{ \ for }0<\alpha <c^{\ast }\text{ and }\lambda >0,  \label{PH}
\end{equation}%
where $E_{t}$ is defined by (\ref{ET}). Then the following statements
(i)--(iii) hold.

\begin{itemize}
\item[(i)] If $T_{\mu ,\lambda }(\theta _{\mu ,\lambda })=L$ for some $%
\left( \mu ,\lambda \right) \in \Omega $, then $\Phi (\theta _{\mu ,\lambda
},\lambda )=L$.

\item[(ii)] $\Phi (\alpha ,\lambda )>L$ if $L\leq \alpha <c^{\ast }$ and $%
\lambda >0.$

\item[(iii)] For any $\alpha \in (0,c_{L}^{\ast })$, there exists unique $%
\hat{\lambda}=\hat{\lambda}(\alpha )>0$ such that $\Phi (\alpha ,\hat{\lambda%
}(\alpha ))=L$ where $c_{L}^{\ast }$ is defined in Section 2. Furthermore,

\begin{itemize}
\item[(a)] $\alpha =\theta _{\hat{\mu},\hat{\lambda}}$ and $T_{\hat{\mu},%
\hat{\lambda}}(\theta _{\hat{\mu},\hat{\lambda}})=L$ for any $\alpha \in
(0,c_{L}^{\ast })$ where%
\begin{equation}
\hat{\mu}=\hat{\mu}(\alpha )\equiv \hat{\lambda}(\alpha )\frac{G(\alpha )}{%
\alpha }<\mu _{\hat{\lambda}}.  \label{Le5c}
\end{equation}

\item[(b)] Both $\hat{\lambda}$ and $\hat{\mu}$ are continuously
differentiable and strictly increasing functions on $(0,c_{L}^{\ast })$.

\item[(c)] $\hat{\lambda}(0^{+})=4\kappa $ and $\hat{\lambda}((c_{L}^{\ast
})^{-})=\infty .$

\item[(d)] $\hat{\mu}(0^{+})=0$ and $\hat{\mu}((c_{L}^{\ast })^{-})=\infty .$

\item[(e)] $\hat{\lambda}\searrow 4\kappa $ as $\hat{\mu}\searrow 0$, and $%
\hat{\lambda}\nearrow \infty $ as $\hat{\mu}\nearrow \infty $.

\item[(f)] $\hat{\mu}\searrow 0$ as $\hat{\lambda}\searrow 4\kappa $, and $%
\hat{\mu}\nearrow \infty $ as $\hat{\lambda}\nearrow \infty $.
\end{itemize}
\end{itemize}
\end{lemma}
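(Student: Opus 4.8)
\emph{Overall plan.} I would establish (i), (ii), (iii) in this order, feeding each conclusion into the next; the only part needing a real estimate is the monotonicity in (iii)(b), and I expect even that to reduce to the hypothesis $g''<0$.

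\emph{Parts (i) and (ii).} For (i), substitute the identity $B(\theta_{\mu,\lambda},\theta_{\mu,\lambda}t)=\lambda E_{t}(\theta_{\mu,\lambda})$ from (\ref{t2}) into the time-map formula (\ref{T}): since $(\mu,\lambda)\in\Omega$ forces $\theta_{\mu,\lambda}\in(0,c^{\ast})$ (Lemma \ref{Le1}(i)) and $E_{t}(\alpha)>0$ on $(0,1)$ for every $\alpha\in(0,c^{\ast})$ — because $[G(u)/u]'>0$ on $(0,c^{\ast})$ (Lemma \ref{LH}(ii)) gives $G(\alpha t)<tG(\alpha)$, so the radicand in (\ref{PH}) is positive — the right side of (\ref{T}) becomes exactly $\Phi(\theta_{\mu,\lambda},\lambda)$. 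This in fact gives the identity $T_{\mu,\lambda}(\theta_{\mu,\lambda})=\Phi(\theta_{\mu,\lambda},\lambda)$ for all $(\mu,\lambda)\in\Omega$, which is what I will actually use later. For (ii), note $(\lambda E_{t}+1)^{2}=(\lambda^{2}E_{t}^{2}+2\lambda E_{t})+1>\lambda^{2}E_{t}^{2}+2\lambda E_{t}$, so each integrand of $\Phi(\alpha,\lambda)$ strictly exceeds $\alpha$; hence $\Phi(\alpha,\lambda)>\alpha$ for every $\alpha\in(0,c^{\ast})$, and in particular $\Phi(\alpha,\lambda)>\alpha\geq L$ when $L\leq\alpha<c^{\ast}$.

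\emph{Existence and uniqueness of $\hat{\lambda}$, and (iii)(a).} I would observe that $x\mapsto(x+1)/\sqrt{x^{2}+2x}$ has derivative $-(x^{2}+2x)^{-3/2}<0$, so $\Phi(\alpha,\cdot)$ is strictly decreasing on $(0,\infty)$, with $\Phi(\alpha,0^{+})=\infty$ and $\Phi(\alpha,\infty)=\alpha$ by monotone, resp. dominated, convergence; since $\alpha<c_{L}^{\ast}\leq L$, the intermediate value theorem yields a unique $\hat{\lambda}(\alpha)>0$ with $\Phi(\alpha,\hat{\lambda}(\alpha))=L$. For (a), set $\hat{\mu}(\alpha)=\hat{\lambda}(\alpha)G(\alpha)/\alpha$; by Lemma \ref{LH}(ii), $G(\alpha)/\alpha<G(c^{\ast})/c^{\ast}=g(c^{\ast})$, so $\hat{\mu}<g(c^{\ast})\hat{\lambda}=\mu_{\hat{\lambda}}$, hence $(\hat{\mu},\hat{\lambda})\in\Omega$ and $\hat{\lambda}>\lambda_{\hat{\mu}}$. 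Then $F_{\hat{\mu},\hat{\lambda}}(\alpha)=0$, and by Lemma \ref{Le1}(i) the number $\theta_{\hat{\mu},\hat{\lambda}}\in(0,c^{\ast})$ solves $G(\theta)/\theta=\hat{\mu}/\hat{\lambda}=G(\alpha)/\alpha$; strict monotonicity of $G(u)/u$ on $(0,c^{\ast})$ forces $\theta_{\hat{\mu},\hat{\lambda}}=\alpha$, and the identity from (i) gives $T_{\hat{\mu},\hat{\lambda}}(\theta_{\hat{\mu},\hat{\lambda}})=\Phi(\alpha,\hat{\lambda})=L$.

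\emph{Part (b).} First, $\Phi$ is $C^{1}$ on $\{0<\alpha<c^{\ast},\ \lambda>0\}$ (the integrand is smooth for $0<t<1$ and its $\alpha$- and $\lambda$-derivatives stay $O(t^{-1/2})$, $O((1-t)^{-1/2})$ near the endpoints — the same routine estimates used for $T_{\mu,\lambda}$), so since $\partial_{\lambda}\Phi<0$ the implicit function theorem applied to $\Phi(\alpha,\hat{\lambda}(\alpha))=L$ gives $\hat{\lambda}\in C^{1}(0,c_{L}^{\ast})$, whence $\hat{\mu}=\hat{\lambda}(\alpha)G(\alpha)/\alpha\in C^{1}(0,c_{L}^{\ast})$. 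For strict monotonicity, $\hat{\lambda}'(\alpha)=-\partial_{\alpha}\Phi/\partial_{\lambda}\Phi$, so it suffices to prove $\partial_{\alpha}\Phi>0$. Differentiating under the integral sign one finds
\begin{equation*}
\partial_{\alpha}\Phi(\alpha,\lambda)=\int_{0}^{1}\frac{(\lambda E_{t}(\alpha)+1)\big(\lambda^{2}E_{t}^{2}(\alpha)+2\lambda E_{t}(\alpha)\big)-\alpha\lambda t\,[g(\alpha)-g(\alpha t)]}{\big(\lambda^{2}E_{t}^{2}(\alpha)+2\lambda E_{t}(\alpha)\big)^{3/2}}\,dt ,
\end{equation*}
and, using $(\lambda E_{t}+1)(\lambda^{2}E_{t}^{2}+2\lambda E_{t})\geq 2\lambda E_{t}$, the numerator is $\geq\lambda\,P(\alpha t)$, where $P(v):=\frac{2G(\alpha)}{\alpha}v+vg(v)-2G(v)-vg(\alpha)$ and, for $0<v\leq\alpha$, $P(v)/v=Q(v)-Q(\alpha)$ with $Q(v):=g(v)-2G(v)/v$. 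Since $v^{2}Q'(v)=v^{2}g'(v)-2vg(v)+2G(v)=:R(v)$ satisfies $R(0)=0$ and $R'(v)=v^{2}g''(v)<0$ on $(0,\sigma)$, it follows that $R<0$ and $Q$ is strictly decreasing on $(0,\sigma)$, hence $P(v)=v(Q(v)-Q(\alpha))>0$ for $0<v<\alpha<c^{\ast}$. Thus the integrand above is positive, $\partial_{\alpha}\Phi>0$, $\hat{\lambda}$ is strictly increasing, and then $\hat{\mu}=\hat{\lambda}(\alpha)G(\alpha)/\alpha$ is a product of positive strictly increasing functions (the second factor by Lemma \ref{LH}(ii)), hence also strictly increasing. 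So this step, which looks like the obstacle, collapses onto $g''<0$.

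\emph{Parts (c)--(f).} As $\alpha\to0^{+}$ one has $E_{t}(\alpha)/\alpha^{2}\to\frac{t(1-t)}{2}g'(0^{+})$ (the computation behind (\ref{Le4c})), so $\Phi(\alpha,\lambda)\to\pi/\sqrt{\lambda g'(0^{+})}=2\eta(\lambda)$ if $g'(0^{+})\in(0,\infty)$ and $\Phi(\alpha,\lambda)\to0$ for each fixed $\lambda$ if $g'(0^{+})=\infty$; since $\hat{\lambda}$ is monotone (by (b)) its limit at $0^{+}$ exists, and feeding $\Phi(\alpha,\hat{\lambda}(\alpha))=L$ through these asymptotics (using that $\Phi$ decreases in $\lambda$ to exclude the limiting values $0$ and $\infty$) forces $\hat{\lambda}(0^{+})=\pi^{2}/(L^{2}g'(0^{+}))=4\kappa$ in the first case and $\hat{\lambda}(0^{+})=0=4\kappa$ in the second. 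As $\alpha\to(c_{L}^{\ast})^{-}$: if $c_{L}^{\ast}=L<c^{\ast}$, then by (ii) and continuity $\Phi(\alpha,\Lambda)\to\Phi(L,\Lambda)>L$ for every finite $\Lambda$; if $c_{L}^{\ast}=c^{\ast}$, then $E_{t}(c^{\ast})$ has a double zero at $t=1$ (by (\ref{4d})), so $\Phi(\alpha,\Lambda)\to\infty$ for every finite $\Lambda$ by Fatou; either way $\hat{\lambda}$ increasing and $\Phi$ decreasing in $\lambda$ force $\hat{\lambda}((c_{L}^{\ast})^{-})=\infty$. Part (d) is then immediate: $\hat{\mu}(0^{+})=\hat{\lambda}(0^{+})\lim_{\alpha\to0^{+}}G(\alpha)/\alpha=4\kappa\cdot0=0$ and $\hat{\mu}((c_{L}^{\ast})^{-})=\infty\cdot G(c_{L}^{\ast})/c_{L}^{\ast}=\infty$. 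Finally, for (e) and (f), parts (b)--(d) make $\hat{\mu}\colon(0,c_{L}^{\ast})\to(0,\infty)$ and $\hat{\lambda}\colon(0,c_{L}^{\ast})\to(4\kappa,\infty)$ continuous strictly increasing bijections, so $\hat{\lambda}$ expressed as a function of $\hat{\mu}$ (i.e.\ $\hat{\lambda}\circ\hat{\mu}^{-1}$) and $\hat{\mu}$ as a function of $\hat{\lambda}$ are continuous and strictly increasing, and the claimed limits read off from the endpoint values in (c)--(d). I expect the genuinely delicate step to be this endpoint bookkeeping in (c): the uniform $\alpha\to0^{+}$ asymptotics of $E_{t}(\alpha)$ and the split $c_{L}^{\ast}=L$ versus $c_{L}^{\ast}=c^{\ast}$.
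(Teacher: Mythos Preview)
Your proposal is correct and follows essentially the same route as the paper: (i) via the identity (\ref{t2}), (ii) via $(\lambda E_t+1)^2>\lambda^2E_t^2+2\lambda E_t$, existence/uniqueness of $\hat{\lambda}$ via strict monotonicity of $x\mapsto(x+1)/\sqrt{x^2+2x}$ and the limits $\Phi\to\infty,\alpha$, part (a) via Lemma~\ref{LH}(ii), and parts (c)--(f) via the $E_t(\alpha)/\alpha^2$ asymptotics and the $c_L^\ast=L$ vs.\ $c_L^\ast=c^\ast$ split. In (b) your factored numerator $(\lambda E_t+1)(\lambda^2E_t^2+2\lambda E_t)-\alpha\lambda E_t'$ expands to the paper's $\lambda^3E_t^3+3\lambda^2E_t^2+\lambda[2E_t-\alpha E_t']$, and your function $Q(v)=g(v)-2G(v)/v$ is exactly $-\Lambda(v)/v$ in the paper's notation, so the reduction to $g''<0$ is literally the same argument written with the opposite sign convention.
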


\begin{proof}
We divide this proof into the following seven steps.

\smallskip \textbf{Step 1.} We prove the statements (i) and (ii). By (\ref{T}%
) and (\ref{t2}), we have
\begin{equation}
T_{\mu ,\lambda }(\theta _{\mu ,\lambda })=\int_{0}^{1}\frac{\theta _{\mu
,\lambda }\left[ \lambda E_{t}(\theta _{\mu ,\lambda })+1\right] }{\sqrt{%
\lambda ^{2}E_{t}^{2}(\theta _{\mu ,\lambda })+2\lambda E_{t}(\theta _{\mu
,\lambda })}}dt=\Phi (\theta _{\mu ,\lambda },\lambda )\text{ \ for }\left(
\mu ,\lambda \right) \in \Omega .  \label{6a}
\end{equation}%
Assume that $T_{\mu ,\lambda }(\theta _{\mu ,\lambda })=L$ for some $\left(
\mu ,\lambda \right) \in \Omega $. It follows that $\Phi (\theta _{\mu
,\lambda },\lambda )=L$ by (\ref{6a}). Thus the statement (i) holds. If $%
L\leq \alpha <c^{\ast }$, we see that%
\begin{equation*}
\Phi (\alpha ,\lambda )=\int_{0}^{1}\frac{\alpha \left[ \lambda E_{t}(\alpha
)+1\right] }{\sqrt{\left[ \lambda E_{t}(\alpha )+1\right] ^{2}-1}}%
dt>\int_{0}^{1}\alpha dt=\alpha \geq L\text{ \ for }\lambda >0,
\end{equation*}%
which implies that the statement (ii) holds.

\textbf{\smallskip Step 2.} We prove that, for any $\alpha \in
(0,c_{L}^{\ast })$, there exists unique $\hat{\lambda}=\hat{\lambda}(\alpha
)>0$ such that $\Phi (\alpha ,\hat{\lambda}(\alpha ))=L$. By Lemma \ref{LH},
we see that%
\begin{equation}
E_{t}(\alpha )=\alpha t\left[ \frac{G(\alpha )}{\alpha }-\frac{G(\alpha t)}{%
\alpha t}\right] >0\text{ \ for }0<\alpha <c^{\ast }\text{ and }0<t<1,
\label{Lb2}
\end{equation}%
from which it follows that%
\begin{equation}
\frac{\partial }{\partial \lambda }\Phi (\alpha ,\lambda )=-\int_{0}^{1}%
\frac{\alpha E_{t}(\alpha )}{\left[ \lambda ^{2}E_{t}^{2}(\alpha )+2\lambda
E_{t}(\alpha )\right] ^{3/2}}dt<0\text{ \ for }0<\alpha <c^{\ast }\text{ and
}\lambda >0\text{.}  \label{Le5h}
\end{equation}%
Since%
\begin{equation*}
\lim_{\lambda \rightarrow \infty }\Phi (\alpha ,\lambda )=\alpha
<L<\lim_{\lambda \rightarrow 0^{+}}\Phi (\alpha ,\lambda )=\infty \text{ \
for }0<\alpha <c_{L}^{\ast }=\min \{c^{\ast },L\},
\end{equation*}%
and by (\ref{Le5h}), there exists unique $\hat{\lambda}=\hat{\lambda}(\alpha
)>0$ such that $\Phi (\alpha ,\hat{\lambda}(\alpha ))=L$.

\textbf{\smallskip Step 3.} We prove the statement (iii)(a). Let $\alpha \in
(0,c_{L}^{\ast })$ be given. From Step 2, we obtain $\hat{\lambda}$. Then by
Lemma \ref{LH}(ii), we observe that%
\begin{equation*}
\hat{\mu}=\hat{\lambda}\frac{G(\alpha )}{\alpha }<\hat{\lambda}\frac{%
G(c^{\ast })}{c^{\ast }}=\mu _{\hat{\lambda}}\text{ \ and \ }F_{\hat{\mu},%
\hat{\lambda}}(\alpha )=\hat{\lambda}G(\alpha )-\hat{\mu}\alpha =0,
\end{equation*}%
which implies that (\ref{Le5c}) holds and $\alpha =\theta _{\hat{\mu},\hat{%
\lambda}}$. So by (\ref{6a}) and Step 2, we have
\begin{equation*}
T_{\hat{\mu},\hat{\lambda}}(\theta _{\hat{\mu},\hat{\lambda}})=\Phi (\theta
_{\hat{\mu},\hat{\lambda}},\hat{\lambda})=\Phi (\alpha ,\hat{\lambda})=L.
\end{equation*}%
Thus the statement (iii)(a) holds.

\textbf{\smallskip Step 4.} We prove the statement (iii)(b). By Step 2, (\ref%
{Le5h}) and implicit function theorem, we see that $\hat{\lambda}(\alpha )$
is a continuously differentiable function on $(0,c_{L}^{\ast })$. So by (\ref%
{Le5c}), $\hat{\mu}(\alpha )$ is a continuously differentiable function on $%
(0,c_{L}^{\ast })$. We assert that%
\begin{equation}
\frac{\partial }{\partial \alpha }\Phi (\alpha ,\lambda )=\int_{0}^{1}\frac{%
\lambda ^{3}E_{t}^{3}(\alpha )+3\lambda ^{2}E_{t}^{2}(\alpha )+\lambda \left[
2E_{t}(\alpha )-\alpha E_{t}^{\prime }(\alpha )\right] }{\left[ \lambda
^{2}E_{t}^{2}(\alpha )+2\lambda E_{t}(\alpha )\right] ^{3/2}}dt>0
\label{Le5g}
\end{equation}%
for $0<\alpha <c^{\ast }$ and $\lambda >0$. Since%
\begin{equation*}
0=\frac{\partial }{\partial \alpha }L=\frac{\partial }{\partial \alpha }\Phi
(\alpha ,\hat{\lambda}(\alpha ))=\left[ \frac{\partial }{\partial \alpha }%
\Phi (\alpha ,\lambda )\right] _{\lambda =\hat{\lambda}(\alpha )}+\left[
\frac{\partial }{\partial \lambda }\Phi (\alpha ,\lambda )\right] _{\lambda =%
\hat{\lambda}(\alpha )}\hat{\lambda}^{\prime }(\alpha ),
\end{equation*}%
for $0<\alpha <c_{L}^{\ast }$, and by (\ref{Le5h}) and (\ref{Le5g}), we see
that%
\begin{equation}
\hat{\lambda}^{\prime }(\alpha )=-\frac{\left[ \frac{\partial }{\partial
\alpha }\Phi (\alpha ,\lambda )\right] _{\lambda =\hat{\lambda}(\alpha )}}{%
\left[ \frac{\partial }{\partial \lambda }\Phi (\alpha ,\lambda )\right]
_{\lambda =\hat{\lambda}(\alpha )}}>0\text{ \ for }0<\alpha <c_{L}^{\ast }.
\label{I1}
\end{equation}%
By (\ref{I1}) and Lemma \ref{LH}(ii), we further see that%
\begin{equation*}
\hat{\mu}^{\prime }(\alpha )=\hat{\lambda}^{\prime }(\alpha )\frac{G(\alpha )%
}{\alpha }+\hat{\lambda}(\alpha )\left[ \frac{G(\alpha )}{\alpha }\right]
^{\prime }>0\text{ \ for }0<\alpha <c_{L}^{\ast }.
\end{equation*}%
Based on the above discussions, the statement (iii)(b) holds.

Next, we prove the assertion (\ref{Le5g}). By (\ref{Lb2}) and (\ref{Le5g}),
it is sufficient to prove that
\begin{equation}
2E_{t}(\alpha )-\alpha E_{t}^{\prime }(\alpha )>0\text{ \ for }0<\alpha
<c^{\ast }\text{ and }0<t<1.  \label{Lb9}
\end{equation}%
We compute%
\begin{equation}
2E_{t}(\alpha )-\alpha E_{t}^{\prime }(\alpha )=\Lambda (\alpha )t-\Lambda
(\alpha t)=\alpha t\left[ \frac{\Lambda (\alpha )}{\alpha }-\frac{\Lambda
(\alpha t)}{\alpha t}\right] ,  \label{Lb3}
\end{equation}%
where $\Lambda (u)\equiv 2G(u)-ug(u)$. We further compute%
\begin{equation}
\frac{\partial }{\partial u}\frac{\Lambda (u)}{u}=\frac{u\Lambda ^{\prime
}(u)-\Lambda (u)}{u^{2}}.  \label{Lb5}
\end{equation}%
Since $g^{\prime \prime }(u)<0$ on $\left( 0,\sigma \right) $, we see that%
\begin{equation}
\frac{\partial }{\partial u}\left[ u\Lambda ^{\prime }(u)-\Lambda (u)\right]
=u\Lambda ^{\prime \prime }(u)=-u^{2}g^{\prime \prime }(u)>0\text{ \ for }%
0<u<c^{\ast }.  \label{Lb6}
\end{equation}%
Since $g(0)=G(0)=0$, and by (\ref{gg}), (\ref{Lb5}) and (\ref{Lb6}), we
observe that, for $0<u<c^{\ast }$,
\begin{eqnarray*}
\frac{\partial }{\partial u}\frac{\Lambda (u)}{u} &=&\frac{u\Lambda ^{\prime
}(u)-\Lambda (u)}{u^{2}}>\frac{\lim_{u\rightarrow 0^{+}}\left[ u\Lambda
^{\prime }(u)-\Lambda (u)\right] }{u^{2}}\smallskip \\
&=&\frac{\lim_{u\rightarrow 0^{+}}\left[ 2ug(u)-u^{2}g^{\prime }(u)-2G(u)%
\right] }{u^{2}}=0.
\end{eqnarray*}%
So (\ref{Lb9}) holds by (\ref{Lb3}). It implies that the assertion (\ref%
{Le5g}) holds.

\textbf{\smallskip Step 5.} We prove that $\hat{\lambda}(0^{+})=4\kappa $.
By (\ref{I1}), we have $0\leq \hat{\lambda}(0^{+})<\hat{\lambda}%
((c_{L}^{\ast })^{-})\leq \infty $. Since $E_{t}(0)=0$ for $0<t<1$, and by
L'H\^{o}pital's rule, we see that%
\begin{equation}
\lim_{\alpha \rightarrow 0^{+}}\frac{E_{t}(\alpha )}{\alpha }=\lim_{\alpha
\rightarrow 0^{+}}t\left[ g(\alpha )-g(\alpha t)\right] =0  \label{Lb10}
\end{equation}%
and%
\begin{equation}
\lim_{\alpha \rightarrow 0^{+}}\frac{E_{t}(\alpha )}{\alpha ^{2}}%
=\lim_{\alpha \rightarrow 0^{+}}\frac{t\left[ g^{\prime }(\alpha
)-tg^{\prime }(\alpha t)\right] }{2}=\frac{g^{\prime }(0^{+})}{2}\left(
t-t^{2}\right)  \label{Lb13}
\end{equation}%
for $0<t<1$. Notice that%
\begin{equation}
L=\Phi (\alpha ,\hat{\lambda}(\alpha ))=\int_{0}^{1}\frac{\hat{\lambda}%
(\alpha )E_{t}(\alpha )+1}{\sqrt{\hat{\lambda}^{2}(\alpha )\left[ \frac{%
E_{t}(\alpha )}{\alpha }\right] ^{2}+2\hat{\lambda}(\alpha )\frac{%
E_{t}(\alpha )}{\alpha ^{2}}}}dt  \label{Lb12}
\end{equation}%
for $0<\alpha <c^{\ast }$. Then we consider two cases.

Case 1. Assume that $g^{\prime }(0^{+})=\infty $. Suppose $\hat{\lambda}%
(0^{+})>0$. By (\ref{Lb10})--(\ref{Lb12}), we observe that%
\begin{equation*}
L=\lim_{\alpha \rightarrow 0^{+}}\Phi (\alpha ,\hat{\lambda}(\alpha ))=0.
\end{equation*}%
which is a contradiction. Thus $\hat{\lambda}(0^{+})=0=4\kappa $.

Case 2. Assume that $g^{\prime }(0^{+})\in (0,\infty )$. Suppose $\hat{%
\lambda}(0^{+})=0$. By (\ref{Lb10})--(\ref{Lb12}), we observe that%
\begin{equation*}
L=\lim_{\alpha \rightarrow 0^{+}}\Phi (\alpha ,\hat{\lambda}(\alpha
))=\infty .
\end{equation*}%
which is a contradiction. Thus $\hat{\lambda}(0^{+})>0$. Again, by (\ref%
{Lb10})--(\ref{Lb12}), then%
\begin{equation*}
L=\lim_{\alpha \rightarrow 0^{+}}\Phi (\alpha ,\hat{\lambda}(\alpha ))=\frac{%
1}{\sqrt{\hat{\lambda}(0^{+})g^{\prime }(0^{+})}}\int_{0}^{1}\frac{1}{\sqrt{%
t-t^{2}}}dt=\frac{\pi }{\sqrt{\hat{\lambda}(0^{+})g^{\prime }(0^{+})}},
\end{equation*}%
from which it follows that
\begin{equation*}
\hat{\lambda}(0^{+})=\frac{\pi ^{2}}{g^{\prime }(0^{+})L^{2}}=4\kappa .
\end{equation*}

\smallskip \textbf{Step 6. }We prove the statement (iii)(c). By Step 5, it
is sufficient to prove that $\hat{\lambda}((c_{L}^{\ast })^{-})=\infty $.
Suppose $\hat{\lambda}((c_{L}^{\ast })^{-})<\infty $. Let $\lambda _{1}=\hat{%
\lambda}((c_{L}^{\ast })^{-})$. Then we consider two cases.

\smallskip Case 1. Assume that $c^{\ast }\leq L$. It implies that $%
c_{L}^{\ast }=c^{\ast }$. By (\ref{4d}), there exist $\bar{M}>0$ and $\bar{%
\delta}\in (0,1)$ such that
\begin{equation}
0<E_{t}(c^{\ast })<\bar{M}(1-t)^{2}<1\text{ \ for }\bar{\delta}<t<1.
\label{Lb14}
\end{equation}%
By (\ref{Lb14}), we observe that%
\begin{eqnarray*}
L &=&\lim_{\alpha \rightarrow (c^{\ast })^{-}}\Phi (\alpha ,\hat{\lambda}%
(\alpha ))=\int_{0}^{1}\frac{c^{\ast }\left[ \lambda _{1}E_{t}(c^{\ast })+1%
\right] }{\sqrt{\lambda _{1}^{2}E_{t}^{2}(c^{\ast })+2\lambda
_{1}E_{t}(c^{\ast })}}dt \\
&\geq &\int_{\bar{\delta}}^{1}\frac{c^{\ast }}{\sqrt{\lambda
_{1}^{2}E_{t}^{2}(c^{\ast })+2\lambda _{1}E_{t}(c^{\ast })}}dt\geq \frac{%
c^{\ast }}{\sqrt{\lambda _{1}^{2}+2\lambda _{1}}}\int_{\bar{\delta}}^{1}%
\frac{1}{\sqrt{E_{t}(c^{\ast })}}dt \\
&=&\frac{c^{\ast }}{\sqrt{\left( \lambda _{1}^{2}+2\lambda _{1}\right) \bar{M%
}}}\int_{\bar{\delta}}^{1}\frac{1}{1-t}dt=\infty ,
\end{eqnarray*}%
which is a contradiction.

\smallskip Case 2. Assume that $c^{\ast }>L$. It implies that $c_{L}^{\ast
}=L$. Let $\hat{E}_{t}=E_{t}(c_{L}^{\ast })\in (0,\infty )$. Then we observe
that%
\begin{equation*}
L=\lim_{\alpha \rightarrow (c_{L}^{\ast })^{-}}\Phi (\alpha ,\hat{\lambda}%
(\alpha ))=c_{L}^{\ast }\int_{0}^{1}\frac{\lambda _{2}\hat{E}_{t}+1}{\sqrt{%
\left[ \lambda _{2}\hat{E}_{t}+1\right] ^{2}-1}}dt>c_{L}^{\ast }=L,
\end{equation*}%
which is a contradiction.

\noindent By Cases 1--2, we obtain $\hat{\lambda}((c_{L}^{\ast
})^{-})=\infty $. The statement (iii)(c) holds.

\smallskip \textbf{Step 7}. We prove the statement (iii)(d)--(f). By L'H\^{o}%
pital's rule, we observe that%
\begin{equation*}
\lim_{\alpha \rightarrow 0^{+}}\frac{G(\alpha )}{\alpha }=\lim_{\alpha
\rightarrow 0^{+}}g(\alpha )=0\text{ \ and \ }\lim_{\alpha \rightarrow
(c_{L}^{\ast })^{-}}\frac{G(\alpha )}{\alpha }=\frac{G(c_{L}^{\ast })}{%
c_{L}^{\ast }}\in (0,\infty ).
\end{equation*}%
So by (\ref{Le5c}), then%
\begin{equation*}
\lim_{\alpha \rightarrow 0^{+}}\hat{\mu}(\alpha )=\lim_{\alpha \rightarrow
0^{+}}\hat{\lambda}(\alpha )\frac{G(\alpha )}{\alpha }=0\text{ \ and \ }%
\lim_{\alpha \rightarrow \left( c_{L}^{\ast }\right) ^{-}}\hat{\mu}(\alpha
)=\lim_{\alpha \rightarrow \left( c_{L}^{\ast }\right) ^{-}}\hat{\lambda}%
(\alpha )\frac{G(\alpha )}{\alpha }=\infty .
\end{equation*}%
Thus, the statement (iii)(d) holds. Finally, the statements (iii)(e) and
(iii)(f) hold by the statements (iii)(b)--(iii)(d).

The proof is complete.
\end{proof}

\begin{lemma}
\label{Le7}Consider (\ref{eq1}). For any $\mu >0$, there exist $\bar{\lambda}%
\in \left( \lambda _{\mu },\infty \right) $ and $\lambda ^{\ast }\in \left(
\lambda _{\mu },\bar{\lambda}\right) $ such that%
\begin{equation}
T_{\mu ,\lambda }(\theta _{\mu ,\lambda })\left\{
\begin{array}{ll}
>L & \text{for }\lambda _{\mu }<\lambda <\bar{\lambda}, \\
=L & \text{for }\lambda =\bar{\lambda}, \\
<L & \text{for }\lambda >\bar{\lambda},%
\end{array}%
\right. \text{ \ and \ }T_{\mu ,\lambda }(\tilde{\alpha}_{\mu ,\lambda
})\left\{
\begin{array}{ll}
>L & \text{for }\lambda _{\mu }<\lambda <\lambda ^{\ast }, \\
=L & \text{for }\lambda =\lambda ^{\ast }, \\
<L & \text{for }\lambda >\lambda ^{\ast }.%
\end{array}%
\right.  \label{7b}
\end{equation}%
Furthermore, both $\bar{\lambda}=\bar{\lambda}(\mu )$ and $\lambda ^{\ast
}=\lambda ^{\ast }(\mu )$ are strictly increasing and continuous functions
on $\left( 0,\infty \right) $,%
\begin{equation}
\lim_{\mu \rightarrow 0^{+}}\bar{\lambda}(\mu )=4\kappa \text{, \ }\lim_{\mu
\rightarrow 0^{+}}\lambda ^{\ast }(\mu )=\kappa \text{ \ and \ }\lim_{\mu
\rightarrow \infty }\bar{\lambda}(\mu )=\lim_{\mu \rightarrow \infty
}\lambda ^{\ast }(\mu )=\infty .  \label{7a}
\end{equation}
\end{lemma}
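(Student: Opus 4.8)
The plan, for a fixed $\mu>0$, is: (A) produce $\lambda^{\ast}$ and $\bar{\lambda}$ together with the trichotomies in (\ref{7b}); (B) identify $\bar{\lambda}(\mu)$ with the composition $\hat{\lambda}\circ\hat{\mu}^{-1}$ of the functions of Lemma~\ref{Le6}(iii); (C) handle $\lambda^{\ast}(\mu)$ directly. Write $\eta(\lambda)$ for the quantity in (\ref{E}). For (A): by Lemma~\ref{Le4}(i)(b), $\lambda\mapsto T_{\mu,\lambda}(\tilde{\alpha}_{\mu,\lambda})$ is strictly decreasing and continuous on $(\lambda_{\mu},\infty)$ with limit $\infty$ as $\lambda\to\lambda_{\mu}^{+}$ and $0$ as $\lambda\to\infty$, so there is a unique $\lambda^{\ast}=\lambda^{\ast}(\mu)\in(\lambda_{\mu},\infty)$ with $T_{\mu,\lambda^{\ast}}(\tilde{\alpha}_{\mu,\lambda^{\ast}})=L$, and the right half of (\ref{7b}) follows. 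For $\bar{\lambda}$ the point I would establish is that $\lambda\mapsto T_{\mu,\lambda}(\theta_{\mu,\lambda})$ is \emph{also} strictly decreasing on $(\lambda_{\mu},\infty)$: by (\ref{6a}) it equals $\Phi(\theta_{\mu,\lambda},\lambda)$, and for $\lambda_{\mu}<\lambda_{1}<\lambda_{2}$ Lemma~\ref{Le1}(i) gives $0<\theta_{\mu,\lambda_{2}}<\theta_{\mu,\lambda_{1}}<c^{\ast}$, so
\begin{equation*}
\Phi(\theta_{\mu,\lambda_{1}},\lambda_{1})>\Phi(\theta_{\mu,\lambda_{2}},\lambda_{1})>\Phi(\theta_{\mu,\lambda_{2}},\lambda_{2})
\end{equation*}
by $\partial_{\alpha}\Phi>0$ in (\ref{Le5g}) and $\partial_{\lambda}\Phi<0$ in (\ref{Le5h}). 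Together with Lemma~\ref{Le4}(i)(a) this yields a unique $\bar{\lambda}=\bar{\lambda}(\mu)\in(\lambda_{\mu},\infty)$ with $T_{\mu,\bar{\lambda}}(\theta_{\mu,\bar{\lambda}})=L$ and the left half of (\ref{7b}). Finally I would note that, at $\lambda=\lambda^{\ast}$, Lemma~\ref{Le3} makes $T_{\mu,\lambda^{\ast}}$ strictly decreasing on $[\theta_{\mu,\lambda^{\ast}},\tilde{\alpha}_{\mu,\lambda^{\ast}}]$, so $T_{\mu,\lambda^{\ast}}(\theta_{\mu,\lambda^{\ast}})>T_{\mu,\lambda^{\ast}}(\tilde{\alpha}_{\mu,\lambda^{\ast}})=L$; since $\lambda\mapsto T_{\mu,\lambda}(\theta_{\mu,\lambda})$ is decreasing with unique root $\bar{\lambda}$, we get $\lambda^{\ast}<\bar{\lambda}$.

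For (B): by Lemma~\ref{Le6}(iii)(b),(d), $\hat{\mu}$ is a strictly increasing continuous bijection of $(0,c_{L}^{\ast})$ onto $(0,\infty)$. Given $\mu>0$, set $\alpha=\theta_{\mu,\bar{\lambda}(\mu)}$; since $\bar{\lambda}(\mu)>\lambda_{\mu}$, Lemma~\ref{Le1}(i) gives $\alpha\in(0,c^{\ast})$, and $\Phi(\alpha,\bar{\lambda}(\mu))=T_{\mu,\bar{\lambda}}(\theta_{\mu,\bar{\lambda}})=L$ by (\ref{6a}), which by Lemma~\ref{Le6}(ii) forces $\alpha<c_{L}^{\ast}$ (otherwise $\alpha\in[L,c^{\ast})$ would give $\Phi(\alpha,\bar{\lambda}(\mu))>L$). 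Then $\mu=\bar{\lambda}(\mu)G(\alpha)/\alpha=\hat{\mu}(\alpha)$ by (\ref{0g}) and (\ref{Le5c}), and uniqueness in Lemma~\ref{Le6}(iii) gives $\bar{\lambda}(\mu)=\hat{\lambda}(\alpha)$. Conversely, Lemma~\ref{Le6}(iii)(a) gives $T_{\hat{\mu}(\alpha),\hat{\lambda}(\alpha)}(\theta_{\hat{\mu}(\alpha),\hat{\lambda}(\alpha)})=L$ for every $\alpha\in(0,c_{L}^{\ast})$, so $\bar{\lambda}(\hat{\mu}(\alpha))=\hat{\lambda}(\alpha)$. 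Hence $\bar{\lambda}=\hat{\lambda}\circ\hat{\mu}^{-1}$ on $(0,\infty)$ is strictly increasing and continuous, and Lemma~\ref{Le6}(iii)(c),(d) yield $\lim_{\mu\to0^{+}}\bar{\lambda}(\mu)=\hat{\lambda}(0^{+})=4\kappa$ and $\lim_{\mu\to\infty}\bar{\lambda}(\mu)=\hat{\lambda}((c_{L}^{\ast})^{-})=\infty$.

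For (C): if $\mu_{1}<\mu_{2}$ and $\lambda_{i}=\lambda^{\ast}(\mu_{i})$, then $\lambda_{2}>\lambda_{\mu_{2}}>\lambda_{\mu_{1}}$ puts both $(\mu_{1},\lambda_{2})$ and $(\mu_{2},\lambda_{2})$ in $\Omega$, and $\lambda_{1}\ge\lambda_{2}$ would give
\begin{equation*}
L=T_{\mu_{1},\lambda_{1}}(\tilde{\alpha}_{\mu_{1},\lambda_{1}})\le T_{\mu_{1},\lambda_{2}}(\tilde{\alpha}_{\mu_{1},\lambda_{2}})<T_{\mu_{2},\lambda_{2}}(\tilde{\alpha}_{\mu_{2},\lambda_{2}})=L
\end{equation*}
by Lemma~\ref{Le4}(i)(b),(ii)(b), a contradiction, so $\lambda^{\ast}$ is strictly increasing on $(0,\infty)$. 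Its range I would compute to be $(\kappa,\infty)$: since $T_{\mu,\lambda}(\tilde{\alpha}_{\mu,\lambda})>\eta(\lambda)$ for all $\mu>0$ (Lemma~\ref{Le4}(ii)(b)), taking $\lambda=\lambda^{\ast}(\mu)$ gives $\eta(\lambda^{\ast}(\mu))<L$, which forces $\lambda^{\ast}(\mu)>\kappa$ (if $g^{\prime}(0^{+})\in(0,\infty)$, $\eta(\cdot)$ is strictly decreasing with $\eta(\kappa)=L$; if $g^{\prime}(0^{+})=\infty$, then $\kappa=0<\lambda_{\mu}<\lambda^{\ast}(\mu)$); and for any $\lambda_{0}>\kappa$ one has $\eta(\lambda_{0})<L$, while $\mu\mapsto T_{\mu,\lambda_{0}}(\tilde{\alpha}_{\mu,\lambda_{0}})$ is continuous, strictly increasing on $(0,\mu_{\lambda_{0}})$ and runs from $\eta(\lambda_{0})$ up to $\infty$ (Lemma~\ref{Le4}(ii)(b)), so it equals $L$ at a unique $\mu_{0}$, forcing $\lambda^{\ast}(\mu_{0})=\lambda_{0}$. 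A strictly increasing function on $(0,\infty)$ with range the interval $(\kappa,\infty)$ is continuous, with boundary limits $\kappa$ and $\infty$; this is (\ref{7a}) and completes the argument.

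\noindent\textit{Main obstacle.} The hard part will be the range computation in (C): the threshold $\eta(\lambda)$ must be carried through as a function of $\lambda$, and the degenerate case $g^{\prime}(0^{+})=\infty$ — where $\eta\equiv0$ and $\kappa=0$ trivialize the first comparison — has to be treated separately. The only other step that is not pure bookkeeping is the strict monotonicity of $\lambda\mapsto T_{\mu,\lambda}(\theta_{\mu,\lambda})$ in (A), which is not part of Lemma~\ref{Le4} and comes only from the identity $T_{\mu,\lambda}(\theta_{\mu,\lambda})=\Phi(\theta_{\mu,\lambda},\lambda)$ combined with the sign conditions $\partial_{\alpha}\Phi>0$ and $\partial_{\lambda}\Phi<0$. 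Everything else reduces to the implicit function theorem and the monotonicities already recorded in Lemmas~\ref{Le1}, \ref{Le3}, \ref{Le4} and \ref{Le6}.
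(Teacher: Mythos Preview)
Your proposal is correct and, in two places, genuinely different from the paper's argument. First, for the trichotomy for $T_{\mu,\lambda}(\theta_{\mu,\lambda})$, you prove outright that $\lambda\mapsto T_{\mu,\lambda}(\theta_{\mu,\lambda})=\Phi(\theta_{\mu,\lambda},\lambda)$ is strictly decreasing, via the sign conditions (\ref{Le5g}) and (\ref{Le5h}) combined with $\partial_{\lambda}\theta_{\mu,\lambda}<0$; the paper does not establish this monotonicity, and instead defines $\bar{\lambda}:=\hat{\lambda}\circ\hat{\mu}^{-1}$ a priori, checks $T_{\mu,\bar{\lambda}}(\theta_{\mu,\bar{\lambda}})=L$, proves uniqueness by routing any other root back through Lemma~\ref{Le6}, and deduces the trichotomy from continuity and the boundary limits of Lemma~\ref{Le4}(i)(a). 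Your route is shorter and yields a stronger intermediate result. Second, for $\lambda^{\ast}(\mu)$ the paper applies the implicit function theorem to $\Psi(\mu,\lambda)=T_{\mu,\lambda}(\tilde{\alpha}_{\mu,\lambda})-L$, computes $\lambda^{\ast\prime}(\mu)>0$ from (\ref{Le4b}) and (\ref{Le4a}), gets $\lambda^{\ast}(\mu)\to\infty$ from $\lambda^{\ast}(\mu)>\lambda_{\mu}$, and handles $\mu\to0^{+}$ by a case split with a sandwich argument using $T_{0,\lambda}(\breve{\alpha})$. Your range argument (strict monotonicity together with surjectivity onto $(\kappa,\infty)$, obtained by inverting $\mu\mapsto T_{\mu,\lambda_{0}}(\tilde{\alpha}_{\mu,\lambda_{0}})$ at level $L$ for each $\lambda_{0}>\kappa$) yields continuity and both boundary limits at once, without IFT and without case analysis; the only thing you give up is $C^{1}$ regularity of $\lambda^{\ast}$, which the lemma does not ask for. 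The identification in (B) and the inequality $\lambda^{\ast}<\bar{\lambda}$ match the paper.
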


\begin{proof}
By Lemma \ref{Le6}(iii), the inverse function $\hat{\mu}^{-1}:\left(
0,\infty \right) \rightarrow \left( 0,c_{L}^{\ast }\right) $ exists, and it
is strictly increasing and continuous on $\left( 0,\infty \right) $. Let $%
\bar{\lambda}=\bar{\lambda}(\mu )\equiv \hat{\lambda}\circ \hat{\mu}%
^{-1}(\mu )$. By Lemma \ref{Le6}(iii) again, we see that $\bar{\lambda}$ is
a strictly increasing and continuous function on $\left( 0,\infty \right) $,%
\begin{equation}
\lim_{\mu \rightarrow 0^{+}}\bar{\lambda}(\mu )=4\kappa \text{ \ and \ }%
\lim_{\mu \rightarrow \infty }\bar{\lambda}(\mu )=\infty .  \label{77b}
\end{equation}%
Let $\mu >0$ be given. By Lemma \ref{Le6}(iii)(d), there exists $\alpha
_{1}\in (0,c_{L}^{\ast })$ such that $\mu =\hat{\mu}(\alpha _{1})$. Clearly,
\begin{equation*}
\bar{\lambda}=\bar{\lambda}(\mu )=\hat{\lambda}\circ \hat{\mu}^{-1}(\mu )=%
\hat{\lambda}(\alpha _{1}).
\end{equation*}%
By Lemmas \ref{LH} and \ref{Le6}(iii), we see that
\begin{equation*}
T_{\mu ,\bar{\lambda}}(\theta _{\mu ,\bar{\lambda}})=L\text{ \ and \ }\bar{%
\lambda}=\frac{\mu \alpha _{1}}{G(\alpha _{1})}>\frac{\mu c^{\ast }}{%
G(c^{\ast })}=\lambda _{\mu }.
\end{equation*}%
Suppose there exists $\lambda _{1}\in (\lambda _{\mu },\infty )\backslash \{%
\bar{\lambda}\}$ such that $T_{\mu ,\lambda _{1}}(\theta _{\mu ,\lambda
_{1}})=L$. By Lemma \ref{Le6}, we obtain
\begin{equation*}
\Phi (\theta _{\mu ,\lambda _{1}},\lambda _{1})=L\text{, \ }\lambda _{1}=%
\hat{\lambda}(\theta _{\mu ,\lambda _{1}})\text{ \ and \ }\mu =\hat{\mu}%
(\theta _{\mu ,\lambda _{1}}),
\end{equation*}%
from which it follows that $\lambda _{1}=\hat{\lambda}(\hat{\mu}^{-1}(\mu ))=%
\bar{\lambda}(\mu )=\bar{\lambda}$. It is a contradiction. Thus, by Lemma %
\ref{Le4}(i)(a) and continuity of $T_{\mu ,\lambda }(\theta _{\mu ,\lambda
}) $ with respect to $\lambda $, we obtain%
\begin{equation*}
T_{\mu ,\lambda }(\theta _{\mu ,\lambda })\left\{
\begin{array}{ll}
>L & \text{for }\lambda _{\mu }<\lambda <\bar{\lambda}, \\
=L & \text{for }\lambda =\bar{\lambda}, \\
<L & \text{for }\lambda >\bar{\lambda}.%
\end{array}%
\right.
\end{equation*}%
In addition, by Lemma \ref{Le4}(i)(b), there exists $\lambda ^{\ast
}=\lambda ^{\ast }(\mu )\in (\lambda _{\mu },\infty )$ such that
\begin{equation}
T_{\mu ,\lambda }(\tilde{\alpha}_{\mu ,\lambda })\left\{
\begin{array}{ll}
>L & \text{for }\lambda _{\mu }<\lambda <\lambda ^{\ast }, \\
=L & \text{for }\lambda =\lambda ^{\ast }, \\
<L & \text{for }\lambda >\lambda ^{\ast }.%
\end{array}%
\right.  \label{77d}
\end{equation}%
Let $\Psi (\mu ,\lambda )\equiv T_{\mu ,\lambda }(\tilde{\alpha}_{\mu
,\lambda })-L$. By (\ref{Le4e}) and (\ref{77d}), we have%
\begin{equation*}
\Psi (\mu ,\lambda ^{\ast })=0\text{ \ and \ }\left[ \frac{\partial }{%
\partial \lambda }\Psi (\mu ,\lambda )\right] _{\lambda =\lambda ^{\ast }}<0%
\text{ \ for }\mu >0.
\end{equation*}%
So by implicit function theorem, $\lambda ^{\ast }=\lambda ^{\ast }(\mu )$
is a continuously differentiable function on $\left( 0,\infty \right) $.
Moreover,
\begin{eqnarray*}
0 &=&\frac{\partial }{\partial \mu }\Psi (\mu ,\lambda ^{\ast }(\mu )) \\
&=&\left[ \frac{\partial }{\partial \mu }T_{\mu ,\lambda }(\alpha )\right]
_{\lambda =\lambda ^{\ast }\text{, }\alpha =\tilde{\alpha}_{\mu ,\lambda
^{\ast }}}+\left[ \frac{\partial }{\partial \lambda }T_{\mu ,\lambda
}(\alpha )\right] _{\lambda =\lambda ^{\ast }\text{, }\alpha =\tilde{\alpha}%
_{\mu ,\lambda ^{\ast }}}\lambda ^{\ast \prime }(\mu ).
\end{eqnarray*}%
So by (\ref{Le4b}) and (\ref{Le4a}), we obtain that%
\begin{equation}
\lambda ^{\ast \prime }(\mu )=-\frac{\left[ \frac{\partial }{\partial \mu }%
T_{\mu ,\lambda }(\alpha )\right] _{\lambda =\lambda ^{\ast }\text{, }\alpha
=\tilde{\alpha}_{\mu ,\lambda ^{\ast }}}}{\left[ \frac{\partial }{\partial
\lambda }T_{\mu ,\lambda }(\alpha )\right] _{\lambda =\lambda ^{\ast }\text{%
, }\alpha =\tilde{\alpha}_{\mu ,\lambda ^{\ast }}}}>0\text{ \ for }\mu >0.
\label{L8}
\end{equation}%
Since
\begin{equation*}
\lim_{\mu \rightarrow \infty }\lambda ^{\ast }(\mu )\geq \lim_{\mu
\rightarrow \infty }\lambda _{\mu }=\lim_{\mu \rightarrow \infty }\frac{\mu
}{g(u_{0})}=\infty ,
\end{equation*}%
we see that
\begin{equation*}
\lim_{\mu \rightarrow \infty }\lambda ^{\ast }(\mu )=\infty .
\end{equation*}%
If $\lambda ^{\ast }(\mu )\geq \bar{\lambda}(\mu )$ for some $\mu >0$, by (%
\ref{7b}), (\ref{77d}) and Lemma \ref{Le3}, then%
\begin{equation*}
T_{\mu ,\lambda ^{\ast }}(\theta _{\mu ,\lambda ^{\ast }})\leq L=T_{\mu
,\lambda ^{\ast }}(\tilde{\alpha}_{\mu ,\lambda ^{\ast }})<T_{\mu ,\lambda
^{\ast }}(\theta _{\mu ,\lambda ^{\ast }}),
\end{equation*}%
which is a contradiction. So
\begin{equation}
\lambda ^{\ast }=\lambda ^{\ast }(\mu )<\bar{\lambda}(\mu )=\bar{\lambda}%
\text{ \ for }\mu >0.  \label{77f}
\end{equation}%
Next, we consider two cases.

\smallskip Case 1. Assume that $g^{\prime }(0^{+})=\infty $. By (\ref{77b})
and (\ref{77f}), then $\lim_{\mu \rightarrow 0^{+}}\lambda ^{\ast }(\mu
)=0=\kappa $.

\smallskip Case 2. Assume that $g^{\prime }(0^{+})\in (0,\infty )$. By (\ref%
{77d}) and Lemma \ref{Le0}, we see that%
\begin{equation*}
L=T_{\mu ,\lambda ^{\ast }(\mu )}(\tilde{\alpha}_{\mu ,\lambda ^{\ast }(\mu
)})>T_{0,\lambda ^{\ast }(\mu )}(\tilde{\alpha}_{\mu ,\lambda ^{\ast }(\mu
)})>\eta =\frac{\pi }{2\sqrt{\lambda ^{\ast }(\mu )g^{\prime }(0^{+})}}\text{
\ for }\mu >0.
\end{equation*}%
It follows that
\begin{equation}
\kappa =\frac{\pi ^{2}}{4g^{\prime }(0^{+})L^{2}}<\lambda ^{\ast }(\mu )%
\text{ \ for }\mu >0.  \label{L8c}
\end{equation}%
Now, let $\lambda _{2}=\lim_{\mu \rightarrow 0^{+}}\lambda ^{\ast }(\mu )$.
By (\ref{L8}) and (\ref{L8c}), then $\kappa \leq \lambda _{2}<\lambda ^{\ast
}(\mu )$ for $\mu >0$. Take $\breve{\alpha}\in \left( 0,c^{\ast }\right) $.
By (\ref{Le4a}), (\ref{77d}), Lemmas \ref{Le0} and \ref{Le3}, we observe that%
\begin{equation}
L=\lim_{\mu \rightarrow 0^{+}}T_{\mu ,\lambda ^{\ast }}(\tilde{\alpha}_{\mu
,\lambda ^{\ast }})\leq \lim_{\mu \rightarrow 0^{+}}T_{\mu ,\lambda ^{\ast
}}(\breve{\alpha})\leq \lim_{\mu \rightarrow 0^{+}}T_{\mu ,\lambda _{2}}(%
\breve{\alpha})=T_{0,\lambda _{2}}(\breve{\alpha}).  \label{L8d}
\end{equation}%
Since $\breve{\alpha}$ is arbitrary, we take $\breve{\alpha}\rightarrow
0^{+} $, and by (\ref{L8d}), we obtain
\begin{equation*}
L\leq \eta =\frac{\pi }{2\sqrt{\lambda _{2}g^{\prime }(0^{+})}},
\end{equation*}%
which implies that
\begin{equation*}
\lambda _{2}\leq \frac{\pi ^{2}}{4g^{\prime }(0^{+})L^{2}}=\kappa .
\end{equation*}%
Thus, $\lim_{\mu \rightarrow 0^{+}}\lambda ^{\ast }(\mu )=\kappa $. The
proof is complete.
\end{proof}

\begin{lemma}
\label{Le8}Consider (\ref{eq1}). Let $\lambda >0$ be given. Then the
following statements (i)--(iv) hold.

\begin{itemize}
\item[(i)] If $0<L\leq \eta $, then $T_{\mu ,\lambda }(\alpha )>L$ for $%
\theta _{\mu ,\lambda }\leq \alpha <\beta _{\mu ,\lambda }$ and $\mu \in
(0,\mu _{\lambda }).$

\item[(ii)] If $L>\eta $, then there exists $\mu ^{\ast }\in (0,\mu
_{\lambda })$ such that%
\begin{equation}
T_{\mu ,\lambda }(\tilde{\alpha}_{\mu ,\lambda })\left\{
\begin{array}{ll}
<L & \text{for }0<\mu <\mu ^{\ast }, \\
=L & \text{for }\mu =\mu ^{\ast }, \\
>L & \text{for }\mu ^{\ast }<\mu <\mu _{\lambda }\text{.}%
\end{array}%
\right.  \label{7d}
\end{equation}%
Moreover, $\lim_{\mu \rightarrow 0^{+}}\tilde{\alpha}_{\mu ,\lambda }=0.$

\item[(iii)] If $\eta <L\leq 2\eta $, then $T_{\mu ,\lambda }(\theta _{\mu
,\lambda })>L$ for $\mu \in (0,\mu _{\lambda })$.

\item[(iv)] If $L>2\eta $, then there exists $\bar{\mu}\in (0,\mu ^{\ast })$
such that%
\begin{equation}
T_{\mu ,\lambda }(\theta _{\mu ,\lambda })\left\{
\begin{array}{ll}
<L & \text{for }0<\mu <\bar{\mu}, \\
=L & \text{for }\mu =\bar{\mu}, \\
>L & \text{for }\bar{\mu}<\mu <\mu _{\lambda }.%
\end{array}%
\right.  \label{7g}
\end{equation}%
Furthermore, $\bar{\mu}=\bar{\mu}(\lambda )$ is a strictly increasing and
continuous function on $\left( 4\kappa ,\infty \right) $,%
\begin{equation}
\lim_{\lambda \rightarrow \left( 4\kappa \right) ^{+}}\bar{\mu}(\lambda )=0%
\text{ \ and \ }\lim_{\lambda \rightarrow \infty }\bar{\mu}(\lambda )=\infty
\text{.}  \label{7c}
\end{equation}
\end{itemize}
\end{lemma}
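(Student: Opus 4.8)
The plan is to reduce all four parts to the behavior of the two one--parameter profiles $\mu\mapsto T_{\mu,\lambda}(\theta_{\mu,\lambda})$ and $\mu\mapsto T_{\mu,\lambda}(\tilde{\alpha}_{\mu,\lambda})$ on $(0,\mu_{\lambda})$. By Lemma \ref{Le3}, $\tilde{\alpha}_{\mu,\lambda}$ is the unique global minimum of $T_{\mu,\lambda}$ on $[\theta_{\mu,\lambda},\beta_{\mu,\lambda})$, so $T_{\mu,\lambda}(\tilde{\alpha}_{\mu,\lambda})$ is the least value of the time--map, and by Lemmas \ref{Le2} and \ref{Le3} one also has $T_{\mu,\lambda}(\tilde{\alpha}_{\mu,\lambda})<T_{\mu,\lambda}(\theta_{\mu,\lambda})$. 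Two preliminary observations drive everything. First, the definitions of $\eta$ and $\kappa$ give $L>\eta\iff\lambda>\kappa$ and $L>2\eta\iff\lambda>4\kappa$ (with the convention $\eta=\kappa=0$ when $g'(0^{+})=\infty$), which aligns the case split with the $\lambda$--range and explains why $\bar{\mu}(\lambda)$ in (iv) lives on $(4\kappa,\infty)$. Second, $\mu\mapsto T_{\mu,\lambda}(\theta_{\mu,\lambda})$ is strictly increasing on $(0,\mu_{\lambda})$: by (\ref{6a}) we have $T_{\mu,\lambda}(\theta_{\mu,\lambda})=\Phi(\theta_{\mu,\lambda},\lambda)$, and since $\theta_{\mu,\lambda}\in(0,c^{\ast})$ (Lemma \ref{Le1}(ii)) the chain rule together with $\frac{\partial}{\partial\alpha}\Phi(\alpha,\lambda)>0$ (assertion (\ref{Le5g})) and $\frac{\partial\theta_{\mu,\lambda}}{\partial\mu}>0$ (formula (\ref{0j})) yields $\frac{d}{d\mu}T_{\mu,\lambda}(\theta_{\mu,\lambda})>0$. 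Combined with Lemma \ref{Le4}(ii), this gives: $T_{\mu,\lambda}(\tilde{\alpha}_{\mu,\lambda})$ is strictly increasing and continuous from $\eta$ to $\infty$, and $T_{\mu,\lambda}(\theta_{\mu,\lambda})$ is strictly increasing and continuous from $2\eta$ to $\infty$.

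For part (i), when $0<L\le\eta$, every $\alpha\in[\theta_{\mu,\lambda},\beta_{\mu,\lambda})$ satisfies $T_{\mu,\lambda}(\alpha)\ge T_{\mu,\lambda}(\tilde{\alpha}_{\mu,\lambda})>T_{0,\lambda}(\tilde{\alpha}_{\mu,\lambda})>T_{0,\lambda}(0^{+})=\eta\ge L$, using Lemma \ref{Le3}, the comparison $T_{\mu,\lambda}>T_{0,\lambda}$ of Lemma \ref{Le0}(ii), and the strict monotonicity of $T_{0,\lambda}$ from Lemma \ref{Le0}(i). For part (ii), when $L>\eta$, applying the intermediate value theorem to the strictly increasing continuous map $\mu\mapsto T_{\mu,\lambda}(\tilde{\alpha}_{\mu,\lambda})$ (whose values fill $(\eta,\infty)$) produces the unique $\mu^{\ast}\in(0,\mu_{\lambda})$ of (\ref{7d}). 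For $\lim_{\mu\to0^{+}}\tilde{\alpha}_{\mu,\lambda}=0$ I would invert $T_{0,\lambda}$: by Lemma \ref{Le0}(i), $T_{0,\lambda}:(0,\sigma)\to(\eta,\infty)$ is a strictly increasing continuous bijection, so $T_{0,\lambda}^{-1}$ is increasing with $T_{0,\lambda}^{-1}(\eta^{+})=0$; since $T_{\mu,\lambda}(\tilde{\alpha}_{\mu,\lambda})>T_{0,\lambda}(\tilde{\alpha}_{\mu,\lambda})>\eta$, we get $0<\tilde{\alpha}_{\mu,\lambda}<T_{0,\lambda}^{-1}\!\left(T_{\mu,\lambda}(\tilde{\alpha}_{\mu,\lambda})\right)$, and the right side tends to $0$ as $\mu\to0^{+}$ by Lemma \ref{Le4}(ii)(b).

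For part (iii), when $L\le2\eta$, the strict monotonicity of $\mu\mapsto T_{\mu,\lambda}(\theta_{\mu,\lambda})$ together with $\lim_{\mu\to0^{+}}T_{\mu,\lambda}(\theta_{\mu,\lambda})=2\eta$ gives $T_{\mu,\lambda}(\theta_{\mu,\lambda})>2\eta\ge L$ for all $\mu\in(0,\mu_{\lambda})$. For part (iv), when $L>2\eta$ (equivalently $\lambda>4\kappa$), the intermediate value theorem for the strictly increasing continuous $\mu\mapsto T_{\mu,\lambda}(\theta_{\mu,\lambda})$ (with limits $2\eta<L$ and $\infty$) yields the unique $\bar{\mu}\in(0,\mu_{\lambda})$ of (\ref{7g}); and $\bar{\mu}<\mu^{\ast}$ because $T_{\bar{\mu},\lambda}(\tilde{\alpha}_{\bar{\mu},\lambda})<T_{\bar{\mu},\lambda}(\theta_{\bar{\mu},\lambda})=L$, which by (\ref{7d}) forces $\bar{\mu}<\mu^{\ast}$. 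Finally, the relation $T_{\mu,\lambda}(\theta_{\mu,\lambda})=L$ (equivalently $\Phi(\theta_{\mu,\lambda},\lambda)=L$) characterizes exactly the graph $\{(\mu,\bar{\lambda}(\mu)):\mu>0\}$ of the function $\bar{\lambda}$ of Lemma \ref{Le7}, which is a strictly increasing continuous bijection from $(0,\infty)$ onto $(4\kappa,\infty)$; hence $\bar{\mu}(\lambda)=\bar{\lambda}^{-1}(\lambda)$, which is therefore strictly increasing and continuous on $(4\kappa,\infty)$, and (\ref{7c}) follows from $\bar{\lambda}(0^{+})=4\kappa$ and $\bar{\lambda}(\infty)=\infty$.

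The main obstacle is not a hard estimate but identifying the right reductions. The cited lemmas give only that $\mu\mapsto T_{\mu,\lambda}(\theta_{\mu,\lambda})$ is \emph{continuous}; upgrading it to strictly increasing --- which is precisely what delivers the unique turning values $\mu^{\ast}$, $\bar{\mu}$ and their trichotomies --- forces one through the representation (\ref{6a}) and the positivity (\ref{Le5g}). The second key move is recognizing that $\bar{\mu}(\lambda)$ is nothing but the inverse of Lemma \ref{Le7}'s $\bar{\lambda}(\mu)$, which hands over its monotonicity, continuity, and the two boundary limits in (\ref{7c}) at no extra cost. The only mildly delicate step is the $T_{0,\lambda}^{-1}$ squeeze establishing $\lim_{\mu\to0^{+}}\tilde{\alpha}_{\mu,\lambda}=0$.
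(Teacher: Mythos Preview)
Your argument is correct, and in places sharper than the paper's. The paper handles (i) and (ii) essentially as you do (for $\lim_{\mu\to 0^{+}}\tilde{\alpha}_{\mu,\lambda}=0$ it squeezes $T_{0,\lambda}(\tilde{\alpha}_{\mu,\lambda})$ between $\eta$ and $T_{\mu,\lambda}(\tilde{\alpha}_{\mu,\lambda})\to\eta$, which is your $T_{0,\lambda}^{-1}$ argument in disguise). The real divergence is in (iii)--(iv). The paper never proves that $\mu\mapsto T_{\mu,\lambda}(\theta_{\mu,\lambda})$ is strictly increasing; for (iii) it argues by contradiction, observing that $T_{\mu_{1},\lambda}(\theta_{\mu_{1},\lambda})=L$ would force $\lambda=\bar{\lambda}(\mu_{1})>4\kappa$ via Lemma~\ref{Le7}, incompatible with $L\le 2\eta$; and for (iv) it constructs $\bar{\mu}$ as $\hat{\mu}\circ\hat{\lambda}^{-1}(\lambda)$ from Lemma~\ref{Le6}(iii) and then proves uniqueness of the crossing by pulling back through $\hat{\lambda}$. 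Your route---reading off strict monotonicity of $\mu\mapsto T_{\mu,\lambda}(\theta_{\mu,\lambda})=\Phi(\theta_{\mu,\lambda},\lambda)$ from (\ref{Le5g}) and (\ref{0j})---makes the trichotomy (\ref{7g}) and the case (iii) immediate, and your identification $\bar{\mu}=\bar{\lambda}^{-1}$ is exactly the paper's $\hat{\mu}\circ\hat{\lambda}^{-1}$ (since $\bar{\lambda}=\hat{\lambda}\circ\hat{\mu}^{-1}$ in Lemma~\ref{Le7}). What you gain is a shorter, more transparent proof; what the paper's approach buys is that it never needs the differentiability of $\Phi$ in $\alpha$, relying only on the continuity statements already recorded in Lemma~\ref{Le4}(ii)(a). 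You also make explicit the inequality $\bar{\mu}<\mu^{\ast}$, which the paper's proof leaves implicit.
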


\begin{proof}
(I) If $0<L\leq \eta $, by Lemmas \ref{Le3} and \ref{Le4}(ii)(b), then%
\begin{equation*}
L\leq \eta =\lim_{\mu \rightarrow 0^{+}}T_{\mu ,\lambda }(\tilde{\alpha}%
_{\mu ,\lambda })<T_{\mu ,\lambda }(\tilde{\alpha}_{\mu ,\lambda })\leq
T_{\mu ,\lambda }(\alpha )\text{ }
\end{equation*}%
\ for $\theta _{\mu ,\lambda }\leq \alpha <\beta _{\mu ,\lambda }$ and $\mu
\in (0,\mu _{\lambda })$. The statement (i) holds.

(II) Let $L>\eta $. By Lemma \ref{Le4}(ii)(b), there exists $\mu ^{\ast }\in
(0,\mu _{\lambda })$ such that (\ref{7d}) holds. In addition, by Lemmas \ref%
{Le0} and \ref{Le4}, we see that%
\begin{equation*}
\eta \leq \lim_{\mu \rightarrow 0^{+}}T_{0,\lambda }(\tilde{\alpha}_{\mu
,\lambda })\leq \lim_{\mu \rightarrow 0^{+}}T_{\mu ,\lambda }(\tilde{\alpha}%
_{\mu ,\lambda })=\eta ,
\end{equation*}%
which implies that $T_{0,\lambda }(\lim_{\mu \rightarrow 0^{+}}\tilde{\alpha}%
_{\mu ,\lambda })=0$. So by Lemma \ref{Le0} again, we obtain $\lim_{\mu
\rightarrow 0^{+}}\tilde{\alpha}_{\mu ,\lambda }=0.$ The statement (ii)
holds.

(III) Let $\eta <L\leq 2\eta $. Suppose that there exists $\mu _{1}\in
(0,\mu _{\lambda })$ such that $T_{\mu _{1},\lambda }(\theta _{\mu
_{1},\lambda })=L$. By Lemma \ref{Le7}, then $\lambda =\bar{\lambda}(\mu
_{1})>4\kappa $. Since $L\leq 2\eta $, we see that
\begin{equation*}
L^{2}\leq 4\eta ^{2}=\frac{\pi ^{2}}{\lambda g^{\prime }(0^{+})}<\frac{\pi
^{2}}{4\kappa g^{\prime }(0^{+})}=L^{2},
\end{equation*}%
which is a contradiction. Thus $T_{\mu ,\lambda }(\theta _{\mu ,\lambda
})\neq L$ for $\mu \in (0,\mu _{\lambda })$. Then by Lemma \ref{Le4}(ii)(a),
we obtain $T_{\mu ,\lambda }(\theta _{\mu ,\lambda })>L$ for $\mu \in (0,\mu
_{\lambda })$. The statement (iii) holds.

(IV) Let $L>2\eta $. By Lemma \ref{Le6}(iii), the inverse function $\hat{%
\lambda}^{-1}:\left( 4\kappa ,\infty \right) \rightarrow \left(
0,c_{L}^{\ast }\right) $ exists, and it is strictly increasing and
continuous on $\left( 4\kappa ,\infty \right) $. Let $\bar{\mu}=\bar{\mu}%
(\lambda )\equiv \hat{\mu}\circ \hat{\lambda}^{-1}(\lambda )$. By Lemma \ref%
{Le6}(iii) again, then $\bar{\mu}$ is a strictly increasing and continuous
function on $\left( 4\kappa ,\infty \right) $, and (\ref{7c}) holds. Since%
\begin{equation*}
L>2\eta =\left\{
\begin{array}{ll}
\frac{\pi }{\sqrt{\lambda g^{\prime }(0^{+})}} & \text{if }g^{\prime
}(0^{+})\in (0,\infty ), \\
0 & \text{if }g^{\prime }(0^{+})=\infty ,%
\end{array}%
\right.
\end{equation*}%
we observe that $\lambda >4\kappa $. Let $\alpha _{1}=\hat{\lambda}%
^{-1}(\lambda )$. It implies that $\lambda =\hat{\lambda}(\alpha _{1})$.
Furthermore,%
\begin{equation*}
\Phi (\alpha _{1},\lambda )=\Phi (\alpha _{1},\hat{\lambda}(\alpha _{1}))=L%
\text{ \ and \ }\bar{\mu}=\bar{\mu}(\lambda )\equiv \hat{\mu}(\alpha _{1}).
\end{equation*}%
So by Lemma \ref{Le6}(iii), then
\begin{equation*}
T_{\bar{\mu},\lambda }(\theta _{\bar{\mu},\lambda })=T_{\hat{\mu},\hat{%
\lambda}}(\theta _{\hat{\mu},\hat{\lambda}})=L\text{ \ and \ }\alpha
_{1}=\theta _{\hat{\mu},\hat{\lambda}}=\theta _{\bar{\mu},\lambda }.
\end{equation*}%
Suppose there exists $\mu _{1}\in (0,\mu _{\lambda })$ such that $T_{\mu
_{1},\lambda }(\theta _{\mu _{1},\lambda })=L$. It follows that $\Phi
(\theta _{\mu _{1},\lambda },\lambda )=L$. Furthermore,
\begin{equation*}
\hat{\lambda}(\theta _{\mu _{1},\lambda })=\lambda =\hat{\lambda}(\alpha
_{1})=\hat{\lambda}(\theta _{\bar{\mu},\lambda }).
\end{equation*}%
Then $\theta _{\bar{\mu},\lambda }=\theta _{\mu _{1},\lambda }$. So by Lemma %
\ref{Le1}(ii), then $\bar{\mu}=\mu _{1}$. Then (\ref{7g}) holds by Lemma \ref%
{Le4}(ii)(a) and continuity of $T_{\mu ,\lambda }(\theta _{\mu ,\lambda })$
respect to $\lambda $. The proof is complete.
\end{proof}

\begin{figure}[h]
\centering
\includegraphics[width=6.3in,height=2.1in,keepaspectratio]{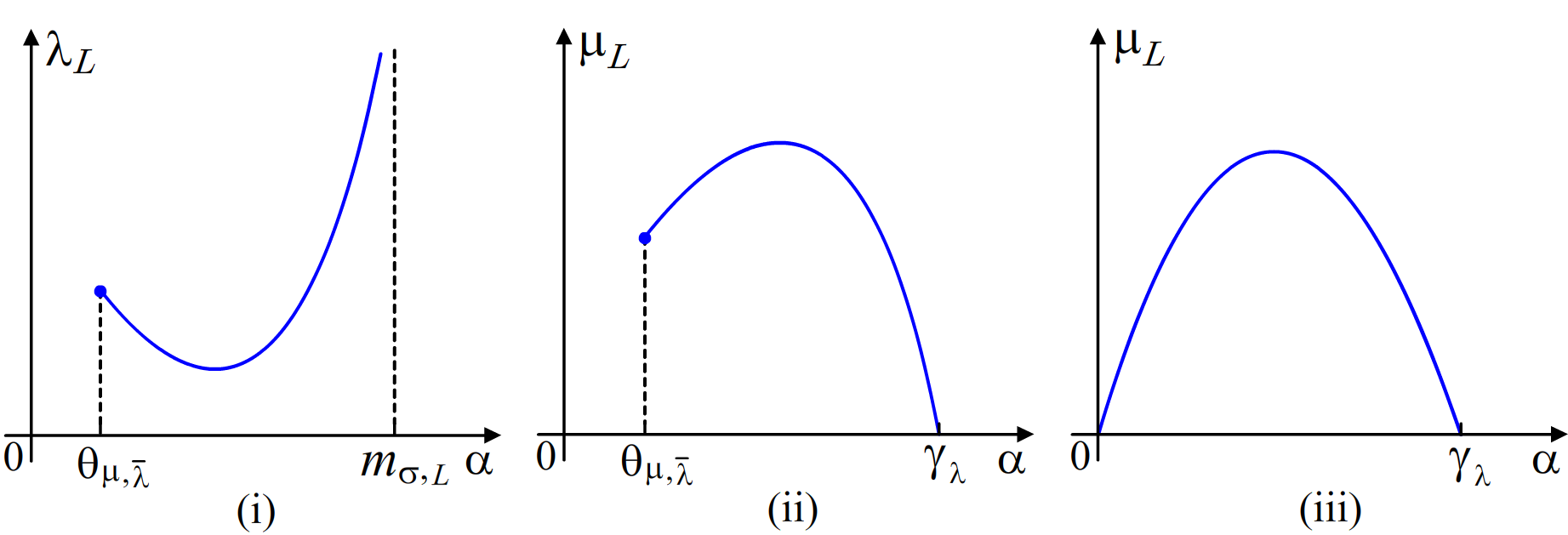}
\caption{i) The graph of $\protect\lambda _{L}(\protect\alpha )$ on $[%
\protect\theta _{\protect\mu ,\bar{\protect\lambda}},m_{\protect\sigma ,L})$%
. (ii) The graph of $\protect\mu _{L}(\protect\alpha )$ on $[\protect\theta %
_{\protect\mu ,\bar{\protect\lambda}},\protect\gamma _{\protect\sigma ,L})$
if $L>2\protect\eta $. (iii) The graph of $\protect\mu _{L}(\protect\alpha )$
on $(0,\protect\gamma _{\protect\sigma ,L})$ if $L\leq 2\protect\eta $.}
\label{fig4}
\end{figure}

\begin{lemma}[See Figure \protect\ref{fig4}(i)]
\label{Le9}Consider (\ref{eq1}). Let $\mu >0$ be given. Then the following
statements (i)--(iv) hold.

\begin{itemize}
\item[(i)] There exists a positive function $\lambda _{L}(\alpha )$ on $%
[\theta _{\mu ,\bar{\lambda}},m_{\sigma ,L})$ such that
\begin{equation}
T_{\mu ,\lambda _{L}(\alpha )}(\alpha )=L\text{ \ and \ }\lambda _{L}(\theta
_{\mu ,\bar{\lambda}})=\bar{\lambda}  \label{p1a}
\end{equation}%
where $\bar{\lambda}$ is defined in Lemma \ref{Le7}.

\item[(ii)] $\lambda _{L}(\alpha )\in C^{1}(\theta _{\mu ,\bar{\lambda}%
},m_{\sigma ,L})$ and
\begin{equation*}
sgn\left( \lambda _{L}^{\prime }(\alpha )\right) =sgn\left( T_{\mu ,\lambda
_{L}(\alpha )}^{\prime }(\alpha )\right) \text{ \ for }\alpha \in \left(
\theta _{\mu ,\bar{\lambda}},m_{\sigma ,L}\right) .
\end{equation*}

\item[(iii)] The bifurcation curve $S_{\mu }=\left\{ \left( \lambda
_{L}(\alpha ),\alpha \right) :\alpha \in \lbrack \theta _{\mu ,\bar{\lambda}%
},m_{\sigma ,L})\right\} $ is continuous on the $\left( \lambda ,\left\Vert
u_{\lambda }\right\Vert _{\infty }\right) $-plane.

\item[(iv)] $\lim_{\alpha \rightarrow m_{\sigma ,L}^{-}}\lambda _{L}(\alpha
)=\infty .$
\end{itemize}
\end{lemma}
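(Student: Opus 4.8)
The plan is to reparametrize the curve $S_{\mu}$ of (\ref{S}) by the height $\alpha=\Vert u\Vert_{\infty}$ rather than by $\lambda$. The structural fact that makes this work is (\ref{Le4a}): for each fixed $\alpha$, the time-map $T_{\mu,\lambda}(\alpha)$ is \emph{strictly decreasing} in $\lambda$ on the admissible set $I_{\alpha}:=\{\lambda>\lambda_{\mu}:\theta_{\mu,\lambda}\le\alpha<\beta_{\mu,\lambda}\}$, so $T_{\mu,\lambda}(\alpha)=L$ has \emph{at most one} solution $\lambda$; this already forces $S_{\mu}$, wherever it is a graph, to be the graph of a function $\lambda_{L}$ of $\alpha$. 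By Lemma~\ref{Le1}(i), $\theta_{\mu,\cdot}$ decreases from $c^{\ast}$ to $0$ and $\beta_{\mu,\cdot}$ increases from $c^{\ast}$ to $\sigma$ as $\lambda$ runs over $(\lambda_{\mu},\infty)$; hence $I_{\alpha}=[\lambda_{\alpha},\infty)$ when $\alpha<c^{\ast}$, where $\theta_{\mu,\lambda_{\alpha}}=\alpha$, i.e. $\lambda_{\alpha}=\mu\alpha/G(\alpha)$ (and $\lambda_{\alpha}>\lambda_{\mu}$ because $G(u)/u$ is increasing on $(0,c^{\ast})$ by Lemma~\ref{LH}(ii)), while $I_{\alpha}=(\lambda'_{\alpha},\infty)$ when $c^{\ast}\le\alpha<\sigma$, where $\beta_{\mu,\lambda'_{\alpha}}=\alpha$ (this second case being vacuous when $c^{\ast}\ge L$). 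I will show that for every $\alpha\in[\theta_{\mu,\bar{\lambda}},m_{\sigma,L})$ the value of $T_{\mu,\lambda}(\alpha)$ at the left end of $I_{\alpha}$ is $\ge L$, with equality exactly when $\alpha=\theta_{\mu,\bar{\lambda}}$, while $T_{\mu,\lambda}(\alpha)\to\alpha<L$ as $\lambda\to\infty$; the intermediate value theorem then produces $\lambda_{L}(\alpha)$.

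For (i): when $\theta_{\mu,\bar{\lambda}}\le\alpha<c^{\ast}$, at $\lambda=\lambda_{\alpha}$ we have $\alpha=\theta_{\mu,\lambda_{\alpha}}$, and since $G(u)/u$ is increasing on $(0,c^{\ast})$ one gets $\lambda_{\mu}<\lambda_{\alpha}\le\bar{\lambda}$ with equality iff $\alpha=\theta_{\mu,\bar{\lambda}}$; by Lemma~\ref{Le7}, $T_{\mu,\lambda_{\alpha}}(\alpha)=T_{\mu,\lambda_{\alpha}}(\theta_{\mu,\lambda_{\alpha}})\ge L$, with equality iff $\alpha=\theta_{\mu,\bar{\lambda}}$. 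When $c^{\ast}\le\alpha<m_{\sigma,L}$, as $\lambda\downarrow\lambda'_{\alpha}$ we have $\beta_{\mu,\lambda}\downarrow\alpha$, and $T_{\mu,\lambda}(\alpha)\to+\infty$: this upgrades the blow-up $T_{\mu,\lambda}(\beta_{\mu,\lambda}^{-})=\infty$ of Lemma~\ref{Le2} to the limit $\lambda\downarrow\lambda'_{\alpha}$ by monotone convergence in (\ref{T}), using that $B(\alpha,\alpha t)$ is increasing in $\lambda$ and its limit vanishes quadratically at $t=1$ since $f_{\mu,\lambda'_{\alpha}}(\alpha)=f_{\mu,\lambda'_{\alpha}}(\beta_{\mu,\lambda'_{\alpha}})=0$ and $f'_{\mu,\lambda'_{\alpha}}(\alpha)=\lambda'_{\alpha}g'(\beta_{\mu,\lambda'_{\alpha}})<0$. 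For the right end, $B(\alpha,\alpha t)\to+\infty$ as $\lambda\to\infty$ for each $t\in(0,1)$ because $G(\alpha)>G(\alpha t)$, and $(B+1)/\sqrt{B^{2}+2B}$ decreases to $1$ as $B\uparrow\infty$; by dominated convergence $T_{\mu,\lambda}(\alpha)\to\int_{0}^{1}\alpha\,dt=\alpha$, and $\alpha<m_{\sigma,L}\le L$. Since $T_{\mu,\lambda}(\alpha)$ is strictly decreasing and continuous in $\lambda$ on $I_{\alpha}$, there is a unique $\lambda_{L}(\alpha)\in I_{\alpha}$ with $T_{\mu,\lambda_{L}(\alpha)}(\alpha)=L$; it satisfies $\lambda_{L}(\alpha)>\lambda_{\mu}>0$, and $\lambda_{L}(\theta_{\mu,\bar{\lambda}})=\bar{\lambda}$ (the left end of $I_{\theta_{\mu,\bar{\lambda}}}$), which is (\ref{p1a}). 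Running the same endpoint analysis for $\alpha<\theta_{\mu,\bar{\lambda}}$ (where the left-end value of $T_{\mu,\lambda}(\alpha)$ drops below $L$) and for $\alpha\ge m_{\sigma,L}$ (where $T_{\mu,\lambda}(\alpha)>\alpha\ge L$, or $\alpha\ge\sigma$) shows no positive solution has $\Vert u\Vert_{\infty}$ outside $[\theta_{\mu,\bar{\lambda}},m_{\sigma,L})$; with (\ref{S}) this gives $S_{\mu}=\{(\lambda_{L}(\alpha),\alpha):\alpha\in[\theta_{\mu,\bar{\lambda}},m_{\sigma,L})\}$.

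For (ii): on the open interval $(\theta_{\mu,\bar{\lambda}},m_{\sigma,L})$ the value $\lambda_{L}(\alpha)$ lies strictly interior to $I_{\alpha}$, so $\alpha\in(\theta_{\mu,\lambda_{L}(\alpha)},\beta_{\mu,\lambda_{L}(\alpha)})$ and $T_{\mu,\lambda}(\alpha)$ is jointly $C^{1}$ near $(\lambda_{L}(\alpha),\alpha)$; applying the implicit function theorem to $H(\alpha,\lambda):=T_{\mu,\lambda}(\alpha)-L$, whose $\lambda$-derivative is negative by (\ref{Le4a}), yields $\lambda_{L}\in C^{1}(\theta_{\mu,\bar{\lambda}},m_{\sigma,L})$ with $\lambda_{L}^{\prime}(\alpha)=-T_{\mu,\lambda_{L}(\alpha)}^{\prime}(\alpha)/\bigl[\partial_{\lambda}T_{\mu,\lambda}(\alpha)\bigr]_{\lambda=\lambda_{L}(\alpha)}$, and since the denominator is negative, $\operatorname{sgn}(\lambda_{L}^{\prime}(\alpha))=\operatorname{sgn}(T_{\mu,\lambda_{L}(\alpha)}^{\prime}(\alpha))$, which is (ii). For (iii) it remains to check continuity of $\lambda_{L}$ at $\theta_{\mu,\bar{\lambda}}$: if $\alpha_{n}\downarrow\theta_{\mu,\bar{\lambda}}$ and $\lambda_{L}(\alpha_{n})$ had a subsequence tending to $\infty$, then $T_{\mu,\lambda_{L}(\alpha_{n})}(\alpha_{n})\to\theta_{\mu,\bar{\lambda}}<L$ by the locally uniform limit $T_{\mu,\lambda}(\alpha)\to\alpha$ as $\lambda\to\infty$ (obtained by splitting the integral (\ref{T}) near $t=1$), contradicting $T_{\mu,\lambda_{L}(\alpha_{n})}(\alpha_{n})=L$; and any finite subsequential limit $\Lambda$ satisfies $T_{\mu,\Lambda}(\theta_{\mu,\bar{\lambda}})=L$ by continuity, so $\Lambda=\bar{\lambda}$ by the uniqueness noted above. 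Hence $\lambda_{L}$ is continuous on $[\theta_{\mu,\bar{\lambda}},m_{\sigma,L})$ and $S_{\mu}$ is a continuous curve.

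For (iv): from $B(\alpha,\alpha t)>0$ for $0<t<1$ (by (\ref{B})) we always have $T_{\mu,\lambda}(\alpha)>\int_{0}^{1}\alpha\,dt=\alpha$. Suppose $\alpha_{n}\uparrow m_{\sigma,L}$ but $\lambda_{L}(\alpha_{n_{k}})\to\Lambda<\infty$. Since $\theta_{\mu,\lambda_{L}(\alpha_{n})}\le\alpha_{n}<\beta_{\mu,\lambda_{L}(\alpha_{n})}$ and $\beta_{\mu,\Lambda}<\sigma$, passing to the limit gives $m_{\sigma,L}\in[\theta_{\mu,\Lambda},\beta_{\mu,\Lambda}]\subset(0,\sigma)$. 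If $m_{\sigma,L}<\beta_{\mu,\Lambda}$, continuity forces $T_{\mu,\Lambda}(m_{\sigma,L})=L$, but $T_{\mu,\Lambda}(m_{\sigma,L})>m_{\sigma,L}=\min\{\sigma,L\}$ then forces $\sigma<L$, hence $m_{\sigma,L}=\sigma<\beta_{\mu,\Lambda}<\sigma$, absurd. If $m_{\sigma,L}=\beta_{\mu,\Lambda}$, then $\alpha_{n_{k}}\to\beta_{\mu,\Lambda}^{-}$, and Fatou's lemma applied to (\ref{T}) gives $\liminf_{k}T_{\mu,\lambda_{L}(\alpha_{n_{k}})}(\alpha_{n_{k}})\ge\int_{0}^{1}\frac{\beta_{\mu,\Lambda}(B_{\ast}(t)+1)}{\sqrt{B_{\ast}^{2}(t)+2B_{\ast}(t)}}\,dt=+\infty$, where $B_{\ast}(t)=F_{\mu,\Lambda}(\beta_{\mu,\Lambda})-F_{\mu,\Lambda}(\beta_{\mu,\Lambda}t)$ vanishes quadratically at $t=1$ (again $f_{\mu,\Lambda}(\beta_{\mu,\Lambda})=0$, $f_{\mu,\Lambda}^{\prime}(\beta_{\mu,\Lambda})<0$), contradicting $T_{\mu,\lambda_{L}(\alpha_{n_{k}})}(\alpha_{n_{k}})=L$. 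Hence $\lambda_{L}(\alpha)\to\infty$ as $\alpha\to m_{\sigma,L}^{-}$. The main obstacle is exactly these blow-up limits of the time-map — $\lim_{\lambda\downarrow\lambda'_{\alpha}}T_{\mu,\lambda}(\alpha)=+\infty$ in (i) and the Fatou estimate in (iv) — together with the locally uniform convergence $T_{\mu,\lambda}(\alpha)\to\alpha$ needed in (iii); all three reduce to controlling the singular integrand $(B+1)/\sqrt{B^{2}+2B}$ when $B$ vanishes quadratically at an endpoint or grows linearly in $\lambda$, which is routine but must be carried out carefully.
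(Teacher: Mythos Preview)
Your proof is correct and follows essentially the same strategy as the paper's: use the strict monotonicity (\ref{Le4a}) of $T_{\mu,\lambda}(\alpha)$ in $\lambda$ together with the endpoint behaviour of $T_{\mu,\lambda}(\alpha)$ on $I_{\alpha}$ to produce a unique $\lambda_{L}(\alpha)$, then apply the implicit function theorem for (ii), and argue (iii)--(iv) by contradiction. The only noteworthy differences are tactical: for continuity at $\theta_{\mu,\bar\lambda}$ the paper bounds $\lambda_{L}(\alpha)\le\bar\lambda$ directly on a right neighbourhood (via (\ref{Le4a}) and (\ref{9c})) and then squeezes using $\theta_{\mu,\lambda_{L}(\alpha)}\le\alpha$, which avoids your locally-uniform convergence $T_{\mu,\lambda}(\alpha)\to\alpha$; and for (iv) the paper, in the case $m_{\sigma,L}=L<\sigma$, picks a fixed $\lambda_{7}>\Lambda$ with $L\in(\theta_{\mu,\lambda_{7}},\beta_{\mu,\lambda_{7}})$ and uses $T_{\mu,\lambda_{7}}(\alpha_{n})\le T_{\mu,\lambda_{L}(\alpha_{n})}(\alpha_{n})=L$ together with $T_{\mu,\lambda_{7}}(L)>L$, which is shorter than your Fatou argument.
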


\begin{proof}
(I) We consider four cases.

Case 1. $\alpha \in (0,\theta _{\mu ,\bar{\lambda}})$. Suppose there exists $%
\lambda _{1}>\lambda _{\mu }$ such that $T_{\mu ,\lambda _{1}}(\alpha )=L$.
Since $\theta _{\mu ,\lambda _{1}}\leq \alpha <\theta _{\mu ,\bar{\lambda}}$%
, and by Lemma \ref{Le1}, we see that $\lambda _{1}>\bar{\lambda}$ and there
exists $\lambda _{2}\in (\bar{\lambda},\lambda _{1}]$ such that $\alpha
=\theta _{\mu ,\lambda _{2}}$. Then by Lemma \ref{Le7}, we observe that%
\begin{equation*}
T_{\mu ,\lambda _{1}}(\alpha )=L=T_{\mu ,\bar{\lambda}}(\theta _{\mu ,\bar{%
\lambda}})>T_{\mu ,\lambda _{2}}(\theta _{\mu ,\lambda _{2}})=T_{\mu
,\lambda _{2}}(\alpha ),
\end{equation*}%
which is a contradiction by (\ref{Le4a}). Thus $T_{\mu ,\lambda }(\alpha
)\neq L$ for $\lambda >\lambda _{\mu }.$

Case 2. $\alpha \in \lbrack m_{\sigma ,L},\infty )$. Suppose there exists $%
\lambda _{3}>\lambda _{\mu }$ such that $T_{\mu ,\lambda _{3}}(\alpha )=L$.
It follows that%
\begin{equation*}
L=T_{\mu ,\lambda _{3}}(\alpha )>\alpha \geq m_{\sigma ,L}\geq L,
\end{equation*}%
which is a contradiction. Thus $T_{\mu ,\lambda }(\alpha )\neq L$ for $%
\lambda >\lambda _{\mu }.$

Case 3. $\alpha \in \lbrack \theta _{\mu ,\bar{\lambda}},c^{\ast })$. By
Lemma \ref{Le1}, there exists $\lambda _{4}\in (\lambda _{\mu },\bar{\lambda}%
]$ such that $\alpha =\theta _{\mu ,\lambda _{4}}$. By Lemma \ref{Le7}, then
\begin{equation}
T_{\mu ,\lambda _{4}}(\alpha )=T_{\mu ,\lambda _{4}}(\theta _{\mu ,\lambda
_{4}})\geq L\geq m_{\sigma ,L}>\alpha =\lim_{\lambda \rightarrow \infty
}T_{\mu ,\lambda }(\alpha ).  \label{9a}
\end{equation}%
So by (\ref{Le4a}) and continuity of $T_{\mu ,\lambda }(\alpha )$ with
respect to $\lambda $, there exists unique $\lambda _{L}=\lambda _{L}(\alpha
)\geq \lambda _{4}$ such that $T_{\mu ,\lambda _{L}(\alpha )}(\alpha )=L$.
Since%
\begin{equation*}
T_{\mu ,\lambda _{L}(\theta _{\mu ,\bar{\lambda}})}(\theta _{\mu ,\bar{%
\lambda}})=L=T_{\mu ,\bar{\lambda}}(\theta _{\mu ,\bar{\lambda}}),
\end{equation*}%
we observe that $\lambda _{L}(\theta _{\mu ,\bar{\lambda}})=\bar{\lambda}.$

Case 4. $\alpha \in \lbrack c^{\ast },m_{\sigma ,L})$. By Lemmas \ref{Le1}
and \ref{Le2}, there exists $\lambda _{5}\in (\lambda _{\mu },\infty )$ such
that
\begin{equation*}
\alpha \in \lbrack \theta _{\mu ,\lambda _{5}},\beta _{\mu ,\lambda _{5}})%
\text{ \ and \ }T_{\mu ,\lambda _{5}}(\alpha )>L.
\end{equation*}%
By the similar argument in Case 3, there exists unique $\lambda _{L}=\lambda
_{L}(\alpha )\geq \lambda _{5}$ such that $T_{\mu ,\lambda _{L}}(\alpha )=L$
and $\lambda _{L}(\theta _{\mu ,\bar{\lambda}})=\bar{\lambda}.$

\noindent By Cases 1--4, there exists a positive function $\lambda
_{L}(\alpha )$ on $[\theta _{\mu ,\bar{\lambda}},m_{\sigma ,L})$ such that (%
\ref{p1a}) holds.

\smallskip (II) By (\ref{Le4a}) and implicit function theorem, $\lambda
_{L}=\lambda _{L}(\alpha )$ is a continuously differentiable function on $%
\left( \theta _{\mu ,\bar{\lambda}},m_{\sigma ,L}\right) $. Moreover,%
\begin{equation*}
0=\frac{\partial }{\partial \alpha }L=\frac{\partial }{\partial \alpha }%
T_{\mu ,\lambda _{L}(\alpha )}(\alpha )=T_{\mu ,\lambda _{L}(\alpha
)}^{\prime }(\alpha )+\left[ \frac{\partial }{\partial \lambda }T_{\mu
,\lambda }(\alpha )\right] _{\lambda =\lambda _{L}(\alpha )}\lambda
_{L}^{\prime }(\alpha ).
\end{equation*}%
So the statement (ii) holds by (\ref{Le4a}).

\smallskip (III) By Lemmas \ref{Le2} and \ref{Le3}, there exists $\omega \in
\left( \theta _{\mu ,\bar{\lambda}},\beta _{\mu ,\bar{\lambda}}\right) $
such that%
\begin{equation}
T_{\mu ,\bar{\lambda}}(\theta _{\mu ,\bar{\lambda}})=T_{\mu ,\bar{\lambda}%
}(\omega )=L>T_{\mu ,\bar{\lambda}}(\alpha )\text{ \ for }\theta _{\mu ,\bar{%
\lambda}}<\alpha <\omega ,  \label{9c}
\end{equation}%
see Figure \ref{fig7}.
\begin{figure}[h]
\centering
\includegraphics[width=3.2in,height=2.3in,keepaspectratio]{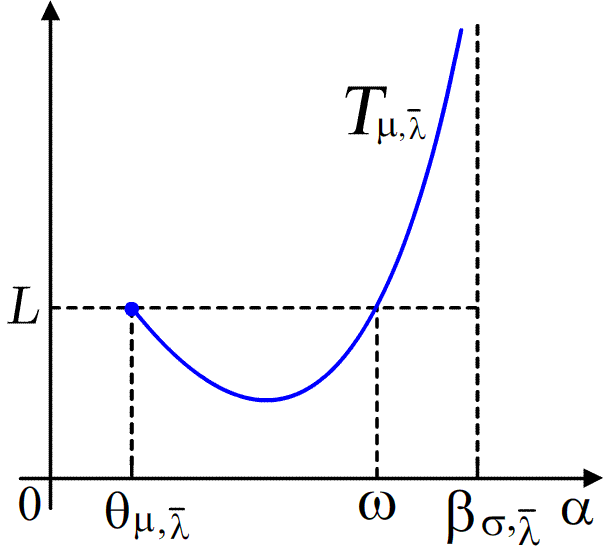}
\caption{The graph of $T_{\protect\mu ,\bar{\protect\lambda}}(\protect\alpha %
)$ on $[\protect\theta _{\protect\mu ,\bar{\protect\lambda}},\protect\beta _{%
\protect\mu ,\bar{\protect\lambda}}).$}
\label{fig7}
\end{figure}
Suppose $\lambda _{L}(\alpha )>\bar{\lambda}$ for some $\alpha \in (\theta
_{\mu ,\bar{\lambda}},\omega )$. By (\ref{Le4a}) and (\ref{9c}), we observe
that%
\begin{equation*}
T_{\mu ,\lambda _{L}(\alpha )}(\alpha )=L=T_{\mu ,\bar{\lambda}}(\theta
_{\mu ,\bar{\lambda}})>T_{\mu ,\bar{\lambda}}(\alpha )>T_{\mu ,\lambda
_{L}(\alpha )}(\alpha ),
\end{equation*}%
which is a contradiction. Thus $\lambda _{L}(\alpha )\leq \bar{\lambda}$ for
$\alpha \in (\theta _{\mu ,\bar{\lambda}},\omega )$. By (\ref{T}) and (\ref%
{p1a}), we have%
\begin{equation}
\theta _{\mu ,\lambda _{L}(\alpha )}\leq \alpha <\beta _{\mu ,\lambda
_{L}(\alpha )}\text{ \ for }\alpha \in (\theta _{\mu ,\bar{\lambda}},\omega
).  \label{9b}
\end{equation}%
By Lemma \ref{Le1} and (\ref{9b}), we obtain%
\begin{equation*}
\theta _{\mu ,\bar{\lambda}}\leq \lim_{\alpha \rightarrow \theta _{\mu ,\bar{%
\lambda}}^{+}}\theta _{\mu ,\lambda _{L}(\alpha )}\leq \lim_{\alpha
\rightarrow \theta _{\mu ,\bar{\lambda}}^{+}}\alpha =\theta _{\mu ,\bar{%
\lambda}}.
\end{equation*}%
Then $\lim_{\alpha \rightarrow \theta _{\mu ,\bar{\lambda}}^{+}}\lambda
_{L}(\alpha )=\bar{\lambda}=\lambda _{L}(\theta _{\mu ,\bar{\lambda}})$,
which implies that $\lambda _{L}(\alpha )$ is continuous on $[\theta _{\mu ,%
\bar{\lambda}},m_{\sigma ,L})$. Moreover, by (\ref{S}),
\begin{equation*}
S_{\mu }=\left\{ \left( \lambda _{L}(\alpha ),\alpha \right) :\alpha \in
\lbrack \theta _{\mu ,\bar{\lambda}},m_{\sigma ,L})\right\}
\end{equation*}%
is continuos. The statement (iii) holds.

\smallskip (IV). By (\ref{T}) and (\ref{p1a}), we have $\lambda _{L}(\alpha
)>\lambda _{\mu }$ on $[\theta _{\mu ,\bar{\lambda}},m_{\sigma ,L})$. Let $%
\lambda _{6}=\liminf_{\alpha \rightarrow m_{\sigma ,L}^{-}}\lambda
_{L}(\alpha )$. Clearly, $\lambda _{6}\in (\lambda _{\mu },\infty ]$.
Suppose $\lambda _{6}<\infty $. We consider two cases:

Case 1. $L\geq \sigma $. Clearly, $m_{\sigma ,L}=\sigma $. By (\ref{9b}) and
Lemma \ref{Le1}, we observe that%
\begin{equation*}
m_{\sigma ,L}=\liminf_{\alpha \rightarrow m_{\sigma ,L}^{-}}\alpha \leq
\liminf_{\alpha \rightarrow m_{\sigma ,L}^{-}}\beta _{\mu ,\lambda
_{L}(\alpha )}=\beta _{\mu ,\lambda _{6}}<\sigma ,
\end{equation*}%
which is a contradiction.

Case 2. $L<\sigma $. Clearly, $m_{\sigma ,L}=L$. By Lemma \ref{Le1}, there
exists $\lambda _{7}\in (\lambda _{6},\infty )$ such that $L\in (\theta
_{\mu ,\lambda _{7}},\beta _{\mu ,\lambda _{7}})$. Since
\begin{equation*}
\liminf_{\alpha \rightarrow L^{-}}\lambda _{L}(\alpha )=\lambda _{6}<\infty ,
\end{equation*}%
there exists a sequence $\{\alpha _{n}\in (\theta _{\mu ,\lambda _{7}},\beta
_{\mu ,\lambda _{7}})\}$ such that $\lim_{n\rightarrow \infty }\alpha _{n}=L$%
, $\alpha _{n}<L$ and $\lambda _{L}(\alpha _{n})<\lambda _{7}$ for $n\in
\mathbb{N}$. Then by (\ref{Le4a}) and (\ref{p1a}), we see that
\begin{equation*}
L<T_{\mu ,\lambda _{7}}(L)=\lim_{n\rightarrow \infty }T_{\mu ,\lambda
_{7}}(\alpha _{n})\leq \lim_{n\rightarrow \infty }T_{\mu ,\lambda
_{L}(\alpha _{n})}(\alpha _{n})=L,
\end{equation*}%
which is a contradiction.

\noindent Thus by Cases 1--2, $\lambda _{6}=\infty $. The statement (iv)
holds. The proof is complete.
\end{proof}

\smallskip

Using a proof similar to that of Lemma \ref{Le9}, we obtain the following
Lemma \ref{Le10}. Due to the length of the proof, the proof is given in the
Appendix.

\smallskip

\begin{lemma}[See Figure \protect\ref{fig4}(ii)(iii)]
\label{Le10}Consider (\ref{eq1}). Let $\lambda >0$ be given. Assume that $%
L>\eta $. Let $\gamma _{\lambda }$ be defined in Lemma \ref{Le0}. Then the
following statements (i)--(v) hold.

\begin{itemize}
\item[(i)] There exists a positive function $\mu _{L}(\alpha )$ on the
interval $I$ such that $T_{\mu _{L}(\alpha ),\lambda }(\alpha )=L$ where $%
\bar{\mu}$ is defined in Lemma \ref{Le8} and%
\begin{equation*}
I\equiv \left\{
\begin{array}{ll}
\lbrack \theta _{\bar{\mu},\lambda },\gamma _{\lambda }) & \text{if }L>2\eta
\text{,}\smallskip \\
(0,\gamma _{\lambda }) & \text{if }\eta <L\leq 2\eta .%
\end{array}%
\right.
\end{equation*}%
Furthermore, $\mu _{L}(\theta _{\bar{\mu},\lambda })=\bar{\mu}$ if $L>2\eta $%
, and $\mu _{L}(0^{+})=0$ if $\eta <L\leq 2\eta $.

\item[(ii)] $\mu _{L}(\alpha )\in C^{1}(\mathring{I})$ and
\begin{equation}
sgn\left( \mu _{L}^{\prime }(\alpha )\right) =-sgn\left( T_{\mu _{L}(\alpha
),\lambda }^{\prime }(\alpha )\right) \text{ \ for }\alpha \in \mathring{I}
\label{10a}
\end{equation}%
where%
\begin{equation*}
\mathring{I}\equiv \left\{
\begin{array}{ll}
\left( \theta _{\bar{\mu},\lambda },\gamma _{\lambda }\right) & \text{if }%
L>2\eta \text{,}\smallskip \\
(0,\gamma _{\lambda }) & \text{if }\eta <L\leq 2\eta .%
\end{array}%
\right.
\end{equation*}

\item[(iii)] The bifurcation curve $\Sigma _{\lambda }=\left\{ \left( \mu
_{L}(\alpha ),\alpha \right) :\alpha \in I\right\} $ is continuous on the $%
\left( \lambda ,\left\Vert u_{\lambda }\right\Vert _{\infty }\right) $-plane.

\item[(iv)] $\lim_{\alpha \rightarrow \gamma _{\lambda }}\mu _{L}(\alpha
)=0. $
\end{itemize}
\end{lemma}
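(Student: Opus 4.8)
The plan is to follow, \emph{mutatis mutandis}, the four-part proof of Lemma \ref{Le9}, with the monotonicity of $T_{\mu,\lambda}(\alpha)$ in $\lambda$ (decreasing, by (\ref{Le4a})) replaced by its monotonicity in $\mu$ (increasing, by (\ref{Le4b})), the endpoint $m_{\sigma,L}$ replaced by $\gamma_\lambda$, and the starting value $\bar{\lambda}$ of Lemma \ref{Le7} replaced by $\bar{\mu}$ of Lemma \ref{Le8}(iv) when $L>2\eta$, or by $0$ when $\eta<L\le 2\eta$.

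For the existence of $\mu_L$ in part (i), I would first rule out solutions outside $I$. If $\alpha\ge\sigma$ then $\alpha\notin[\theta_{\mu,\lambda},\beta_{\mu,\lambda})$ since $\beta_{\mu,\lambda}<\sigma$, and if $\gamma_\lambda\le\alpha<\sigma$ then Lemma \ref{Le0} gives $T_{\mu,\lambda}(\alpha)>T_{0,\lambda}(\alpha)\ge T_{0,\lambda}(\gamma_\lambda)=L$ wherever $T_{\mu,\lambda}(\alpha)$ is defined; when $L>2\eta$ and $\alpha\in(0,\theta_{\bar{\mu},\lambda})$, Lemma \ref{Le1}(ii) supplies a unique $\mu'\in(0,\bar{\mu})$ with $\theta_{\mu',\lambda}=\alpha$, and then $\partial_\mu T_{\mu,\lambda}(\alpha)>0$ together with Lemma \ref{Le8}(iv) gives $T_{\mu,\lambda}(\alpha)\le T_{\mu',\lambda}(\theta_{\mu',\lambda})<L$ for every admissible $\mu$; in all these cases $T_{\mu,\lambda}(\alpha)=L$ has no solution. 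For $\alpha\in I$ I would distinguish $\alpha<c^\ast$ and $c^\ast\le\alpha<\gamma_\lambda$ exactly as in Cases 3 and 4 of the proof of Lemma \ref{Le9}: in the first case Lemma \ref{Le1}(ii) gives $\mu_\alpha\in(0,\mu_\lambda)$ with $\theta_{\mu_\alpha,\lambda}=\alpha$, at which Lemma \ref{Le8}(iii)--(iv) yields $T_{\mu_\alpha,\lambda}(\alpha)=T_{\mu_\alpha,\lambda}(\theta_{\mu_\alpha,\lambda})\ge L$, while Lemma \ref{Le0}(ii) gives $\lim_{\mu\to0^+}T_{\mu,\lambda}(\alpha)=T_{0,\lambda}(\alpha)<T_{0,\lambda}(\gamma_\lambda)=L$; in the second case an argument like Case 4 of Lemma \ref{Le9} (via Lemmas \ref{Le1} and \ref{Le2}) provides some $\mu_5$ with $\alpha\in[\theta_{\mu_5,\lambda},\beta_{\mu_5,\lambda})$ and $T_{\mu_5,\lambda}(\alpha)>L$. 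In either case, strict monotonicity of $T_{\mu,\lambda}(\alpha)$ in $\mu$ and the intermediate value theorem give a unique $\mu_L(\alpha)$ with $T_{\mu_L(\alpha),\lambda}(\alpha)=L$, and $\mu_L(\alpha)\le\mu_\alpha$ when $\alpha<c^\ast$. Evaluating at the endpoint yields $\mu_L(\theta_{\bar{\mu},\lambda})=\bar{\mu}$ when $L>2\eta$; when $\eta<L\le2\eta$, since $\theta_{\mu_\alpha,\lambda}=\alpha\to0$ forces $\mu_\alpha\to0$ by Lemma \ref{Le1}(ii), one gets $\mu_L(0^+)=0$.

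For part (ii), on $\mathring{I}$ the number $\alpha$ lies in the open interval $(\theta_{\mu_L(\alpha),\lambda},\beta_{\mu_L(\alpha),\lambda})$, so $T_{\mu,\lambda}(\alpha)$ is $C^1$ in $\mu$ with $\partial_\mu T_{\mu,\lambda}(\alpha)>0$ by (\ref{Le4b}); the implicit function theorem applied to $T_{\mu,\lambda}(\alpha)-L=0$ gives $\mu_L\in C^1(\mathring{I})$, and differentiating the identity $T_{\mu_L(\alpha),\lambda}(\alpha)=L$ in $\alpha$ produces $T_{\mu_L(\alpha),\lambda}^{\prime}(\alpha)+[\partial_\mu T_{\mu,\lambda}(\alpha)]_{\mu=\mu_L(\alpha)}\,\mu_L^{\prime}(\alpha)=0$, which is (\ref{10a}). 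For part (iii), when $\eta<L\le2\eta$ continuity on $I=\mathring{I}$ is immediate from part (ii) and $\mu_L(0^+)=0$; when $L>2\eta$, only continuity at the left endpoint is at stake, and I would argue as in part (III) of the proof of Lemma \ref{Le9}: Lemmas \ref{Le2} and \ref{Le3} give $\omega\in(\theta_{\bar{\mu},\lambda},\beta_{\bar{\mu},\lambda})$ with $T_{\bar{\mu},\lambda}(\theta_{\bar{\mu},\lambda})=T_{\bar{\mu},\lambda}(\omega)=L>T_{\bar{\mu},\lambda}(\alpha)$ on $(\theta_{\bar{\mu},\lambda},\omega)$; if $\mu_L(\alpha)<\bar{\mu}$ there, then $\partial_\mu T>0$ forces $T_{\mu_L(\alpha),\lambda}(\alpha)<T_{\bar{\mu},\lambda}(\alpha)<L$, a contradiction, so $\mu_L(\alpha)\ge\bar{\mu}$ and hence $\theta_{\bar{\mu},\lambda}\le\theta_{\mu_L(\alpha),\lambda}\le\alpha$; letting $\alpha\to\theta_{\bar{\mu},\lambda}^+$ and using that $\mu\mapsto\theta_{\mu,\lambda}$ is a homeomorphism onto its image (Lemma \ref{Le1}(ii)) forces $\mu_L(\alpha)\to\bar{\mu}=\mu_L(\theta_{\bar{\mu},\lambda})$. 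Then $\Sigma_\lambda=\{(\mu_L(\alpha),\alpha):\alpha\in I\}$ is continuous by (\ref{C}).

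For part (iv), I would suppose $\mu_L(\alpha)\not\to0$ as $\alpha\to\gamma_\lambda^-$, extract $\alpha_n\uparrow\gamma_\lambda$ with $\mu_L(\alpha_n)\to\mu_\ast>0$, and pass to the limit in $T_{\mu_L(\alpha_n),\lambda}(\alpha_n)=L$; continuity of the time map and Lemma \ref{Le0}(ii) then give $L=T_{\mu_\ast,\lambda}(\gamma_\lambda)>T_{0,\lambda}(\gamma_\lambda)=L$, a contradiction. I expect the main obstacle to be twofold: first, the endpoint continuity in part (iii) when $L>2\eta$, namely the ``dip'' argument with $\omega$ and the squeeze through $\theta_{\mu_L(\alpha),\lambda}$; and second, the degenerate boundary case in part (iv) in which the limiting harvesting value $\mu_\ast$ satisfies $\beta_{\mu_\ast,\lambda}=\gamma_\lambda$, which must be excluded separately using the blow-up $T_{\mu,\lambda}(\beta_{\mu,\lambda}^-)=\infty$ of Lemma \ref{Le2} together with $\alpha_n<\beta_{\mu_L(\alpha_n),\lambda}$.
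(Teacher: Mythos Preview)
Your proposal is correct and follows essentially the same strategy as the paper's proof (Appendix, Steps 1--6). Two minor differences are worth noting. In part (i), you split the interval $I$ at $c^{\ast}$, while the paper splits it at $\theta_{\mu^{\ast},\lambda}$ and $\tilde{\alpha}_{\mu^{\ast},\lambda}$ (using the critical value $\mu^{\ast}$ from Lemma \ref{Le8}(ii)); both decompositions work, and yours is arguably tidier since it only invokes the map $\alpha\mapsto\mu_\alpha$ via $\theta_{\mu_\alpha,\lambda}=\alpha$. For part (iv), however, the paper avoids your anticipated ``degenerate boundary case'' altogether: instead of passing to the limit $T_{\mu_\ast,\lambda}(\gamma_\lambda)$ directly, it fixes any $\mu_8\in(0,\limsup\mu_L(\alpha))$ with $\theta_{\mu_8,\lambda}<\gamma_\lambda<\beta_{\mu_8,\lambda}$, picks a subsequence with $\mu_L(\alpha_n)>\mu_8$, and uses $\partial_\mu T>0$ to get $L=T_{\mu_L(\alpha_n),\lambda}(\alpha_n)\ge T_{\mu_8,\lambda}(\alpha_n)\to T_{\mu_8,\lambda}(\gamma_\lambda)>T_{0,\lambda}(\gamma_\lambda)=L$. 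This comparison with a \emph{fixed} smaller $\mu_8$ sidesteps the question of whether $\gamma_\lambda<\beta_{\mu_\ast,\lambda}$ and makes the blow-up argument from Lemma \ref{Le2} unnecessary.
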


\section{Proofs of Main Results}

In this section, we present the proofs of Theorems \ref{T1}--\ref{T2}.

\begin{proof}[Proof of Theorem \protect\ref{T1}]
(I) By Lemma \ref{Le9}, we see that the bifurcation curve $S_{\mu }$ is
continuous, starts from $(\bar{\lambda},\left\Vert u_{\bar{\lambda}%
}\right\Vert _{\infty })=(\bar{\lambda},\theta _{\mu ,\bar{\lambda}})$ and
goes to $(\infty ,m_{\sigma ,L})$. By Lemmas \ref{Le3} and \ref{Le7}, we
have $T_{\mu ,\lambda ^{\ast }}(\tilde{\alpha}_{\mu ,\lambda ^{\ast }})=L$
and $T_{\mu ,\lambda ^{\ast }}^{\prime }(\tilde{\alpha}_{\mu ,\lambda ^{\ast
}})=0$. So by Lemma \ref{Le9}, we obtain%
\begin{equation}
\lambda ^{\ast }=\lambda _{L}(\tilde{\alpha}_{\mu ,\lambda ^{\ast }})\text{
\ and \ }\lambda _{L}^{\prime }(\tilde{\alpha}_{\mu ,\lambda ^{\ast }})=0.
\label{p1b}
\end{equation}%
Suppose there exists $\alpha _{1},\alpha _{2}\in \lbrack \theta _{\mu ,\bar{%
\lambda}},m_{\sigma ,L})$ such that $\lambda _{L}^{\prime }(\alpha
_{1})=\lambda _{L}^{\prime }(\alpha _{2})=0$. Let $\lambda _{1}=\lambda
_{L}(\alpha _{1})$ and $\lambda _{2}=\lambda _{L}(\alpha _{1})$. By Lemma %
\ref{Le9}, we have%
\begin{equation*}
T_{\mu ,\lambda _{1}}(\alpha _{1})=T_{\mu ,\lambda _{2}}(\alpha _{1})=L\text{
\ and \ }T_{\mu ,\lambda _{1}}^{\prime }(\alpha _{1})=T_{\mu ,\lambda
_{2}}^{\prime }(\alpha _{1})=0,
\end{equation*}%
which, by Lemmas \ref{Le3} and \ref{Le7}, implies that $\alpha _{1}=\tilde{%
\alpha}_{\mu ,\lambda _{1}}=\tilde{\alpha}_{\mu ,\lambda _{2}}=\alpha _{2}$.
Thus by (\ref{p1b}),
\begin{equation}
\lambda _{L}(\alpha )\text{ has exactly one critical number }\tilde{\alpha}%
_{\mu ,\lambda ^{\ast }}\text{\ on }[\theta _{\mu ,\bar{\lambda}},m_{\sigma
,L}).  \label{p1c}
\end{equation}

Let $\lambda _{1}\in (\lambda ^{\ast },\bar{\lambda})$ be given. By Lemma %
\ref{Le7}, we obtain $T_{\mu ,\lambda _{1}}(\tilde{\alpha}_{\mu ,\lambda
_{1}})<L<T_{\mu ,\lambda _{1}}(\theta _{\mu ,\lambda _{1}})$. So by Lemma %
\ref{Le3}, there exists $\alpha _{1}\in \left( \theta _{\mu ,\lambda _{1}},%
\tilde{\alpha}_{\mu ,\lambda _{1}}\right) $ such that
\begin{equation*}
T_{\mu ,\lambda _{1}}(\alpha _{1})=L\text{ \ and \ }T_{\mu ,\lambda
_{1}}^{\prime }(\alpha _{1})<0,
\end{equation*}%
which, by Lemma \ref{Le9}, implies that $\lambda _{1}=\lambda _{L}(\alpha
_{1})$\ and $\lambda _{L}^{\prime }(\alpha _{1})<0$. So by Lemma \ref{Le9}
and (\ref{p1c}), we obtain%
\begin{equation*}
\lambda _{L}^{\prime }(\alpha )\left\{
\begin{array}{ll}
<0 & \text{for }\theta _{\mu ,\bar{\lambda}}<\alpha <\tilde{\alpha}_{\mu
,\lambda ^{\ast }}, \\
=0 & \text{for }\alpha =\tilde{\alpha}_{\mu ,\lambda ^{\ast }}, \\
>0 & \text{for }\tilde{\alpha}_{\mu ,\lambda ^{\ast }}<\alpha <m_{\sigma ,L}.%
\end{array}%
\right.
\end{equation*}%
It implies that $S_{\mu }$ is $\subset $-shape. The statements (i)(a) and
(i)(b) follows by Lemma \ref{Le7}.
\end{proof}

\smallskip

\begin{proof}[Proof of Theorem \protect\ref{T3}]
(I) Assume that $g^{\prime }(0^{+})\in (0,\infty )$. If $0<\lambda \leq
\kappa $, by (\ref{K}) and (\ref{E}), we obtain%
\begin{equation*}
L\leq \frac{\pi }{2\sqrt{g^{\prime }(0^{+})\lambda }}=\eta ,
\end{equation*}%
which, by (\ref{C}) and Lemma \ref{Le8}(i), implies that the bifurcation
curve $\Sigma _{\lambda }$ does not exist. The statement (i)(a) holds.

If $\kappa <\lambda \leq 4\kappa $, by (\ref{K}) and (\ref{E}), we obtain%
\begin{equation*}
\eta =\frac{\pi }{2\sqrt{g^{\prime }(0^{+})\lambda }}<L\leq \frac{\pi }{%
\sqrt{g^{\prime }(0^{+})\lambda }}=2\eta .
\end{equation*}%
So by Lemma \ref{Le10}, the bifurcation curve $\Sigma _{\lambda }$ is
continuous, starts from $(0,0)$ and goes to $(0,\gamma _{\lambda })$. By the
similar argument in the proof of Theorem \ref{T1}, we see that $\mu
_{L}(\alpha )$ has exactly one critical number $\tilde{\alpha}_{\mu ^{\ast
},\lambda }$ on $\left( 0,\gamma _{\lambda }\right) $. Thus
\begin{equation}
\mu _{L}^{\prime }(\alpha )\left\{
\begin{array}{ll}
>0 & \text{for }0<\alpha <\tilde{\alpha}_{\mu ^{\ast },\lambda }, \\
=0 & \text{for }\alpha =\tilde{\alpha}_{\mu ^{\ast },\lambda }, \\
<0 & \text{for }\tilde{\alpha}_{\mu ^{\ast },\lambda }<\alpha <\gamma
_{\lambda }.%
\end{array}%
\right.  \label{t3a}
\end{equation}%
The statement (i)(b) holds.

(II) Assume that $g^{\prime }(0^{+})\in (0,\infty ]$. If $\lambda >4\kappa $%
, by (\ref{K}) and (\ref{E}), we obtain $L>2\eta $. So by Lemma \ref{Le10},
the bifurcation curve $\Sigma _{\lambda }$ is continuous, starts from $%
\left( \bar{\mu},\left\Vert u_{\bar{\mu}}\right\Vert _{\infty }\right) =(%
\bar{\mu},\theta _{\bar{\mu},\lambda })$ and goes to $(0,\gamma _{\lambda })$%
. By the similar argument in the proof of Theorem \ref{T1}, we see that $\mu
_{L}(\alpha )$ has exactly one critical number $\tilde{\alpha}_{\mu ^{\ast
},\lambda }$ on $\left( 0,\gamma _{\lambda }\right) $. Let $\mu _{1}\in (%
\bar{\mu},\mu ^{\ast })$ be given. By Lemma \ref{Le8}, we obtain $T_{\mu
_{1},\lambda }(\tilde{\alpha}_{\mu _{1},\lambda })<L<T_{\mu _{1},\lambda
}(\theta _{\mu _{1},\lambda })$. So by Lemma \ref{Le3}, there exists $\alpha
_{1}\in \left( \theta _{\mu _{1},\lambda },\tilde{\alpha}_{\mu _{1},\lambda
}\right) $ such that
\begin{equation*}
T_{\mu _{1},\lambda }(\alpha _{1})=L\text{ \ and \ }T_{\mu _{1},\lambda
}^{\prime }(\alpha _{1})<0,
\end{equation*}%
which, by Lemma \ref{Le10}, implies that $\mu _{1}=\mu _{L}(\alpha _{1})$\
and $\mu _{L}^{\prime }(\alpha _{1})>0$. So (\ref{t3a}) holds. Finally, (\ref%
{T3A}) holds by (\ref{7c}) and Lemma \ref{Le1}.
\end{proof}

\smallskip

\begin{proof}[Proof of Theorem \protect\ref{T2}]
Theorem \ref{T2} follows by Theorem \ref{T1} and Lemma \ref{Le7}.
\end{proof}

\section{Appendix-the proof of Lemma \protect\ref{Le10}}

We divide the proof of Lemma \ref{Le10} into the following six steps.

\smallskip

\textbf{Step 1.} We prove that $\tilde{\alpha}_{\mu ^{\ast },\lambda
}<\gamma _{\lambda }$. By Lemmas \ref{Le0} and \ref{Le8}, we see that%
\begin{equation*}
T_{0,\lambda }(\gamma _{\lambda })=L=T_{\mu ^{\ast },\lambda }(\tilde{\alpha}%
_{\mu ^{\ast },\lambda })>T_{0,\lambda }(\tilde{\alpha}_{\mu ^{\ast
},\lambda }),
\end{equation*}%
which implies that $\tilde{\alpha}_{\mu ^{\ast },\lambda }<\gamma _{\lambda
} $.

\smallskip

\textbf{Step 2.} We prove the statement (i) if $L>2\eta $. By Step 1, we
consider four cases.

\smallskip Case 1. $\alpha \in (0,\theta _{\bar{\mu},\lambda })$. Suppose
there exists $\mu _{1}\in \left( 0,\mu _{\lambda }\right) $ such that $%
T_{\mu _{1},\lambda }(\alpha )=L$. Since $\theta _{\mu _{1},\lambda }<\alpha
<\theta _{\bar{\mu},\lambda }$, and by Lemma \ref{Le1}, we see that $\mu
_{1}<\bar{\mu}$ and there exists $\mu _{2}\in (\mu _{1},\bar{\mu})$ such
that $\alpha =\theta _{\mu _{2},\lambda }$. Then by Lemma \ref{Le8}, we
observe that%
\begin{equation*}
T_{\mu _{1},\lambda }(\alpha )=L=T_{\bar{\mu},\lambda }(\theta _{\bar{\mu}%
,\lambda })>T_{\mu _{2},\lambda }(\theta _{\mu _{2},\lambda })=T_{\mu
_{2},\lambda }(\alpha ),
\end{equation*}%
which is a contradiction by (\ref{Le4b}). Thus $T_{\mu ,\lambda }(\alpha
)\neq L$ for $\mu \in \left( 0,\mu _{\lambda }\right) .$

\smallskip Case 2. $\alpha \in \lbrack \gamma _{\lambda },\infty )$. Suppose
there exists $\mu _{3}\in \left( 0,\mu _{\lambda }\right) $ such that $%
T_{\mu _{3},\lambda }(\alpha )=L$. By Lemma \ref{Le0}, we see that%
\begin{equation*}
L=T_{\mu _{3},\lambda }(\alpha )>T_{0,\lambda }(\alpha )\geq T_{0,\lambda
}(\gamma _{\lambda })=L,
\end{equation*}%
which is a contradiction. Thus $T_{\mu ,\lambda }(\alpha )\neq L$ for $\mu
\in \left( 0,\mu _{\lambda }\right) .$

\smallskip Case 3. $\alpha \in \lbrack \theta _{\bar{\mu},\lambda },\theta
_{\mu ^{\ast },\lambda }]$. There exists $\mu _{4}\in \lbrack \bar{\mu},\mu
^{\ast }]$ such that $\alpha =\theta _{\mu _{4},\lambda }$. By Lemma \ref%
{Le8}(iv), we see that%
\begin{equation*}
T_{0,\lambda }(\alpha )<L\leq T_{\mu _{4},\lambda }(\theta _{\mu
_{4},\lambda })=T_{\mu _{4},\lambda }(\alpha ).
\end{equation*}%
So by (\ref{Le4b}), there exists unique $\mu _{L}=\mu _{L}(\alpha )\in
\left( 0,\mu ^{\ast }\right) $ such that $T_{\mu _{L},\lambda }(\alpha )=L$.
Notice that $\mu _{4}=\bar{\mu}$ as $\alpha =\theta _{\bar{\mu},\lambda }$.
Thus $\mu _{L}(\theta _{\bar{\mu},\lambda })=\bar{\mu}$.

\smallskip Case 4. $\alpha \in (\theta _{\mu ^{\ast },\lambda },\tilde{\alpha%
}_{\mu ^{\ast },\lambda }]$. Since $T_{0,\lambda }(\alpha )<L\leq T_{\mu
^{\ast },\lambda }(\alpha )$, and by (\ref{Le4b}), there exists unique $\mu
_{L}=\mu _{L}(\alpha )\in \left( 0,\mu ^{\ast }\right) $ such that $T_{\mu
_{L},\lambda }(\alpha )=L$.

\smallskip Case 5. $\alpha \in (\tilde{\alpha}_{\mu ^{\ast },\lambda
},\gamma _{\lambda })$. By Lemmas \ref{Le0}, \ref{Le2} and \ref{Le3}, there
exists $\mu _{5}\in (0,\mu ^{\ast }]$ such that
\begin{equation}
\theta _{\mu _{5},\lambda }\leq \theta _{\mu ^{\ast },\lambda }<\tilde{\alpha%
}_{\mu ^{\ast },\lambda }<\alpha <\beta _{\mu _{5},\lambda }\text{ \ and \ }%
T_{\mu _{5},\lambda }(\alpha )>L.  \label{A1}
\end{equation}%
Since $T_{0,\lambda }(\alpha )<L$, and by (\ref{Le4b}) and (\ref{A1}), there
exists unique $\mu _{L}=\mu _{L}(\alpha )\in \left( 0,\mu ^{\ast }\right) $
such that $T_{\mu _{L},\lambda }(\alpha )=L$.

\medskip

\noindent By Cases 1--5, there exists a positive function $\mu _{L}(\alpha )$
on $[\theta _{\bar{\mu},\lambda },\gamma _{\lambda })$ such that $T_{\mu
_{L}(\alpha ),\lambda }(\alpha )=L$.

\smallskip

\textbf{Step 3.} We prove the statement (i) if $\eta <L\leq 2\eta $. If $%
g^{\prime }(0^{+})=\infty $, then the case $0=\eta <L\leq 2\eta =0$ does not
exist. Hence, we only consider the case $g^{\prime }(0^{+})\in (0,\infty )$.
It implies that $\eta >0$. By Step 1, we consider three cases.

\smallskip Case 1. $\alpha \in (0,\theta _{\mu ^{\ast },\lambda }]$. Since $%
0<\alpha \leq \theta _{\mu ^{\ast },\lambda }$, and by Lemma \ref{Le1},
there exists $\mu _{6}\in (0,\mu ^{\ast }]$ such that $\alpha =\theta _{\mu
_{6},\lambda }$. So by Step 1, Lemmas \ref{Le0} and \ref{Le8}, we see that%
\begin{equation*}
T_{\mu _{6},\lambda }(\alpha )=T_{\mu _{6},\lambda }(\theta _{\mu
_{6},\lambda })>L=T_{0,\lambda }(\gamma _{\lambda })>T_{0,\lambda }(\tilde{%
\alpha}_{\mu ^{\ast },\lambda })>T_{0,\lambda }(\theta _{\mu ^{\ast
},\lambda })\geq T_{0,\lambda }(\alpha ).
\end{equation*}%
By (\ref{Le4b}), there exists unique $\mu _{L}=\mu _{L}(\alpha )\in \left(
0,\mu _{6}\right) $ such that $T_{\mu _{L},\lambda }(\alpha )=L$. Obviously,
by Lemma \ref{Le1}, we observe that $\mu _{6}\rightarrow 0^{+}$ as $\alpha
\rightarrow 0^{+}$. So $\mu _{L}(0^{+})=0$.

\smallskip Case 2. $\alpha \in (\theta _{\mu ^{\ast },\lambda },\gamma
_{\lambda })$. The same arguments used in Cases 4 and 5 of Step 2 can be
applied to prove that there exists unique $\mu _{L}=\mu _{L}(\alpha )\in
\left( 0,\mu ^{\ast }\right) $ such that $T_{\mu _{L},\lambda }(\alpha )=L$.

\smallskip Case 3. $\alpha \in \lbrack \gamma _{\lambda },\infty )$. The
same arguments used in Case 2 of Step 2 can be applied to prove that $T_{\mu
,\lambda }(\alpha )\neq L$ for $\mu \in \left( 0,\mu _{\lambda }\right) .$

\medskip

\noindent By Cases 1--3, there exists a positive function $\mu _{L}(\alpha )$
on $(0,\gamma _{\lambda })$ such that $T_{\mu _{L}(\alpha ),\lambda }(\alpha
)=L$.

\smallskip

\textbf{Step 4.} We prove the statement (ii). By (\ref{Le4b}) and implicit
function theorem, $\mu _{L}=\mu _{L}(\alpha )$ is a continuously
differentiable function on $\mathring{I}$. Moreover,%
\begin{equation*}
0=\frac{\partial }{\partial \alpha }L=\frac{\partial }{\partial \alpha }%
T_{\mu _{L}(\alpha ),\lambda }(\alpha )=T_{\mu _{L}(\alpha ),\lambda
}^{\prime }(\alpha )+\left[ \frac{\partial }{\partial \mu }T_{\mu ,\lambda
}(\alpha )\right] _{\mu =\mu _{L}(\alpha )}\mu _{L}^{\prime }(\alpha )\text{,%
}
\end{equation*}%
which implies that (\ref{10a}) holds by (\ref{Le4b}).

\smallskip \textbf{Step 5.} We prove the statement (iii). Assume that $%
L>2\eta $. By similar argument in the proof of Lemma \ref{Le9}(iii), we
prove that $\mu _{L}(\alpha )$ is continuous on $[\theta _{\bar{\mu},\lambda
},\gamma _{\lambda })$. We omit the details. Assume that $\eta <L\leq 2\eta $%
. Since $I=\mathring{I}=(0,\gamma _{\lambda })$, and by Lemma \ref{Le10}%
(ii), $\mu _{L}(\alpha )$ is continuous on $(0,\gamma _{\lambda })$. Thus $%
\Sigma _{\lambda }=\left\{ \left( \mu _{L}(\alpha ),\alpha \right) :\alpha
\in I\right\} $ is continuous on the $\left( \lambda ,\left\Vert u_{\lambda
}\right\Vert _{\infty }\right) $-plane.

\smallskip \textbf{Step 6.} We prove the statement (iv). Let $\mu
_{7}=\limsup_{\alpha \rightarrow \gamma _{\lambda }^{-}}\mu _{L}(\alpha )$.
Since $\mu _{L}(\alpha )<\mu _{\lambda }$, we see that $\mu _{7}\in \lbrack
0,\infty )$. Suppose $\mu _{7}>0$. By Lemma \ref{Le1}, there exists $\mu
_{8}\in \left( 0,\mu _{7}\right) $ such that $\theta _{\mu _{8},\lambda
}<\gamma _{\lambda }<\beta _{\mu _{8},\lambda }$. There exists a sequence $%
\{\alpha _{n}\}\subset \left( \theta _{\mu _{8},\lambda },\gamma _{\lambda
}\right) $ such that%
\begin{equation*}
\alpha _{n}\nearrow \gamma _{\lambda }\text{ \ as }n\rightarrow \infty \text{%
, \ and \ }\mu _{L}(\alpha _{n})>\mu _{8}\text{ \ for }n\in \mathbb{N}\text{.%
}
\end{equation*}%
So by Lemma \ref{Le0} and (\ref{Le4b}), then%
\begin{equation*}
L=\lim_{n\rightarrow \infty }T_{\lambda _{L}(\alpha _{n}),\lambda }(\alpha
_{n})\geq \lim_{n\rightarrow \infty }T_{\mu _{8},\lambda }(\alpha
_{n})=T_{\mu _{8},\lambda }(\gamma _{\lambda })>T_{0,\lambda }(\gamma
_{\lambda })=L,
\end{equation*}%
which is a contradiction. Thus $\lim_{\alpha \rightarrow \gamma _{\lambda
}^{-}}\mu _{L}(\alpha )=0$. The proof is complete.

\smallskip

\textbf{Acknowledgment.} This work was supported by the National Science and
Technology Council, Taiwan, under Grant No. NSTC 113-2115-M-152-001.

\smallskip

\noindent \textbf{Declaration of generative AI and AI-assisted technologies
in the writing process}

During the preparation of this work the author used ChatGPT to improve the
clarity and readability of the manuscript. After using this tool, the author
reviewed and edited the content as needed and takes full responsibility for
the content of the published article.

\bigskip


\begin{thebibliography}{99}
\bibitem{Ali} J. Ali, R. Shivaji and K.Wampler, Population models with
diffusion, strong Allee effect and constant yield harvesting, J. Math. Anal.
Appl., 352(2009), 907--913.

\bibitem{Corsato} C. Corsato, Mathematical analysis of some differential
models involving the Euclidean or the Minkowski mean curvature operator, PhD
thesis, University of Trieste, 2015. Available at
https://www.openstarts.units.it/bitstream/10077/11127/1/PhD\_Thesis%
\_Corsato.pdf.

\bibitem{Girao} P. Gir\~{a}o and H. Tehrani, Positive solutions to logistic
type equations with harvesting, J. Differential Equations, 247(2009),
574--595.

\bibitem{Huang1} S.-Y. Huang, Classification and evolution of bifurcation
curves of semipositone problem with Minkowski curvature operator and its
applications, J. Differential Equations, 400(2024), 278--313.

\bibitem{Huang2} S.-Y. Huang, Bifurcation diagrams of positive solutions for
one-dimensional Minkowski curvature problem and its applications, Discrete
Contin. Dyn. Syst., 39(2019), 3443--3462.

\bibitem{Huang3} S.-Y. Huang and W.-H. Lee, Sufficient conditions for exact
bifurcation curves in Minkowski curvature problems and their applications,
accepted to appear in Electron. Res. Arch.

\bibitem{H1} S.-Y. Huang and P.-H. Hsieh, Exact multiplicity and bifurcation
curves of positive solutions of generalized logistic problems, Czechoslovak
Math. J., 73(2023), 1081--1098.

\bibitem{Huang4} S.-Y. Huang and S.-H. Wang, Proof of a Conjecture for the
One-dimensional Perturbed Gelfand Problem from Combustion Theory, Arch
Ration. Mech. Anal., 222(2016), 769--825.

\bibitem{Hung} K.-C. Hung, Y.-N. Suen and S.-H. Wang, Structures and
evolution of bifurcation diagrams for a one-dimensional diffusive
generalized logistic problem with constant yield harvesting, J. Differential
Equations, 269(2020), 3456--3488.

\bibitem{Hung2} K.-C. Hung and S.-H. Wang, Structures and evolution of
bifurcation diagrams for a one-dimensional diffusive generalized logistic
problem with constant yield harvesting. II. Convex-concave and
convex-concave-convex nonlinearities, J. Differential Equations, 308(2022),
1--39.

\bibitem{Hung3} K.-C. Hung and S.-H. Wang, Global bifurcation and exact
multiplicity of positive solutions for a positone problem with cubic
nonlinearity and their applications, Trans. Amer. Math. Soc., 365(2013),
1933--1956.

\bibitem{Jiang} J. Jiang, J. Shi, Bistability Dynamics in Some Structured
Ecological Models, in: S. Cantrell, C. Cosner, S.Ruan (Eds.), Spatial
Ecology, CRC Press, BocaRaton, 2009.

\bibitem{Myerscough} M. R. Myerscough, B. F. Gray, W. L. Hogarth and J.
Norbury, An analysis of an ordinary differential equations model for a two
species predator-prey system with harvesting and stocking, J. Math. Biol.,
30(1992), 389--411.

\bibitem{Oruganti} S. Oruganti, J. Shi and R. Shivaji, Diffusive logistic
equation with constant effort harvesting, I: Steady states, Trans. Amer.
Math.Soc., 354(2002), 3601--3619.

\bibitem{Oruganti2} S. Oruganti, J. Shi and R. Shivaji, Logistic equation
with the $p$-Laplacian and constant yield harvesting, Abstr. Appl. Anal.,
2004(2004), 723--727.

\bibitem{23} R. L. Wheeden and A. Zygmund, Measure and Integral: An
Introduction to Real Analysis, Marcel Dekker, New York, 1977.
\end{thebibliography}
\end{document}